\newtheorem{theorem}{Theorem}[section]
\newtheorem{corollary}{Corollary}[theorem]
\newtheorem{lemma}[theorem]{Lemma}
\theoremstyle{remark}
\theoremstyle{definition}
\newcommand{\overbar}[1]{\mkern 1.5mu\overline{\mkern-1.5mu#1\mkern-1.5mu}\mkern 1.5mu}
\newcommand{\E}{\mathbf{E}}
\newcommand{\tr}{\mathrm{tr}}
\numberwithin{equation}{section}
\title{Spiked Singular Values and Vectors \\ Under Extreme Aspect Ratios}
\author{Michael J. Feldman}
\begin{document}
\date{}
\maketitle

\begin{abstract}
The behavior of the leading singular values and vectors of noisy low-rank matrices is fundamental to many statistical and scientific problems. Theoretical understanding currently derives from asymptotic analysis under one of two regimes: (1) the {\it classical} regime, with a fixed number of rows and large number of columns, or vice versa, and (2) the {\it proportional} regime, with large numbers of rows and columns, proportional to one another. This paper is concerned with the {\it disproportional} regime, where the matrix is either ``tall and narrow'' or ``short and wide'': we study sequences of matrices of size $n \times m_n$ with aspect ratio $ n/m_n \rightarrow 0$ or $n/m_n \rightarrow \infty$ as $n \rightarrow \infty$. This regime has important ``big data'' applications.

Theory derived here shows that the displacement of the empirical singular values and vectors from their noise-free counterparts and the associated phase transitions---well-known under proportional growth asymptotics---still occur in the disproportionate setting. They must be quantified, however, on a novel scale of measurement that adjusts with the changing aspect ratio as the matrix size increases. 
In this setting, the top singular vectors corresponding to the longer of the two matrix dimensions are asymptotically uncorrelated with the noise-free signal.
\end{abstract}

\section{Introduction \label{sec:1}}

 The low-rank signal-plus-noise model is a simple statistical model of data with latent low-rank structure. Data $\widetilde X_n$ is the sum of a low-rank matrix and white noise:
\begin{align} \phantom{\, .} \mfrac{1}{\sqrt{m_n}} \widetilde{X}_n = \sum_{i=1}^r \theta_i u_i v_i^\top + \mfrac{1}{\sqrt{m_n}} X_n  \, , \label{model} \end{align}
where $\theta_i \in \mathbb{R}$ are signal strengths, $u_i \in \mathbb{R}^n$ and $v_i \in \mathbb{R}^{m_n}$ are the left and right signal vectors, and the noise matrix $X_n \in \mathbb{R}^{n \times m_n}$ contains independent and identically distributed (i.i.d.) entries with mean zero and variance one. The noise matrix is normalized so that rows are asymptotically unit norm. 

\subsection{Proportionate growth asymptotic}
Recent work studies this model in the high-dimensional setting where $n$ and $m_n$ are large, in particular, where $n$ and $m_n$ are of comparable magnitude. Such analyses derive limiting behavior of sequences $\widetilde X_n$ as 
\[ \phantom{\, ,} n, m_n \rightarrow \infty \, , \,\,\,\, \frac{n}{m_n} \rightarrow \beta > 0 \, .
\]
Here, the parameter $\beta > 0$ is the limiting aspect ratio of the data; thus for $\beta = 1$, the matrices are effectively square for all large $n$.  Baik, Ben Arous, and P\'ech\'e \cite{BBP}, Baik and Silverstein \cite{BkS04}, and Paul \cite{P07} study eigenvalues of the spiked covariance model, which is closely related to model (\ref{model}). 
Benaych-Georges and Nadakuditi \cite{BGN12} derive asymptotic properties of the  singular value decomposition of $\widetilde X_n$ (details of model (\ref{model}) are provided in Section \ref{sec1.3.0}). Two phenomena arise not present in classical fixed-$n$ asymptotics:

{\it Leading eigenvalue displacement}. Let $\tilde \lambda_1 \geq  \cdots \geq \tilde \lambda_n $ denote the eigenvalues of $\widetilde S_n = \frac{1}{m_n} \widetilde X_n \widetilde X_n^\top$, the sample covariance matrix. We assume signal strengths $\theta_1 > \cdots > \theta_r$ are constant and distinct. The leading eigenvalues of $\widetilde S_n$ are inconsistent estimators of the eigenvalues of $\E \widetilde S_n$: for fixed $i \geq 1$,  
\begin{align}
\phantom{\,,} \tilde \lambda_i & \xrightarrow{a.s.}  \begin{dcases} \frac{(1+ \theta_i^2)(\beta + \theta_i^2)}{\theta_i^2}  \,, &  i \leq r, \, \theta_i > \beta^{1/4} \\   (1 + \sqrt{\beta})^2 \,, & \text{otherwise} \end{dcases} \, .  \nonumber \end{align} 
The singular values of $\frac{1}{\sqrt{m_n}} \widetilde X_n$ are of course determined by the eigenvalues of $\widetilde S_n$, and vice versa. We will occasionally switch between the two without comment.

{\it Leading singular vector inconsistency}. Let  $\tilde u_1, \ldots, \tilde u_n$ and $\tilde v_1, \ldots, \tilde v_m$ denote the left and right singular vectors of  $ \widetilde X_n$. The leading singular vectors $\tilde u_1, \ldots, \tilde u_r$ and $\tilde v_1, \ldots, \tilde v_r$ are inconsistent estimators of the left and right signal vectors. For $1 \leq i \leq n$ and $1 \leq j \leq r$,

\begin{align*} & \hspace{3cm} | \langle \tilde u_i,  u_j \rangle |^2 \xrightarrow{a.s.}   1 - \frac{\beta (1+ \theta_i^2)}{\theta_i^2(\theta_i^2 + \beta)}  \,, \quad | \langle \tilde v_i, v_j \rangle |^2\xrightarrow{a.s.}1 - \frac{\beta+ \theta_i^2}{\theta_i^2(\theta_i^2 + 1)} \,,  && i = j, \, \theta_i > \beta^{1/4}  \, , \nonumber
\end{align*}
and $| \langle \tilde u_i,  u_j \rangle |, | \langle \tilde v_i,  v_j \rangle | \xrightarrow{a.s.} 0$ otherwise.

\subsection{Disproportionate growth asymptotic}

Contrary to the proportionate growth asymptotic, the dimensions of many large data matrices are not comparable. For example, Novembre et al.\ \cite{N08} demonstrate a genetic dataset with 3,000 rows (people) and 250,000 columns (genetic measurements) has low-rank structure. Here the aspect ratio is $\beta \approx 3/250$. 

This paper considers the signal-plus-noise model under disproportional growth:
 \[ \phantom{\,. }n, m_n \rightarrow \infty \, , \,\,\,\,  \beta_n = n/m_n \rightarrow 0 \, . \] Transposing $\widetilde X_n$ if necessary, our results also apply to $\beta_n \rightarrow \infty$.
Substitution of $\beta = 0$ into the above proportional-limit formulas heuristically suggests $\lambda_i \xrightarrow{a.s.} 1 + \theta_i^2$ and $ | \langle  \tilde  u_i, u_j \rangle |^2  \xrightarrow{a.s.}  1$ (right signal vectors are partially recovered). Indeed, these formulas are corollaries of Theorems 2.9 and 2.10 of \cite{BGN12}. Thus, under $\beta_n \rightarrow 0$ and constant signal strengths, leading eigenvalues of $\widetilde S_n$  consistently estimate those of $\E \widetilde S_n$, and left singular vectors $\tilde u_i$, corresponding to the shorter matrix dimension, consistently estimate left signal vectors $u_i$. In particular, leading eigenvalue displacement and (left) singular vector inconsistency no longer occur, and  seemingly no  phase transition exists.

Our main contribution is the discovery of a vanishingly small phase transition located at $\beta_n^{1/4}$. Signal strengths $\theta_i = \theta_{i,n}$, hitherto fixed, now vanish. For signal strengths  above this ``microscopic'' threshold, left singular vectors partially recover their signal counterparts---enabling signal estimation at signal-to-noise ratios previously thought insufficiently strong. Right singular vectors, corresponding to the longer matrix dimension, are asymptotically uncorrelated with the signal.   

We introduce a particular calibration ${\theta_n = \tau \cdot \beta_n^{1/4}}$ between signal strength $\theta_n$ and $\beta_n$ depending on a new (constant) formal parameter $\tau$. Under this calibration, we rigorously establish a new set of formulas for the limiting displacement of singular values and inconsistency of singular vectors, in explicit terms of the parameter $\tau$. These results may be heuristically derived by substitution of $\theta_n = \tau \cdot \beta_n^{1/4}$ into proportional formulas and taking limits. Many important applications previously made under proportional growth may in parallel fashion now be rigorously made under disproportional growth. For example, in the high-dimensional Gaussian mixture model with many more parameters $m$ than samples $n$, our theory gives a phase transition for recovery of length-$n$ signal vectors (encoding cluster membership). The optimal singular value hard-thresholding level for low-rank matrix recovery  as $\beta_n \rightarrow 0$ may be calculated, analogous to proportional results of Donoho and Gavish \cite{GD14}. Furthermore, there are new potential uses. Spiked tensors closely relate to the disproportionate asymptotic:  Montanari and Richard propose a tensor unfolding algorithm \cite{MR14} that produces noisy low-rank matrices of size $n \times n^k$, $k \geq 2$ (equivalently, $\beta_n = n^{-(k-1)})$. Precise asymptotic analysis of tensor unfolding is made possible by our results. 

The related spiked covariance model is studied in the disproportionate asymptotic by Bloemendal, Knowles, Yau, and Yin \cite{BKYY16}. An advantage of this work is that $\beta_n$ is permitted to vanish or diverge at any rate, while \cite{BKYY16} requires the existence of a constant $k > 0$ such that $n^{1/k} \leq m_n \leq n^k$. 

\subsection{Assumptions and notation}  \label{sec1.3.0}
We make the following assumptions:
\begin{itemize}
\item[] (A1.\ Noise) The entries of $X_n = (x_{ij}: 1 \leq i \leq n, 1 \leq j \leq m_n)$ are i.i.d.\ with $\E x_{11} = 0$, $\E x_{11}^2 = 1$, and $\E x_{11}^4 < \infty$.
    
\item[] (A2.\ Signal vectors) The signal rank $r$ is fixed. Let $u_i$ and $v_i$ be the $i$-th columns of $U_n \in \mathbb{R}^{n\times r}$ and $V_n \in \mathbb{R}^{m_n \times r}$, respectively. The entries of $\sqrt{n} U_n$ and $\sqrt{m_n}V_n$ are i.i.d.\ with mean zero, variance one, and finite eighth moment.

\item[] (A3.\ Signal Strength) The signal strengths $\theta_i = \theta_{i,n}$ obey
    \[  \theta_i = \tau_i \beta_n^{1/4}(1+\varepsilon_{i,n}) 
    \] for distinct constants $\tau_1 > \ldots > \tau_r \geq 0$ and (nonrandom) sequences $\varepsilon_{i,n} \rightarrow 0$. 
\end{itemize}
Alternatively, A1 and A2 may be replaced with
\begin{itemize}
    \item[] (B1.\ Noise) The entries of $X_n = (x_{ij}: 1 \leq i \leq n, 1 \leq j \leq m_n)$ are independent standard Gaussians.
    \item[] (B2.\ Signal vectors) The signal rank $r$ is fixed.  Let $u_i$ and $v_i$ be the $i$-th columns of $U_n \in \mathbb{R}^{n\times r}$ and $V_n \in \mathbb{R}^{m_n \times r}$, respectively. $U_n$ and $V_n$ have orthonormal columns: $U_n^\top U_n = V_n^\top V_n = I_r$. 
\end{itemize}

Henceforth, we supress the subscript $n$ of $m_n$.  Unless explicitly stated otherwise, all results and lemmas in this paper hold simultaneously under (A1, A2, A3) and (B1, B2, A3). These assumptions are similar to those of \cite{BGN12}. The primary difference is \cite{BGN12} permits anisotropic noise $X_n$, provided two key conditions are satisfied: (1) the empirical spectral distribution (ESD) of $S_n = m^{-1} X_n X_n^\top$ converges almost surely weakly to a deterministic, compactly supported measure $\mu$ and (2) the maximum eigenvalue of $S_n$ converges almost surely to the supremum of the support of $\mu$. This paper considers  isotropic noise for the following reasons: firstly, the extension from proportionate growth to  disproportionate growth and vanishing signals is most clear in the fundamental, isotropic case. Secondly, the anisotropic extension requires disproportionate analogs of (1) and (2), such as (3) convergence of the ESD of $\beta_n^{-1/2}(m^{-1}\Sigma_n^{1/2} X_n X_n^\top \Sigma_n^{1/2} - \Sigma_n)$ to a deterministic, compactly supported measure $\mu$ and (4) convergence of the maximum eigenvalue of this matrix to the supremum of the support of $\mu$. While (3) is established by Wang and Paul \cite{LW}, (4) is proven only in the isotropic case, by Chen and Pan \cite{CP12} (see Lemma \ref{lem:max_eig}).  Assumption A2 is a relaxation of assumption 2.4 of \cite{BGN12}, which requires sub-Gaussian moments of the entries of $\sqrt{n}U_n$ and $\sqrt{m_n}V_n$.  The assumption of distinct signal strengths (A3) is for simplicity.

In this paper, ``almost surely eventually'' means with probability one, a sequence of events indexed by $n$ occurs for all sufficiently large $n$. The notation      
 $a_n \lesssim b_n$ means eventually $a_n \leq C b_n$ for some universal constant $C$; $a_n \asymp b_n$ means $b_n \lesssim a_n \lesssim b_n$; and $a_n = O(b_n)$ means $|a_n| \lesssim b_n$. For a parameter $\ell$, $a_n \lesssim_\ell b_n$ means  $a_n \leq C(\ell) b_n$.

\subsection{Results} \label{sec1.3}

As stated above, we study vanishing signal strengths $\theta_i = \tau_i \beta_n^{1/4} (1 + \varepsilon_{i,n})$. The formal parameter $\tau$ describes the signal strength on a refined scale of analysis, with a phase transition occurring at $\tau = 1$. While for $\tau < 1$, both left and right singular vectors are asymptotically uncorrelated with the underlying signal vectors, for $\tau > 1$, left singular vectors correlate with left signal vectors. Define the following useful index:  
\[ \phantom{\,.} i_0= \max \{1 \leq i \leq r: \, \tau_i > 1 \} \, . \]

\begin{theorem} \label{thrm:eig_conv} Let $\widetilde X_n$ denote a sequence of signal-plus-noise models satisfying the above assumptions. As $n \rightarrow \infty$ and $\beta_n \rightarrow 0$, for any fixed $i \geq 1$, 
\begin{align}
&  \hspace{2cm} \frac{ \tilde \lambda_i - 1}{\sqrt{\beta_n}}  \xrightarrow{a.s.} \begin{dcases}  \tau_i^2 + \frac{1}{\tau_i^2}   &  i \leq i_0  \\
2 & i > i_0 \end{dcases} \label{1.2}  \, .
\end{align} 
For $1 \leq i, j \leq i_0$, 
\begin{align} & | \langle \tilde u_i, u_j \rangle |^2 \xrightarrow{a.s.}  \delta_{ij} \cdot (1 - \tau_i^{-4})  \,, && | \langle \tilde v_i, v_j \rangle |^2 \xrightarrow{a.s.} 0 \,. \label{qwerty} \end{align}  
\end{theorem}

Theorem \ref{thrm:eig_conv} is a consequence of the following stronger result: for $i \leq i_0$, define
\begin{align} \phantom{\,.} \bar \lambda_i = \frac{(1+\theta_i^2)(\beta_n + \theta_i^2)}{\theta_i^2} = 1 + (\tau_i^2 + \tau_i^{-2}) \sqrt{\beta_n} + \beta_n  \, . 
\end{align}

\begin{theorem} \label{thrm:cosine}
Adopt the setting of Theorem \ref{thrm:eig_conv}. For $1 \leq i, j \leq i_0$, $i \neq j$, and $\ell < 1/4$, almost surely,
\begin{align}   \phantom{\, .} |\tilde  \lambda_i - \bar \lambda_i| \lesssim (n^{-\ell}  + |\varepsilon_{i,n}| ) \sqrt{\beta_n} \, , \label{1.5}
\end{align}
\vspace{-.2in}
\begin{align}
   & | \langle \tilde u_i, u_i \rangle |^2 = 1 - \tau_i^{-4} + O(n^{-\ell/2} + |\varepsilon_{i,n}| + \sqrt{\beta_n}) \,, && | \langle \tilde u_i, u_j \rangle |^2 \lesssim n^{-2\ell} \,, \label{1.8} \\
   &| \langle \tilde v_i, v_i \rangle |^2 \lesssim \sqrt{\beta_n} \,, && |\langle \tilde v_i, v_j \rangle |^2 \lesssim n^{-2\ell} \sqrt{\beta_n} \,. \label{1.9}
\end{align}
\end{theorem}



\section{Preliminaries} 

\subsection{Overview} \label{005}

This section reviews the framework developed in Benaych-Georges and Nadakuditi's work \cite{BGN11, BGN12} addressing the $\beta_n \rightarrow \beta > 0 $, fixed signal setting, followed by a discussion of adaptations required to study the $\beta_n \rightarrow 0$, vanishing signal setting. 

As before, let $S_n = \frac{1}{m} X_n X_n^\top$ be the sample covariance matrix of noise. Let $F_n$  denote the ESD of $S_n$ and $\overbar F_n$ the Marchenko-Pastur law with parameter $\beta_n$. $s_n(z)$ and $\bar s_n(z)$ will denote the corresponding Stieltjes transforms, defined as follows:
 \begin{align} & s_n(z) = \int \frac{1}{\lambda - z} dF_n(\lambda) = \frac{1}{n} \tr(S_n - zI_n)^{-1} \,, \\ &
\bar s_n(z) = \int \frac{1}{\lambda - z} d \overbar F_n(\lambda) =  \frac{1 - \beta_n - z + \sqrt{(1 + \beta_n - z)^2 - 4 \beta_n }}{2\beta_n z} \,, \label{81237}
\end{align}
defined for $z$ outside the support of $F_n, \overbar F_n$. The square root is the principal branch.

 Let $\Theta_n = \text{diag}(\theta_1, \ldots, \theta_r)$. Recall that $U_n$ and $V_n$, defined in Section \ref{sec1.3.0}, contain as columns the left and right signal vectors. The following $2r \times 2r$ matrices will be the central objects of study, providing insight into the spectral decomposition of $\widetilde S_n$:
\begin{align} \widetilde{M}_n(z) & = \begin{bmatrix}  \sqrt{z} U_n^\top (S_n - z I_n)^{-1} U_n &  \frac{1}{\sqrt{m}}U_n^\top (S_n - z I_n)^{-1} X_n V_n + \Theta_n^{-1} \\  \frac{1}{\sqrt{m}} V_n^\top X_n^\top (S_n - z I_n)^{-1} U_n + \Theta_n^{-1} &  \sqrt{z} V_n^\top (\frac{1}{m} X_n^\top X_n - z I_m)^{-1} V_n \end{bmatrix} \,, \label{1time} \\
M_n(z) &  = \begin{bmatrix}  \sqrt{z} s_n(z) I_r & \Theta_n^{-1} \\ \Theta_n^{-1} & \big( \beta_n \sqrt{z} s_n(z) - \frac{1-\beta_n}{\sqrt{z}} \big) I_r \end{bmatrix}   \,, \label{1time2} \\
 \overbar{M}_n(z) & = \begin{bmatrix}  \sqrt{z} \bar s_n(z) I_r & \Theta_n^{-1} \\ \Theta_n^{-1} & \big( \beta_n \sqrt{z} \bar s_n(z) - \frac{1-\beta_n}{\sqrt{z}} \big) I_r \end{bmatrix}  \,. 
\end{align}
Pivotal to their utility is this essential observation:
\begin{lemma} \label{lem:4.1BGN} (Lemma 4.1 of \cite{BGN12}) 
The eigenvalues of $\widetilde{S}_n$ that are not eigenvalues of $S_n$ are the real values $z$ such $\widetilde{M}_n(z)$ is non-invertible. 
\end{lemma}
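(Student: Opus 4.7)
The plan is to reduce the eigenvalue equation for $\widetilde S_n$ to the vanishing of a $2r \times 2r$ determinant via the matrix-determinant lemma, and then identify that determinant with $\det \widetilde M_n(z)$ up to a determinant-one rescaling. My starting point is the observation that, since $\widetilde X_n = \sqrt{m_n}\, U_n \Theta_n V_m' + X_n$, direct expansion of $\widetilde S_n = \widetilde X_n \widetilde X_n'/m_n$ produces the rank-$2r$ decomposition
\[
\widetilde S_n - S_n \;=\; A J A', \qquad A = \bigl[\, U_n \,\mid\, \tfrac{1}{\sqrt{m_n}} X_n V_m \,\bigr] \in \mathbb{R}^{n \times 2r}, \qquad J = \begin{pmatrix} \Theta_n V_m' V_m \Theta_n & \Theta_n \\ \Theta_n & 0 \end{pmatrix}.
\]
Assuming $\Theta_n$ invertible (any zero-strength spike contributes trivially and may be dropped), $J$ is invertible with $\det(J) = (-1)^r \det(\Theta_n)^2$, and $J^{-1}$ equals the block matrix with $\Theta_n^{-1}$ off-diagonal, $0$ in the top-left, and $-V_m' V_m$ in the bottom-right.

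For $z$ outside the spectrum of $S_n$, I would then apply Sylvester's identity in the form
\[
\det(\widetilde S_n - zI_n) \;=\; \det(S_n - zI_n)\, \det(J)\, \det\!\bigl( J^{-1} + A'(S_n - zI_n)^{-1} A \bigr),
\]
so that $z$ is a non-$S_n$ eigenvalue of $\widetilde S_n$ if and only if the $2r \times 2r$ matrix $N(z) := J^{-1} + A'(S_n - zI_n)^{-1} A$ is singular. Block-expanding $N(z)$ using the explicit form of $J^{-1}$ immediately reproduces the off-diagonal blocks $\Theta_n^{-1} + \frac{1}{\sqrt{m_n}} U_n'(S_n - zI_n)^{-1} X_n V_m$ (and its transpose) of $\widetilde M_n(z)$, the top-left block $U_n'(S_n - zI_n)^{-1} U_n$, and a bottom-right block $\frac{1}{m_n} V_m' X_n'(S_n - zI_n)^{-1} X_n V_m - V_m' V_m$.

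To convert this bottom-right block into the form appearing in $\widetilde M_n(z)$, I would apply the push-through identity $\frac{1}{m_n} X_n'(S_n - zI_n)^{-1} X_n = I_m + z(T_n - zI_m)^{-1}$, where $T_n = \frac{1}{m_n} X_n' X_n$; this comes from the classical commutation $Y'(YY' - zI)^{-1} Y = I + z(Y'Y - zI)^{-1}$ applied to $Y = X_n/\sqrt{m_n}$. The $V_m' V_m$ terms then cancel and the block becomes $z\, V_m'(T_n - zI_m)^{-1} V_m$. Finally, the diagonal conjugation $\widetilde M_n(z) = D_L\, N(z)\, D_R$ with $D_L = \mathrm{diag}(\sqrt{z}\, I_r,\, I_r)$ and $D_R = \mathrm{diag}(I_r,\, z^{-1/2}\, I_r)$ satisfies $\det(D_L)\det(D_R) = 1$, so $\det\widetilde M_n(z) = \det N(z)$ and both vanish simultaneously. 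The argument is essentially routine linear algebra; the main bookkeeping task is choosing $A$ and $J$ so that $J^{-1}$ produces precisely the $\Theta_n^{-1}$ entries of $\widetilde M_n(z)$, and verifying that the $\sqrt{z}$ prefactors come from a determinant-one scaling.
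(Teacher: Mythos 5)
The paper does not prove this lemma---it is quoted directly as Lemma 4.1 of \cite{BGN12}---so there is no in-paper argument to compare against, but your self-contained derivation is correct. I checked the algebra at each step: expanding $\widetilde S_n = \frac{1}{m_n}\widetilde X_n\widetilde X_n'$ gives the rank-$2r$ update $\widetilde S_n - S_n = AJA'$ with your $A$ and $J$; the block inverse $J^{-1}$ with zero top-left, $\Theta_n^{-1}$ off-diagonal, and $-V_m'V_m$ bottom-right is right (for $J=\left[\begin{smallmatrix}B&C\\ C&0\end{smallmatrix}\right]$ with $C$ invertible, $J^{-1}=\left[\begin{smallmatrix}0&C^{-1}\\ C^{-1}&-C^{-1}BC^{-1}\end{smallmatrix}\right]$); Sylvester's identity reduces $\det(\widetilde S_n - zI_n)=0$ for $z\notin\mathrm{spec}(S_n)$ to $\det(J^{-1}+A'(S_n-zI_n)^{-1}A)=0$; the push-through identity $\frac{1}{m_n}X_n'(S_n-zI_n)^{-1}X_n = I_m + z\big(\frac{1}{m_n}X_n'X_n - zI_m\big)^{-1}$ cancels the $V_m'V_m$ term and produces the companion resolvent; and the conjugation by $D_L=\mathrm{diag}(\sqrt z\,I_r, I_r)$, $D_R=\mathrm{diag}(I_r, z^{-1/2}I_r)$ has determinant one and carries $N(z)$ to $\widetilde M_n(z)$.

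Your route differs from the one in \cite{BGN12} in where the linear algebra lives. There the same determinant lemma is applied to the $(n+m)\times(n+m)$ Hermitization $\left[\begin{smallmatrix}0 & \frac{1}{\sqrt m}\widetilde X_n\\ \frac{1}{\sqrt m}\widetilde X_n' & 0\end{smallmatrix}\right]$, viewed as a rank-$2r$ perturbation of the bordered noise matrix; the block structure of the bordered resolvent at the point $\sqrt z$ then produces both $(S_n-zI_n)^{-1}$ and $\big(\frac{1}{m}X_n'X_n-zI_m\big)^{-1}$ and the $\sqrt z$ prefactors in one stroke, with no push-through or determinant-one rescaling. You stay at dimension $n\times n$ throughout, which is slightly more compact, at the price of needing those two extra identities. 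The conclusions are identical. Two details worth stating explicitly in a polished write-up: taking $\sqrt z$ tacitly assumes $z>0$, which is harmless since $\widetilde S_n\succeq 0$ and the eigenvalues in question are strictly positive; and the remark about dropping zero-strength spikes is correct, since a column with $\theta_i=0$ contributes nothing to $\widetilde X_n$ and so can be removed from $U_n,\Theta_n,V_m$ without changing $\widetilde S_n$.
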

Lemma \ref{lem:4.1BGN} focuses the study of the leading eigenvalues of $\widetilde S_n$---those resulting from the signal component of $\widetilde X_n$---away from the {\it increasing}-dimensional sequence $\widetilde S_n$ to the {\it fixed}-dimensional sequence $\widetilde M_n$ and its associates, $M_n$ and $\overbar M_n$. The roots of $\det \widetilde M_n(z)$ will be shown to localize near the roots of $\det \overbar M_n(z)$. Using the closed form (\ref{81237}) of the Stieltjes transform $\bar s_n$, $\det \overbar M_n$ has explicitly known roots. As \cite{BGN12} shows, these roots are precisely $\bar \lambda_1, \ldots, \bar \lambda_{i_0}$, introduced in Section \ref{sec1.3} (assuming $\varepsilon_{i,n} = 0$):
\begin{lemma} (Section 3.1 of \cite{BGN12}) \label{lem:Dbar} Define
\begin{align} \phantom{\,.} 
 \overbar{D}_n(z) &=  \sqrt{z} \bar s_n(z) \Big( \beta_n \sqrt{z}  \bar s_n(z) - \frac{1-\beta_n}{\sqrt{z}}\Big)  = -\frac{1+\beta_n - z + \sqrt{(1+\beta_n-z)^2 - 4 \beta_n}}{2 \beta_n} \,. \label{696}  
\end{align}
 As the blocks of $\overbar M_n(z)$ commute,
\begin{align} \phantom{\,.}
 \det \overbar{M}_n(z) = \det(\overbar D_n(z) I_r - \Theta_n^{-2})
 = \prod_{i=1}^r ( \overbar{D}_n(z) - \theta_i^{-2}) \, . \label{7777}
\end{align}
On the real axis, $\overbar D_n(z)$ maps $ \big( (1+\sqrt{\beta_n})^2, \infty)$ in a one-to-one fashion onto $(0, \beta_n^{-1/2})$. The compositional inverse $\overbar D_n^{(-1)}(t)$ exists on the real interval $(0, \beta_n^{-1/2})$ and is given in closed form by 
 \begin{align}
 \phantom{\,. }  & \overbar D_n^{(-1)}(t)   = \frac{(t+1)(\beta_n t +1)}{t} \,, & 0 < t < \beta_n^{-1/2} \,.
 \end{align}
 Thus, assuming $\varepsilon_{i,n} = 0$, $\bar \lambda_i = \overbar D_n^{(-1)}(\theta_i^{-2})$ is a root of $\det \overbar M_n(z)$ provided that $\theta_i^{-2} < \beta_n^{-1/2}$, or $i \leq i_0$.
\end{lemma}

The singular vectors of $\widetilde X_n$ are related to $U_n$ and $V_n$ via the next lemma. 

\begin{lemma}\label{lem: BG5.1}(Lemma 5.1 of \cite{BGN12})
Let $\tilde \sigma  = \sqrt{ \tilde \lambda}$ be a singular value of $\frac{1}{\sqrt{m}} \widetilde X_n$ that is not a singular value of $\frac{1}{\sqrt{m}} X_n$ and let $\tilde u, \tilde v$ be the corresponding  singular vectors. Then the column vector \begin{align}  \begin{bmatrix} \Theta_n V_n^\top \tilde v \\ \Theta_n U_n^\top \tilde u \end{bmatrix}    \nonumber \end{align}
belongs to the kernel of $\widetilde M_n(\tilde \lambda)$. Moreover, for $P_n = \sum_{i=1}^r \theta_i u_i v_i^\top$,
\begin{align}
   \phantom{\,.} 1 & =    \tilde \lambda \tilde v^\top P_n^\top ( S_n - \tilde \lambda I_n)^{-2}  P_n \tilde v + \frac{1}{m}\tilde u^\top P_n X_n^\top (S_n - \tilde \lambda I_n)^{-2} X_n P_n^\top \tilde u \nonumber \\  & \,\,\,\,\,\, +   \frac{\tilde \sigma}{\sqrt{m}} \tilde u^\top P_n X_n^\top ( S_n - \tilde \lambda I_n)^{-2} P_n \tilde v +  \frac{\tilde \sigma}{\sqrt{m}} \tilde v^\top P_n^\top ( S_n - \tilde \lambda I_n)^{-2} X_n P_n^\top \tilde u  \label{765} \,.
\end{align}

\end{lemma}

The results quoted above lie at the heart of the strategy  of \cite{BGN11, BGN12} for proportionate growth and fixed signals. That strategy is able to employ convenient convergence arguments. Indeed, as $\beta_n \rightarrow \beta > 0$, almost surely, the ESD of noise eigenvalues $F_n$ converges weakly to $\overbar F_\beta$,  the Marchenko-Pastur law with parameter $\beta$. Moreover, the top noise eigenvalue $\lambda_1 \xrightarrow{a.s.} (1+\sqrt{\beta})^2$, the upper edge of the bulk (the support of $\overbar F_\beta$). Let $[a,b]$ be a compact interval outside the bulk: $(1+\sqrt{\beta})^2 < a$. By the Arzela-Ascoli theorem, the Stieltjes transform of the noise eigenvalues $s_n(z)$ converges uniformly on $[a,b]$ to $\bar s_\beta(z)$, the Stieltjes transform of $\overbar F_\beta$. The implications of this key observation include the uniform convergence of $M_n(z)$ to a non-random matrix $\overbar M_\beta(z)$ (the fixed-$\beta$ analog of $\overbar M_n(z)$). From the uniform convergence of $M_n(z)$, convergence of the roots of $\det M_n(z)$ to those of $\det \overbar M_\beta(z)$ is established.

Now, consider the disproportionate asymptotic. Leading eigenvalue displacement and singular vector inconsistency no longer occur under fixed signals. Rather, the regime of interest is one where  signal strengths vanish as $\beta_n^{1/4}$, mandating study of $M_n(z)$ and  $\overbar M_n(z)$ very near one. The convergences which were previously so helpful under $\beta_n \rightarrow \beta > 0$ and fixed signals are not useful as $\beta_n \rightarrow 0$. The limits of $F_n$ and $\overbar M_n(z)$ are degenerate, as all noise eigenvalues converge to one. The upper edge of the support of $F_n$ lies approximately at $(1+\sqrt{\beta_n})^2$. Theorem \ref{thrm:eig_conv} tells us the leading eigenvalues of $\widetilde S_n$ may still emerge from the bulk, but only by a multiple of $\sqrt{\beta_n}$. Phenomena of interest are no longer made visible by taking limits, but must be captured by detailed finite-$n$ analysis. This will require bounds on the convergence rate of $s_n(z)$ to $\bar s_n(z)$, developed in Section \ref{sec:b}, as well as careful tracking of the precise size of remainders, where previously it sufficed to know that such remainders tended to zero. The proofs of Theorems \ref{thrm:eig_conv} and \ref{thrm:cosine} are refinements of the proofs of Theorems 2.9 and 2.10 of \cite{BGN12} and Lemma 6.1 of \cite{BGN11}. 

\subsection{Preliminary lemmas} \label{sec:2.2}


This section contains lemmas used in the proof of Theorem \ref{thrm:cosine}. Let $\lambda_1 \geq \lambda_2 \geq \cdots \geq \lambda_n $ denote the eigenvalues of $S_n = \frac{1}{m} X_n X_n^\top$.

\begin{lemma} \label{asdf1}
    Let $Y_n$ be the matrix obtained by entrywise truncation and normalization of $X_n$ as in Lemma \ref{lem:max_eig2} and $\widetilde Y_n = \sqrt{m} \sum_{i=1}^r \theta_i u_i v_i^\top + Y_n$.     For $1 \leq i \leq n$, almost surely,
    \begin{align*} \phantom{\,.}
        | \tilde \lambda_i - \lambda_i(m^{-1} \widetilde Y_n \widetilde Y_n^\top) | \lesssim \frac{\sqrt{\beta_n}}{n} \, . 
    \end{align*}
Moreover, if $|\lambda_{i}(m^{-1} \widetilde Y_n \widetilde Y_n^\top) - \lambda_{i+1}(m^{-1} \widetilde Y_n \widetilde Y_n^\top)| \asymp \sqrt{\beta_n}$  for $1 \leq i, j \leq i_0$,
\begin{align*}
    & |\langle \tilde u_i, u_{j} \rangle |^2 =  |\langle \tilde u_{Y,i}, u_{j} \rangle | ^2 + O_{a.s.}(n^{-1}) \, , &|\langle \tilde v_i, v_{j} \rangle | ^2 =  |\langle \tilde v_{Y,i}, v_{j} \rangle |^2 + O_{a.s.}(n^{-1}) \, ,
\end{align*}
where $\tilde u_{Y,1}, \ldots \tilde u_{Y,i_0}$ and $\tilde v_{Y,1}, \ldots, \tilde v_{Y,i_0}$ are the leading left and right singular vectors of $\widetilde Y_n$, respectively. 
\end{lemma}
\begin{proof}
    This follows from Lemma \ref{lem:trunc}. 
\end{proof}
\noindent  By Lemma \ref{asdf1},  without loss of generality, we henceforth assume the entries of $X_n$ are truncated and normalized in accordance with  Lemma \ref{lem:max_eig2}. Comparing Lemma \ref{asdf1} and Theorem \ref{thrm:cosine}, the errors induced by truncation and normalization are negligible; it therefore suffices to prove Theorem \ref{thrm:cosine} in this modified setting. 
By Lemma \ref{lem:max_eig2}, $\bar \lambda_1, \ldots, \bar \lambda_{i_0}$ emerge from the ESD of $S_n$ with high probability, a fact we shall use repeatedly: for any $\eta, \ell > 0$, 
\begin{align} \phantom{\,.}\Pr(  \lambda_1 \geq 1 + (2+\eta/2) \sqrt{\beta_n}) =  o(n^{-\ell}) \, .  \label{111}
\end{align}





The following lemma bounds $s_n(z)$ and $\bar s_n(z)$ on a region containing $\bar \lambda_1, \ldots, \bar \lambda_{i_0}$.  The first point is a standard result. See, for example, Lemma $8.17$ of \cite{BS_SpAn}. The second point relies on (\ref{111}). 
\begin{lemma} \label{lem:s_n bound} Let $\eta > 0$ and $\mathcal{Z}_{\eta, n} = \{z: \Re(z)\geq 1 + (2+ \eta) \sqrt{\beta_n}\}$. Then,
\begin{align*}
\phantom{\,.} & \sup_{z \in \mathcal{Z}_{\eta, n}}\, |\bar s_n(z)| \lesssim \beta_n^{-1/2} \,, && \sup_{z \in \mathcal{Z}_{\eta, n}}\,  \Big| \frac{d}{dz} \bar s_n(z) \Big| \lesssim \beta_n^{-1}  \, .
\end{align*}
Furthermore, almost surely, 
\begin{align*}
\phantom{\,.} & \sup_{z \in \mathcal{Z}_{\eta, n}}\,  |s_n(z)| \lesssim \beta_n^{-1/2}\,, && \sup_{z \in \mathcal{Z}_{\eta, n}}\,  \Big| \frac{d}{dz} s_n(z) \Big| \lesssim  \beta_n^{-1}  \,.
\end{align*}
The implied coefficients depend on $\eta$ only.
\end{lemma}
\begin{proof}
As the upper edge of the support of the Marchenko-Pasteur law lies at $(1+\sqrt{\beta_n})^2$,
\begin{align}
&  \phantom{\,.}  \inf_{z \in \mathcal{Z}_{\eta, n}} \inf_{\lambda \in \text{supp} \overbar F_n} |\lambda - z| \gtrsim \sqrt{\beta_n} \,,  && \inf_{z \in \mathcal{Z}_{\eta, n}} \inf_{\lambda \in \text{supp} \overbar F_n} |\lambda - z|^2 \gtrsim  \beta_n \,. \label{1000}
\end{align}
The first claims of the lemma now follow from the integral representations
\begin{align*}
 &  \phantom{\,.} \bar s_n(z) = \int \frac{1}{\lambda - z} d \overbar F_n(\lambda) \,, && \frac{d}{dz} \bar s_n(z) = -\int \frac{1}{(\lambda - z)^2} d \overbar F_n(\lambda) \,.
\end{align*}
By (\ref{111}), almost surely eventually, 
\begin{align}
&  \phantom{\,.}  \inf_{z \in \mathcal{Z}_{\eta, n}} \min_{1 \leq \alpha \leq n} |\lambda_\alpha - z| \geq \frac{\eta \sqrt{\beta_n}}{2} \,,  && \inf_{z \in \mathcal{Z}_{\eta, n}} \min_{1 \leq \alpha \leq n} |\lambda_\alpha - z|^2 \geq \frac{\eta^2 \beta_n}{4} \,. \label{419}
\end{align}
Together with 
\begin{align*}
\phantom{\,.} & s_n(z) = \frac{1}{n} \sum_{\alpha=1}^n \frac{1}{\lambda_\alpha - z} \,, && \frac{d}{dz} s_n(z) = -\frac{1}{n} \sum_{\alpha=1}^n \frac{1}{(\lambda_\alpha - z)^2}  \,,
\end{align*}
we obtain the second group of claims. 
\end{proof}
The follow lemmas bound the deviation of $\widetilde M_n(z)$ (defined in (\ref{1time})) and its entrywise derivative $\frac{d}{dz} \widetilde M_n(z)$ from $M_n(z)$ (defined in (\ref{1time2})) and   $\frac{d}{dz} \widetilde M_n(z)$, respectively. Under assumption A2, $M_n(z)$ and $\frac{d}{dz} M_n(z)$ are precisely the  entrywise conditional expectations of $\widetilde M_n(z)$ and $\frac{d}{dz} \widetilde M_n(z)$ given $X_n$.
\begin{lemma} \label{lem123} Let $a_n \geq 1+(2+\eta)\sqrt{\beta_n}$ be a bounded sequence. For any  $\ell < 1/4$ and $1 \leq i,j \leq r$, almost surely, 
\begin{align}  &  \sup_{ |z- a_n| \leq n^{-1/4}\sqrt{\beta_n}} \,  \big | ( \widetilde M_n(z) - M_n(z)  )_{i,j} \big|  \lesssim  n^{-\ell} \beta_n^{-1/2}   \,,  \label{411}\\  &
  \sup_{ |z - a_n| \leq n^{-1/4}\sqrt{\beta_n}} \,  \big | ( \widetilde M_n(z) - M_n(z)  )_{i,j+r} \big|  \lesssim  n^{-\ell}  \,,  \label{410} \\ &
  \sup_{ |z - a_n| \leq n^{-1/4}\sqrt{\beta_n}} \,  \big | (  \widetilde M_n(z) - M_n(z) )_{i+r,j+r} \big|  \lesssim n^{-\ell} \sqrt{\beta_n} \,. \label{413}
\end{align}
\end{lemma}
\begin{proof} 
The argument is presented under assumptions (A1, A2, A3) for the diagonal of $\widetilde M_n(z) - M_n(z)$ only; the off-diagonal argument or argument assuming (B1, B2, A3) is similar and omitted. Consider $z$ satisfying $|z-a_n| \leq n^{-1/4} \sqrt{\beta_n}$ and $z' = a_n + i n^{-1/4} \sqrt{\beta_n}$.  We will decompose $\widetilde M_n(z) - M_n(z)$ as
\begin{align*}
\widetilde M_n(z) - M_n(z) &= \big( \widetilde M_n(z) - \widetilde M_n(z') \big) + \big( \widetilde M_n(z') - M_n(z') \big) +  \big(M_n(z') - M_n(z) \big) 
\end{align*}
and bound (the diagonal entries of) each term. 

\medskip
{\it Relation (\ref{411}): the upper left block}. For $1 \leq \alpha \leq n$, by the almost-sure eventual spacing bound (\ref{419}) and $|z-z'| \leq 2 n^{-1/4} \sqrt{\beta_n}$, 
\begin{align} \phantom{\,.}
   \bigg| \frac{1}{\lambda_\alpha - z} - \frac{1}{\lambda_\alpha - z'}  \bigg| =  \frac{|z-z'|}{| \lambda_\alpha - z||\lambda_\alpha - z'|} \leq \frac{8}{\eta^2 n^{1/4} \sqrt{\beta_n}} \, . \label{13579}
\end{align}
Hence, almost surely,
\begin{align}
    \Big|  \Big( \mfrac{1}{\sqrt{z}} \widetilde M_n(z) - \mfrac{1}{\sqrt{z'}} \widetilde M_n(z') \Big)_{i,i} \Big|& = \big| u_i^\top (S_n - z I_n)^{-1} u_i - u_i(S_n - z' I_n)^{-1} u_i\big|  \nonumber \\
    & \leq \|u_i\|_2^2 \, \big \| (S_n - z I_n)^{-1} - (S_n - z' I_n)^{-1} \big\|_2 \nonumber \\
     & = \|u_i\|_2^2 \bigg| \frac{1}{\lambda_1 - z} - \frac{1}{\lambda_1 - z'}  \bigg|   \lesssim \frac{1}{n^{1/4} \sqrt{\beta_n}} \, ,
\label{433}      \\
\Big|  \Big( \mfrac{1}{\sqrt{z}} M_n(z) - \mfrac{1}{\sqrt{z'}}  M_n(z') \Big)_{i,i} \Big| & = |s_n(z) - s_n(z')|  \leq \frac{1}{n} \sum_{\alpha=1}^n \bigg| \frac{1}{\lambda_\alpha - z} - \frac{1}{\lambda_\alpha - z'} \bigg| \lesssim \frac{1}{n^{1/4} \sqrt{\beta_n}} \label{002} \,.
\end{align}
Next, consider $\widetilde M_n(z') - M_n(z')$. Applying Lemma \ref{lem:xAx-tr} conditional on $X_n$, 
\begin{align} 
   \phantom{\,,} \E | u_i^\top (S_n - z' I_n)^{-1} u_i - s_n(z') |^4  \lesssim & \, \frac{1}{n^{4}} \E   | \tr (S_n - z'I_n)^{-1} (S_n - \bar z'I_n)^{-1} |^2 \nonumber\\ & + \frac{1}{n^{4}} \E \tr \big( (S_n - z'I_n)^{-1} (S_n - \bar z'I_n)^{-1} \big)^2   \nonumber \\
    \leq &\,  \frac{2}{n^{4}}\E \bigg(\sum_{\alpha=1}^n \frac{1}{|\lambda_\alpha - z'|^2} \bigg)^2 \, \nonumber  . 
\end{align}
By (\ref{111}), $|\lambda_\alpha - z'| \geq \eta \sqrt{\beta_n}/2$ with probability at least $1-o(n^{-1})$, while with probability one, $|\lambda_\alpha - z'| \geq \Im(z')$:
\begin{align*} 
   \phantom{\,,} \E | u_i^\top (S_n - z' I_n)^{-1} u_i - s_n(z') |^4  \lesssim & \, 
   \frac{1}{n^2} \bigg( \frac{1}{\beta_n^2} + \frac{1}{n\Im(z')^4} \bigg) \,.
\end{align*}
This yields the tail probability bound 
\begin{align} \text{Pr} \big( |u_i^\top (S_n - z' I_n)^{-1} u_i - s_n(z')| \geq n^{-\ell}\beta_n^{-1/2} \big) & \leq n^{4\ell } \beta_n^2 \E | u_i^\top (S_n - z' I_n)^{-1} u_i - s_n(z') |^{4}  \lesssim \frac{1}{n^{2-4\ell}}  \, , \nonumber
\end{align}
which is summable. Thus, by the Borel-Cantelli lemma, 
\begin{align} \phantom{\,,}
    \Big| \mfrac{1}{\sqrt{z'}}  \big( \widetilde M_n(z') - M_n(z') \big)_{i,i} \Big| & \lesssim \frac{1}{n^\ell \sqrt{\beta_n}} \, ,  \label{439}
\end{align}
almost surely. (\ref{411}) follows from (\ref{433}) - (\ref{439}), $|\sqrt{z'/z} | \leq 2$, and $|\sqrt{z'} - \sqrt{z}| \asymp n^{-1/4} \sqrt{\beta_n}$.

{\it Relation (\ref{413}): the lower right block}. Let $ \Breve{S}_n = \frac{1}{m} X_n^\top X_n$ denote the ``companion'' matrix to $S_n$, so-called because the eigenvalues of $\Breve{S}_n$ are the $n$ (possibly repeated) eigenvalues of $S_n$ joined by a zero eigenvalue with multiplicity $m-n$. Let $W$ be an orthogonal matrix such that $W^\top \Breve{S}_n W$ is diagonal. By Lemma \ref{lem:6.8}, we have an almost-sure bound on the magnitude of the component of $v_i$ orthogonal to the nullspace of $\Breve{S}_n$:
\begin{align} \phantom{\, .}   \|(W^\top v_i)_{1:n}\|_2^2 \lesssim \beta_n \log(n)  \,. \label{98765432}
\end{align}
Consider first $\widetilde M_n(z) - \widetilde M_n(z')$.
\begin{align}
  \Big|    \Big( \mfrac{1}{\sqrt{z}} \widetilde M_n(z) - \mfrac{1}{\sqrt{z'}} \widetilde M_n(z') \Big)_{i+r,i+r} \Big| & = \big |v_i^\top \big( (\Breve{S}_n - z I_m)^{-1} - (\Breve{S}_n- z' I_m)^{-1} \big) v_i \big|  \nonumber \\
 & \leq   \sum_{\alpha=1}^n (W^\top v_i)_\alpha^2 \bigg| \frac{1}{\lambda_\alpha - z} - \frac{1}{\lambda_\alpha - z'} \bigg|  + \sum_{\alpha=n+1}^m (W^\top v_i)^2_\alpha \bigg| \frac{1}{z} - \frac{1}{z'} \bigg| \nonumber 
    \\  & \leq \|(W^\top v_i)_{1:n}\|_2^2 \bigg| \frac{1}{\lambda_1 - z} - \frac{1}{\lambda_1 - z'} \bigg| + \|(W^\top v_i)_{n+1:m}\|_2^2\bigg| \frac{1}{z} - \frac{1}{z'}\bigg| \nonumber \, . 
\end{align}
The first term above is bounded using  (\ref{13579}) and (\ref{98765432}) and the second using $|1/z-1/z'| \leq 2 n^{-1/4} \sqrt{\beta_n}$: almost surely,
\begin{align}
    \big| \mfrac{1}{\sqrt{z}}  \big( \widetilde M_n(z) - \widetilde M_n(z') \big)_{i+r,i+r} \big| \lesssim \frac{\sqrt{\beta_n}}{n^{\ell}} \, .     \label{9599}
\end{align}
Next, consider $M_n(z) - M_n(z')$. By $(\ref{002})$,  
\begin{align}
  \Big|   \Big(  \mfrac{1}{\sqrt{z}} M_n(z) -   \mfrac{1}{\sqrt{z'}} M_n(z') \Big)_{i+r,i+r} \Big| & =\bigg| \beta_n s_n(z) -\frac{1-\beta_n}{z} - \beta_n s_n(z') +\frac{1-\beta_n}{z'} \bigg| \nonumber \\
   & \leq \beta_n | s_n(z) - s_n(z')| + (1-\beta_n) \bigg| \frac{1}{z} - \frac{1}{z'}\bigg|\lesssim \frac{ \sqrt{\beta_n}}{n^{1/4}} \,. \label{435} 
\end{align}
$\widetilde M_n(z') - M_n(z')$ is bounded similarly to (\ref{439}): applying Lemma \ref{lem:xAx-tr} conditional on $X_n$ and (\ref{111}), 
\begin{align*}
   \E \Big| v_i^\top (\Breve{S}_n - z' I_m)^{-1} v_i - \beta_n s_n(z') + \frac{1 - \beta_n}{z'} \Big|^{4} & \lesssim   \frac{1}{m^{4}} \E  \big| \tr (\Breve{S}_n - z' I_m)^{-1} (\Breve{S}_n - \bar z' I_m)^{-1} \big|^2\\
    &= \frac{1}{m^{4}} \E \bigg( \sum_{\alpha=1}^n \frac{1}{|\lambda_\alpha - z'|^2} + \frac{m-n}{|z'|^2}\bigg)^2  \\
   & \lesssim\frac{1}{m^{4}} \Big( \frac{n}{\beta_n} + m \Big)^2  \,, 
\end{align*}
 Thus,
\begin{align} \text{Pr} \Big(  \Big|  \mfrac{1}{\sqrt{z'}}  (\widetilde M_n(z') - M_n(z') )_{i+r,i+r} \Big| \geq n^{-\ell}\sqrt{\beta_n} \Big) \lesssim \frac{n^{4\ell}}{m^2\beta_n^2} \leq \frac{1}{n^{2-4\ell}}  \,.
\label{437}
\end{align}
Summability of the right-hand side, together with (\ref{9599}) and (\ref{435}), yields (\ref{413}).  The analysis of the off-diagonal entries of $\widetilde M_n(z)  - M_n(z)$ uses Lemma \ref{lem:xAy} rather than \ref{lem:xAx-tr}. Under assumptions B1, B2, the proof uses that the left and right singular vectors of $X_n$ are independent and each Haar-distributed.   
\end{proof}

\begin{lemma} \label{lem:M2} Let $a_n \geq 1+(2+\eta)\sqrt{\beta_n}$ be a bounded sequence. For any  $\ell < 1/4$ and $1 \leq i,j \leq r$, almost surely,  
\begin{align}  &  \sup_{ |z - a_n| \leq n^{-1/4}\sqrt{\beta_n} } \,  \Big | \frac{d}{dz} ( \widetilde M_n(z) - M_n(z)  )_{i,j} \Big|  \lesssim  n^{-\ell} \beta_n^{-1}   \,, \label{222} \\ &
  \sup_{ |z - a_n| \leq n^{-1/4}\sqrt{\beta_n}} \,  \Big | \frac{d}{dz}  (\widetilde M_n(z) - M_n(z)  )_{i,j+r} \Big|  \lesssim  n^{-\ell} \beta_n^{-1/2} \,, \label{223} \\ &
  \sup_{ |z - a_n| \leq n^{-1/4}\sqrt{\beta_n} } \,  \Big | \frac{d}{dz}  (  \widetilde M_n(z) - M_n(z) )_{i+r,j+r} \Big|  \lesssim n^{-\ell} \,.
\end{align}
Furthermore, for $i \neq j$, almost surely,
\begin{align}
       &
  \sup_{ |z - a_n| \leq n^{-1/4}\sqrt{\beta_n}} \,  \frac{1}{m}  \Big|  v_i^\top X_n^\top (S_n - z I_n)^{-2} X_n v_i -  \tr (S_n - z I_n)^{-2} S_n \Big| \lesssim n^{-\ell}  \,, \label{225} \\
   &
  \sup_{ |z - a_n| \leq n^{-1/4}\sqrt{\beta_n}} \, \frac{1}{m}  |  v_i^\top X_n^\top (S_n - z I_n)^{-2} X_n v_j | \lesssim n^{-\ell}   \,. \label{226}
\end{align}

\end{lemma}
\begin{proof}
The proof is similar to that of Lemma \ref{lem123} and is omitted. 
\end{proof}

\section{Proof of singular value results: (\ref{1.2}) and (\ref{1.5})}
The following lemmas, which we shall use to bound $|\det \widetilde M_n(z) - \det \overbar M_n(z)|$ and $\big| \frac{d}{dz} (\det \widetilde M_n(z) - \det \overbar M_n(z) ) \big|$ in the vicinities of $\bar \lambda_1, \ldots, \bar \lambda_{i_0}$, are critical to the proof of the theorem.  
\begin{lemma} \label{lem:detM-detM} Let $a_n \geq 1+(2+\eta)\sqrt{\beta_n}$ be a bounded sequence. For any $\ell < 1/4$, almost surely,
\begin{align}
\phantom{\,.}
   \sup_{|z-a_n| \leq n^{-1/4}\sqrt{\beta_n}} | \det \widetilde M_n(z) - \det \overbar M_n(z) | &\lesssim   n^{-\ell} \beta_n^{-r/2} \,. \label{333}
\end{align}
\end{lemma}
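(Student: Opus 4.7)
The plan is to decompose
$\widetilde M_n(z) - \overbar M_n(z) = (\widetilde M_n(z) - M_n(z)) + (M_n(z) - \overbar M_n(z))$
and assemble entry-wise bounds on $H := \widetilde M_n - \overbar M_n$. Lemma \ref{lem:~M} bounds the three blocks of $\widetilde M_n - M_n$ in sup-norm by $n^{-\ell}\beta_n^{-1/2}$ (UL), $n^{-\ell}$ (UR and LL, by symmetry of $\widetilde M_n$), and $n^{-\ell}\sqrt{\beta_n}$ (LR), uniformly over $|z-a_n|\leq n^{-1/4}\sqrt{\beta_n}$. The second piece $M_n - \overbar M_n$ is block-diagonal with UL entries $\sqrt{z}(s_n - \bar s_n)$ and LR entries $\beta_n \sqrt{z}(s_n - \bar s_n)$; since $\sqrt{z} \asymp 1$ on the region of interest, Theorem \ref{thrm:s-s0} bounds these by $n^{-\ell}\beta_n^{-1/2}$ and $n^{-\ell}\sqrt{\beta_n}$ respectively, with the same uniformity. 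Hence the entries of $H$ obey the same block-wise bounds as those of $\widetilde M_n - M_n$.

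A direct multilinear expansion of $\det(\overbar M_n + H) - \det \overbar M_n$ followed by Hadamard's inequality turns out to be too loose by a power of $\beta_n$, because the nonzero entries of $\overbar M_n$ span three magnitudes: $\sqrt{z}\bar s_n \asymp \beta_n^{-1/2}$ on the UL diagonal, $\theta_i^{-1} \asymp \beta_n^{-1/4}$ on the off-diagonal-block diagonals, and the LR diagonal $\asymp 1$. I remedy this with a two-sided diagonal rescaling $R = S = \mathrm{diag}(\beta_n^{1/4} I_r,\, I_r)$. A direct block-wise check shows that every entry of $\overbar A := R \overbar M_n S$ is $O(1)$ and every entry of $K := R H S$ is $O(n^{-\ell})$: the UL block of $K$ is $\beta_n^{1/2} H_{UL} \lesssim n^{-\ell}$, the UR and LL blocks are $\beta_n^{1/4} H_{UR}$ and $\beta_n^{1/4} H_{LL}$, both $\lesssim n^{-\ell} \beta_n^{1/4} \leq n^{-\ell}$, and the LR block is $H_{LR} \lesssim n^{-\ell}\sqrt{\beta_n} \leq n^{-\ell}$.

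With the magnitudes balanced, I expand multilinearly in columns:
\[
\det(\overbar A + K) - \det \overbar A = \sum_{\emptyset \neq T \subseteq [2r]} \det N_T,
\]
where $N_T$ has columns of $K$ at positions $i \in T$ and columns of $\overbar A$ elsewhere. Hadamard's inequality together with the column-norm bounds $\|\overbar A_{\cdot, i}\|_2 \lesssim 1$ and $\|K_{\cdot, i}\|_2 \lesssim n^{-\ell}$ yields $|\det N_T| \lesssim n^{-\ell |T|}$; summing over the $2^{2r}-1$ nonempty subsets (a fixed-cardinality sum, since $r$ is fixed) gives $|\det(\overbar A + K) - \det \overbar A| \lesssim n^{-\ell}$. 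Finally, $\det R = \det S = \beta_n^{r/4}$ implies $\det(RMS) = \beta_n^{r/2} \det M$ for any matrix $M$, so undoing the rescaling delivers (\ref{333}), uniformly in the required region; the almost-sure conclusion is inherited by intersecting the a.s.\ events underlying Lemma \ref{lem:~M} and Theorem \ref{thrm:s-s0}.

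The main obstacle and the main idea coincide: recognizing that the heterogeneous block magnitudes of $\overbar M_n$ must be normalized by rescaling before any uniform determinant comparison succeeds. Once this rescaling is in place, what remains is standard multilinear algebra of a matrix of fixed size $2r \times 2r$.
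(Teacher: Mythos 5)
Your proposal is correct and takes essentially the same route as the paper: the diagonal congruence $R\widetilde M_n S$ with $R=S=\mathrm{diag}(\beta_n^{1/4}I_r,\,I_r)$ is precisely the paper's block-wise rescaling (\ref{2834}), and your entry-wise bounds on the rescaled matrices match (\ref{13578}). The only difference is cosmetic: you close with a column-multilinear expansion plus Hadamard, while the paper invokes Lemma \ref{lem:detA-B} (a Leibniz-expansion bound), both yielding the same $n^{-\ell}$ control after the rescaling.
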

\begin{proof}
Partition $\widetilde M_n(z)$ into four blocks of size $r \times r$:
\begin{align*} \phantom{\,.}
   \widetilde M_n(z) =\begin{bmatrix}  \widetilde M_{11} & \widetilde M_{12} \\ \widetilde M_{21} & \widetilde M_{22}  \end{bmatrix} \, . 
\end{align*}
Let $\underline{\widetilde M}_n(z)$ denote the matrix containing the above four submatrices individually rescaled: 
\begin{align} \phantom{\,.}
   \underline{\widetilde M}_n(z) =\begin{bmatrix} \sqrt{\beta_n} \widetilde M_{11} & \beta_n^{1/4} \widetilde M_{12} \\ \beta_n^{1/4} \widetilde M_{21} & \widetilde M_{22}  \end{bmatrix} \, . \label{2834} 
\end{align}
Let $\underline M_n(z)$ and $\underline {\overbar M}_n(z)$ denote $M_n(z)$ and $\overbar M_n(z)$ similarly partitioned and rescaled. This rescaling simplifies comparison of determinants as all blocks are of similar size. Indeed, as consequences of Lemma \ref{lem:s_n bound}, Lemma \ref{lem123}, and Theorem \ref{thrm:s-s0} (stated in Section \ref{sec:b}), we have the following almost-sure results: 
\begin{align}
& \sup_{|z-a_n| \leq n^{-1/4}\sqrt{\beta_n}} \| \underline{\overbar M}_n(z) \|_\infty  \lesssim 1   \nonumber \, ,  &&  \sup_{|z-a_n| \leq n^{-1/4}\sqrt{\beta_n}} \| \underline{\widetilde M}_n(z) - \underline M_n(z) \|_\infty \lesssim n^{-\ell} \, , \nonumber \\ & \sup_{|z-a_n| \leq n^{-1/4}\sqrt{\beta_n}} \| \underline{M}_n(z) - \underline {\overbar M}_n(z) \|_\infty \lesssim n^{-\ell} \, , \label{13578}
\end{align}
where $\|\cdot\|_\infty$ is the maximum-magnitude entry of the input. 
Applying Lemma \ref{lem:detA-B}, we obtain
\[ \phantom{\,.} \sup_{|z-a_n| \leq n^{-1/4}\sqrt{\beta_n}} | \det \underline{\widetilde M}_n(z) - \det \underline{\overbar M}_n(z) | \lesssim n^{-\ell} \,,
\]
which is equivalent to (\ref{333}) as $\det \widetilde M_n(z) =  \beta_n^{-r/2} \det \underline{\widetilde M}_n(z)$. 
\end{proof}

\begin{lemma} \label{lem:d/dz detM-detM} Let $a_n \geq 1+(2+\eta)\sqrt{\beta_n}$ be a bounded sequence. For any $\ell < 1/8$, almost surely,
\begin{align*} \phantom{\,,} & \sup_{ |z - a_n| \leq n^{-1/4}\sqrt{\beta_n}} \Big| \frac{d}{dz} (\det \widetilde M_n(z) - \det \overbar M_n(z) ) \Big| \lesssim n^{-\ell}   \beta_n^{-(r+1)/2} \, , \qquad \\ &
\sup_{ |z - a_n| \leq n^{-1/4}\sqrt{\beta_n}} \Big| \frac{d}{dz} \det \widetilde M_n(z) \Big| \lesssim \beta_n^{-(r+1)/2} \, . 
\end{align*}
\end{lemma}
\begin{proof} The proof is similar to that of Lemma \ref{lem:detM-detM}. By Lemma \ref{lem:s_n bound}, Lemma \ref{lem:M2}, and Corollary \ref{cor:s'-s0'},
\begin{align}
    &  \sup_{ |z - a_n| \leq n^{-1/4}\sqrt{\beta_n}}   \Big \| \frac{d}{dz} \underline{\overbar M}_n(z) \Big\|_\infty \lesssim \beta_n^{-1/2}  \, , \\ &  \sup_{ |z - a_n| \leq n^{-1/4}\sqrt{\beta_n}}  \Big\| \frac{d}{dz} ( \underline{\widetilde M}_n(z) -  \underline{M}_n(z) ) \Big\|_\infty \lesssim n^{-\ell} \beta_n^{-1/2} \, , \nonumber  \\
    &  \sup_{ |z - a_n| \leq n^{-1/4}\sqrt{\beta_n}}  \Big\| \frac{d}{dz} ( \underline{ M}_n(z) - \underline{\overbar M}_n(z) ) \Big\|_\infty \lesssim n^{-\ell}  \beta_n^{-1/2} \, . \label{13577}
\end{align}
The claim follows from Lemma \ref{lem:d/dz detA-detB}, (\ref{13578}), and (\ref{13577}):
\begin{align*}
  \Big| \frac{d}{dz} (\det \underline{\widetilde M}_n(z) - \det \underline{\overbar M}_n(z) ) \Big|&  \lesssim   \Big\| \frac{d}{dz} ( \underline{\widetilde M}_n(z) -  \underline{\overbar M}_n(z) ) \Big\|_\infty \\ &+  \Big\| \frac{d}{dz} \underline{\overbar M}_n(z) \Big\|_\infty  \| \underline{\widetilde M}_n(z) - \underline M_n(z) \|_\infty  
 \lesssim n^{-\ell} \beta_n^{-1/2} \, . \end{align*}
\end{proof}

\begin{proof}[Proof of (\ref{1.2}) and (\ref{1.5})] We initially assume $\varepsilon_{i,n} = 0$, $1 \leq i \leq r$.  Fix $\ell < 1/4$ and let $\gamma_{i,n}$ denote a circular contour with center $\bar \lambda_i$ and radius $n^{-\ell} \sqrt{\beta_n}$. We divide the proof into four claims:
\begingroup
\begin{enumerate}[label=(\roman*)]
\item $\gamma_{i,n}$ eventually encircles a single root of $\det \overbar M_n(z)$.
\item Almost surely eventually, there exists a unique root $\tilde \lambda_{(i)}$ of $\det \widetilde M_n(z)$ encircled by $\gamma_{i,n}$. 
\item  $\tilde \lambda_{(1)}, \ldots, \tilde \lambda_{(i_0)}$ are real and therefore eigenvalues of $\widetilde S_n$ by Lemma \ref{lem:4.1BGN}.
\item Almost surely eventually, $\widetilde S_n$ has no eigenvalues other than  $\tilde \lambda_{(1)}, \ldots, \tilde \lambda_{(i_0)}$ larger than $1+(2+\eta)\sqrt{\beta_n}$, where $\eta > 0$ is arbitrary. 
\end{enumerate}
\endgroup

Notice that $\gamma_{i,n}$ eventually encircles a single real root of $\det \overbar M_n(z)$: as $\tau_1, \ldots, \tau_{i_0}$ are distinct,  $|\bar \lambda_i - \bar \lambda_j| \gtrsim \sqrt{\beta_n}$, $i \neq j$. We now argue $\gamma_{i,n}$ eventually encircles no complex roots of $\det \overbar M_n(z)$. For  $\xi \in \mathbb{C}$ such that $|\xi| \leq n^{-\ell} \sqrt{\beta_n}$,
\begin{align*}  \overbar D_n(\bar \lambda_i + \xi) - \theta_i^{-2} &= \frac{\xi + \sqrt{(\overbar \lambda_i - \beta_n-1)^2 - 4 \beta_n} (1 - \sqrt{1+y})}{2 \beta_n} \, , \\ 
y & \coloneqq \frac{2(\bar \lambda_i - \beta_n-1) \xi + \xi^2}{(\bar \lambda_i - \beta_n-1)^2-4\beta_n} = \frac{2 (\tau_i^2 + \tau_i^{-2})  \beta_n^{-1/2} \xi + \beta_n^{-1}\xi^2 }{(\tau_i^2 + \tau_i^{-2})^2-4} \,.
\end{align*}
Using $|1-\sqrt{1+z}+z/2| \leq |z|^2/8$ for $z \in \{\Re(z) \geq 0, |z| \leq 1\}$, $|\xi| \leq n^{-\ell} \sqrt{\beta_n}$, and  $|y| \lesssim n^{-\ell}$, 
\begin{align} \phantom{\,,} | \overbar D_n(\bar \lambda_i + \xi) - \theta_i^{-2}| &  \asymp |\xi|\beta_n^{-1} \,. \label{77} \end{align}
For $j \neq i$, almost surely,
\begin{align} | \overbar D_n(\bar \lambda_i + \xi) - \theta_j^{-2} | &= | \overbar D_n(\bar \lambda_i + \xi) - \theta_i^{-2} + (\theta_i^{-2} - \theta_j^{-2})|\asymp \beta_n^{-1/2}  \,. \label{777} \end{align} 
Recalling the formula $\det \overbar M_n(z) = \prod_{j=1}^r (\overbar D_n(z) - \theta_j^{-2} )$, (\ref{77}) and (\ref{777}) imply
\begin{align}
 \phantom{\,.} | \det \overbar M_n(\bar \lambda_i + \xi)| \asymp  |\xi|\beta_n^{-(r+1)/2}  \,.
\label{77777}\end{align}
Consequently, $\gamma_{i,n}$ eventually encircles no root of $\det \overbar M_n(z)$ other than $\bar \lambda_i$ (and no roots occur on $\gamma_{i,n}$). Otherwise, there exists a sequence of perturbations $\xi_n \neq 0$ such that $\det \overbar M_n(\bar \lambda_i + \xi_n) = 0$, contradicting (\ref{77777}). This proves Claim (i). As a consequence, by the winding number theorem,
\begin{align}
    \phantom{\,.} \frac{1}{2 \pi i} \int_{\gamma_{i,n}} \frac{\frac{d}{dz} \det \overbar M_n(z)}{\det \overbar M_n(z)} dz = 1 \label{2374} \,.
\end{align}

To prove Claim (ii), it suffices to show that almost surely eventually, 
\begin{align*}
    \phantom{\,.} \bigg| \frac{1}{2 \pi i} \int_{\gamma_{i,n}} \frac{\frac{d}{dz} \det \widetilde M_n(z)}{\det \widetilde M_n(z)} dz  - \frac{1}{2 \pi i} \int_{\gamma_{i,n}} \frac{\frac{d}{dz} \det \overbar M_n(z)}{\det \overbar M_n(z)} dz \bigg| < 1 \,,
\end{align*}
as the above integrals are integer-valued. Writing $\frac{a}{c} - \frac{b}{d} = \frac{1}{d}\big( \frac{d-c}{c}a + a-b \big)$, an upper bound on the left-hand side is
\begin{align}
& \, n^{-\ell} \sqrt{\beta_n} \sup_{z \in \gamma_{i,n}}  \bigg| \frac{\frac{d}{dz}\det  \widetilde M_n(z)}{\det \widetilde M_n(z)}  - \frac{\frac{d}{dz} \det \overbar M_n(z)}{\det \overbar M_n(z)} \bigg| \\  = &  n^{-\ell} \sqrt{\beta_n}    \sup_{z \in \gamma_{i,n}} \bigg| \frac{1}{\det \overbar M_n} \bigg| \bigg| \frac{\det \overbar M_n - \det \widetilde M_n}{\det \widetilde M_n} \cdot \frac{d}{dz} \det \widetilde M_n  + \frac{d}{dz} \big(  \det \widetilde M_n -  \det \overbar M_n \big) \bigg|  \,. \label{7771} 
\end{align}
Below, the almost-sure bounds we have developed on the terms of (\ref{7771}) are summarized. Let $\ell' \in (\ell, 1/4)$ and $\ell'' \in (0,1/8)$. By Lemmas \ref{lem:detM-detM} and \ref{lem:d/dz detM-detM} (applied to $a_n = \bar \lambda_i$ and $\eta \in (0, \tau_{i_0}^2 + \tau_{i_0}^{-2} - 2)$), 
\begin{align}
 \phantom{\,.}   &  \sup_{z \in \gamma_{i,n}} | \det \widetilde M_n(z) - \det \overbar M_n(z) | \lesssim   n^{-\ell'} \beta_n^{-r/2} \,, \label{7770} \\ 
 \phantom{\, . } & \sup_{z \in \gamma_{i,n}} \Big| \frac{d}{dz} (\det \widetilde M_n(z) - \det \overbar M_n(z) ) \Big| \lesssim n^{-\ell''} \beta_n^{-(r+1)/2} \, , \\
 \phantom{\,.} & \sup_{z \in \gamma_{i,n}} \Big| \frac{d}{dz} \det \widetilde M_n(z) \Big| \lesssim \beta_n^{-(r+1)/2} \, . 
\end{align}
As (\ref{77777}) depends on $\xi$ only through $|\xi|$,
\begin{align}
    &  |\det \overbar M_n(z) | \asymp n^{-\ell} \beta_n^{-r/2} \, .
\label{7772}\end{align} 
Finally, (\ref{7770}), (\ref{7772}), and $\ell < \ell'$ imply,
\begin{align}
&   |\det \widetilde M_n(z) | \asymp n^{-\ell} \beta_n^{-r/2} \, . \label{7773}
\end{align}
Thus, from (\ref{7771}) - (\ref{7773}), almost surely,
\begin{align}
 n^{-\ell} \sqrt{\beta_n} \sup_{z \in \gamma_{i,n}}  \bigg| \frac{\frac{d}{dz}\det  \widetilde M_n(z)}{\det \widetilde M_n(z)}  - \frac{\frac{d}{dz} \det \overbar M_n(z)}{\det \overbar M_n(z)} \bigg| 
     \lesssim & \,  \frac{n^{-\ell} \sqrt{\beta_n}}{n^{-\ell}\beta_n^{-r/2} } \bigg( \frac{n^{-\ell'} \beta_n^{-r/2} }{n^{-\ell} \beta_n^{-r/2}} \cdot \beta_n^{-(r+1)/2} +  n^{-\ell''}\beta_n^{-(r+1)/2}  \bigg) \nonumber \\
     \lesssim & \, n^{\ell-\ell'} + n^{-\ell''} \longrightarrow 0         \,. \label{006}
\end{align}
\noindent This completes the proof of Claim (ii): almost surely eventually, there exist roots $\tilde \lambda_{(1)}, \ldots, \tilde \lambda_{(i_0)}$ respectively encircled by $\gamma_{1,n}, \ldots, \gamma_{i_0, n}$. Claim (iii)---that $\tilde \lambda_{(1)}, \ldots, \tilde \lambda_{(i_0)}$ are eigenvalues of $\widetilde S_n$---follows from  Lemma \ref{lem:4.1BGN} and the fact that $\widetilde M_n(z)$ is invertible for $z \not \in \mathbb{R}$ (Lemma A.1 of \cite{BGN12}).

It remains to prove Claim (iv): $\tilde \lambda_1 = \tilde \lambda_{(1)}, \ldots, \tilde \lambda_{i_0} = \tilde \lambda_{(i_0)}$.  
 Suppose $\det \widetilde M_n(z)$ has a sequence of roots $\tilde \lambda \in [1+(2+\eta)\sqrt{\beta_n}, C]$, for some $\eta > 0$, $C > 1$, such that infinitely often, $|\tilde \lambda - \bar \lambda_i| > n^{-\ell} \sqrt{\beta_n}$ for all $i \leq i_0$. Let $\gamma_n$ denote a sequence of circular contours of radius $n^{-\ell} \sqrt{\beta_n}$ centered at $\tilde \lambda$. Then, eventually,
\begin{align}
   & \frac{1}{2 \pi i} \int_{\gamma_n} \frac{\frac{d}{dz} \det \widetilde M_n(z)}{\det \widetilde M_n(z)} dz = 1  \,, 
   &   \frac{1}{2 \pi i} \int_{\gamma_n} \frac{\frac{d}{dz} \det \overbar M_n(z)}{\det \overbar M_n(z)} dz = 0 \,. \label{51}
\end{align}
Note that on $\gamma_n$, (\ref{7770}) - (\ref{7772}) hold while (\ref{7773}) becomes $\beta_n^{-r/2}$.  Thus, similarly to (\ref{006}), the left integral in (\ref{51}) converges to the right integral, a contradiction. Furthermore, eventually $\widetilde S_n$ has no eigenvalues greater than $C$:
\[ \phantom{\,.} \frac{1}{\sqrt{m}} \|\widetilde X_n\|_2 \leq \sum_{i=1}^r \theta_i \|u_i\|_2 \|v_i\|_2 + \frac{ 1}{\sqrt{m}} \|X_n\|_2 \leq C \,,
\]
the last inequality holding almost surely eventually. This establishes Claim (iv). We conclude that almost surely eventually, the leading eigenvalues of $\widetilde S_n$ satisfy
\begin{align*} &  |\tilde  \lambda_i - \bar \lambda_i| \leq  n^{-\ell} \sqrt{\beta_n}   \,, \hspace{2cm}  i \leq i_0 \, , \end{align*}
which is (\ref{1.5}) in the case $\varepsilon_{i,n} = 0$. 

For general signal strengths $\theta_i = \tau \beta_n^{1/4} (1+\varepsilon_{i,n})$, the above argument may be used to show that $\tilde \lambda_i$ localizes about  $\overbar D_n^{(-1)} ( \theta_i^{-2} )$ and that Claim (iv) holds. Recalling that $\overbar D_n^{(-1)}(t)   = (t+1)(\beta_n t+1)/t$ (Lemma \ref{lem:Dbar}), for $i \leq i_0$,
\begin{align} \phantom{\, .} \big|  \overbar D_n^{(-1)} ( \theta_i^{-2} ) - \bar \lambda_i \big| &=  \big|  \overbar D_n^{(-1)} ( \theta_i^{-2} ) - \overbar D_n^{(-1)}\big( (\tau_i \beta_n^{1/4})^{-2}\big)  \big| = (\tau_i^2 + \tau_i^{-2} (1+\varepsilon_{i,n})^{-2} ) | \varepsilon_{i,n} | (2+\varepsilon_{i,n}) \sqrt{\beta_n}  \nonumber
\\ &\lesssim |\varepsilon_{i,n}| \sqrt{\beta_n}        \, .
\end{align}
Thus, almost surely eventually,
\begin{align*} \phantom{\,.} |\tilde \lambda_i - \bar \lambda_i| &\leq   \big| \tilde \lambda_i - \overbar D_n^{(-1)} ( \theta_i^{-2} )  \big| +  \big|\overbar D_n^{(-1)} ( \theta_i^{-2} ) - \bar \lambda_i \big|     \lesssim (n^{-\ell} + |\varepsilon_{i,n}| ) \sqrt{\beta_n} \, ,
\end{align*} 
establishing (\ref{1.5}). (\ref{1.2}) is a consequence of (\ref{1.5}) and Claim (iv).

\end{proof}

\section{Proof of singular vector results: (\ref{qwerty}), (\ref{1.8}), and (\ref{1.9})}
We prove (\ref{1.8}) and (\ref{1.9}), from which (\ref{qwerty}) immediately follows.   For notational lightness, this section assumes $\varepsilon_{i,n} = 0$. All bounds are understood to hold only almost surely.
\begin{lemma} Let $i \leq i_0$, $j \leq r$, and $\ell < 1/4$. Let $\tilde u_i, \tilde v_i$ be the singular vectors corresponding to $\tilde \sigma_i = \sqrt{\tilde \lambda_i}$. Almost surely,
\begin{align}
  &  \phantom{\,.}  | \bar \sigma_i \bar s_n(\bar \lambda_i) \theta_j \langle \tilde v_i, v_j \rangle  +  \langle \tilde u_i, u_j \rangle | \lesssim  n^{-\ell} ( \beta_n^{-1/4}  \| V_n^\top \tilde v_i \|_1 + \beta_n^{1/4} )  \label{912} \, , \end{align}
and for $i \neq j$,
\begin{align}
    \phantom{\,.} | \langle \tilde v_i, v_j \rangle |  \lesssim n^{-\ell}  ( | \langle \tilde v_i, v_i \rangle | +  \sqrt{\beta_n} ) \, . \label{916}
\end{align}
\end{lemma}

\begin{proof}
 Recall Lemma \ref{lem: BG5.1}: the column vector \begin{align} \begin{bmatrix} \Theta_n V_n^\top \tilde v_i \\ \Theta_n U_n^\top \tilde u_i \end{bmatrix}    \label{692} \end{align}
belongs to the kernel of $\widetilde M_n(\tilde \lambda_i)$. As (\ref{692}) is orthogonal to the $j$-th row of $\widetilde M_n(\tilde \lambda_i)$,
\begin{align*}
 \phantom{\,.}    \sum_{k=1}^r \tilde \sigma_i u_j^\top (S_n - \tilde \lambda_i I_n)^{-1} u_k \theta_k \langle \tilde v_i, v_k \rangle + \sum_{k=1}^r \Big( \theta_k^{-1} {\bf 1}_{\{j = k\}} + \frac{1}{\sqrt{m}} u_j^\top (S_n - \tilde \lambda_i I_n)^{-1} X_n v_k \Big) \theta_k \langle \tilde u_i, u_k \rangle   = 0 \,.
\end{align*}
Isolating the $j$-th term, by Lemma \ref{lem123}, 
\begin{align}  
| \tilde \sigma_i u_j^\top (S_n - \tilde \lambda_i I_n)^{-1} u_j \theta_j \langle \tilde v_i, v_j \rangle  +  \langle \tilde u_i, u_j \rangle | & \leq \tilde \sigma_i \sum_{k \neq j} \theta_k  | u_j^\top (S_n - \tilde \lambda_i I_n)^{-1} u_k | | \langle \tilde v_i, v_k \rangle |\nonumber \\ & + \frac{1}{\sqrt{m}} \sum_{k=1}^r \theta_k | u_j^\top (S_n - \tilde \lambda_i I_n)^{-1} X_n v_k | 
\nonumber   \\
&\lesssim n^{-\ell} \beta_n^{-1/4}  \sum_{k \neq j} | \langle \tilde v_i, v_k \rangle | + n^{-\ell} \beta_n^{1/4} \, , \end{align} 
\begin{align}
  | \tilde \sigma_i s_n(\tilde \lambda_i) \theta_j \langle \tilde v_i, v_j \rangle  +  \langle \tilde u_i, u_j \rangle | &  \leq | \tilde \sigma_i u_j^\top (S_n - \tilde \lambda_i I_n)^{-1} u_j \theta_j \langle \tilde v_i, v_j \rangle \nonumber\\ & +  \langle \tilde u_i, u_j \rangle |  + \tilde \sigma_i \theta_j |u_j^\top (S_n - \tilde \lambda_i I_n)^{-1} u_j  - s_n(\tilde \lambda_i) | | \langle \tilde v_i, v_j \rangle | \nonumber\\
  & \lesssim  n^{-\ell} \beta_n^{-1/4}  \sum_{k =1}^r | \langle \tilde v_i, v_k \rangle | + n^{-\ell} \beta_n^{1/4}  \, . \label{912.0}
\end{align}
Note that eventually, $|s_n(\tilde \lambda_i) - s_n(\bar \lambda_i)| \leq \frac{|\tilde \lambda_i - \bar \lambda_i|}{| \tilde \lambda_i - \lambda_1||\bar \lambda_i - \lambda_1|}$. By an application of (\ref{1.5}) and  Theorem \ref{thrm:s-s0}, 
\begin{align}  | \tilde \sigma_i s_n(\tilde \lambda_i) - \bar \sigma_i \bar s_n(\bar \lambda_i)| & \leq |\tilde \sigma_i - \bar  \sigma_i|   |s_n(\tilde \lambda_i)| + | \bar  \sigma_i | | s_n(\tilde \lambda_i) - s_n(\bar \lambda_i)| + | \bar  \sigma_i| | s_n(\bar \lambda_i) - \bar s_n(\bar \lambda_i)| \nonumber \\
\phantom{\,.} & \lesssim  n^{-\ell} + n^{-\ell}  \beta_n^{-1/2} + n^{-\ell}  \beta_n^{-1/2}  \,. \label{912.1}
\end{align}
(\ref{912}) follows from (\ref{912.0}) and (\ref{912.1}).
By a similar argument for row $j+r$ of $\widetilde M_n(\tilde \lambda_i)$, 
\begin{align} \phantom{\,.} \Big| \langle \tilde v_i, v_j \rangle  + \bar \sigma_i \Big( \beta_n \bar s_n(\bar \lambda_i) - \frac{1-\beta_n}{\bar \lambda_i}\Big) \theta_j \langle \tilde u_i, u_j \rangle \Big| \lesssim  n^{-\ell} (  \beta_n^{1/4}  \| V_n^\top \tilde v_i \|_1  +  \beta_n^{3/4} )   \label{913} \,.
\end{align}
Substitution of $(\ref{912})$ into $(\ref{913})$ yields
\[ \phantom{\,.} \Big|  \langle \tilde v_i, v_j \rangle - \bar \lambda_i \bar s_n(\bar \lambda_i)   \Big(\beta_n \bar s_n(\bar \lambda_i) - \frac{1-\beta_n}{\bar \lambda_i}\Big) \theta_j^2 \langle \tilde v_i, v_j \rangle  \Big| \lesssim 
n^{-\ell} ( \| V_n^\top \tilde v_i \|_1  + \sqrt{\beta_n} )  \,.\]
Recall that $ \overbar D_n(z) = \beta_n z \bar s_n(z)^2 - (1-\beta_n) \bar s_n(z)$ and $\bar \lambda_i$ is defined by  $\overbar D_n(\bar \lambda_i) = \theta_i^{-2}$. Thus, for $j \neq i$,
\begin{align}
   \phantom{\,.}  | \langle \tilde v_i, v_j \rangle | \lesssim  n^{-\ell} ( \| V_n^\top \tilde v_i \|_1  + \sqrt{\beta_n} )  \ \,. \nonumber
\end{align}
Rewriting this as $ (1-Cn^{-\ell}) (\|V_n^\top \tilde v_i\|_1 - | \langle \tilde v_i, v_i \rangle |) \lesssim n^{-\ell} ( | \langle \tilde v_i, v_i \rangle | + \sqrt{\beta_n})$, we  recover (\ref{916}):
\begin{gather*} \phantom{\,.} | \langle \tilde v_i, v_j \rangle | \lesssim n^{-\ell} ( | \langle \tilde v_i, v_i \rangle | + \sqrt{\beta_n}) \, . 
\end{gather*}
\end{proof}

\begin{proof} [Proof of  (\ref{1.8}) and (\ref{1.9})]
\noindent Let $i \leq i_0$ and $\ell < 1/8$. By (\ref{765}), $a_n + b_n + 2 c_n = 1$ with 
\begin{align}
 \phantom{\,.} a_n &= \tilde \lambda_i \tilde v_i^\top P_n^\top (S_n - \tilde \lambda_i)^{-2} P_n \tilde v_i = \tilde \lambda_i \sum_{j,k=1}^r \theta_j \theta_k \langle \tilde v_i, v_j \rangle \langle \tilde v_i, v_k \rangle u_j^\top (S_n - \tilde \lambda_i I_n)^{-2} u_k \nonumber \,, \\
\phantom{\,.} b_n &= \frac{1}{m}\tilde u_i^\top P_n X_n^\top (S_n - \tilde \lambda I_n)^{-2} X_n P_n^\top \tilde u_i
= \frac{1}{m} \sum_{j,k=1}^r  \theta_j \theta_k \langle \tilde u_i, u_j \rangle \langle \tilde u_i, u_k \rangle v_j^\top X_n^\top (S_n - \tilde \lambda_i I_n)^{-2} X_n v_k  \nonumber \,, \\
\phantom{\,.} c_n &=  \frac{\tilde \sigma_i}{\sqrt{m}} \tilde u_i^\top P_n X_n^\top ( S_n - \tilde \lambda I_n)^{-2} P_n \tilde v_i = \frac{\tilde \sigma_i}{\sqrt{m}} \sum_{j,k=1}^r  \theta_j \theta_k \langle \tilde u_i, u_j \rangle \langle \tilde v_i, v_k \rangle v_j^\top X_n^\top (S_n - \tilde \lambda_i I_n)^{-2}  u_k 
\,. 
\end{align}
By (\ref{1.5}) and Corollary \ref{cor:s'-s0'},
\begin{align*}
    \Big| \tilde \lambda_i \frac{d}{dz} s_n(\tilde \lambda_i) - \bar \lambda_i \frac{d}{dz} \bar s_n(\bar \lambda_i) \Big| & \leq  |\tilde \lambda_i - \bar \lambda_i | \Big| \frac{d}{dz} s_n(\tilde \lambda_i) \Big| + | \bar \lambda_i | \Big| \frac{d}{dz}(s_n(\tilde \lambda_i) - s_n(\bar \lambda_i)) \Big| + | \bar \lambda_i | \Big|  \frac{d}{dz}(s_n(\bar \lambda_i) - \bar s_n(\bar \lambda_i)) \Big| \\
  & \lesssim   n^{-2\ell} \beta_n^{-1/2}  + n^{-2\ell} \beta_n^{-1}  +  n^{-\ell} \beta_n^{-1}  \,.
\end{align*}
Together with (\ref{222}) of Lemma \ref{lem:M2},
\begin{align}
  a_n &= \sum_{j,k=1}^r\theta_j \theta_k \langle \tilde v_i, v_j \rangle \langle \tilde v_i, v_k \rangle \Big( \delta_{jk} \bar \lambda_i \frac{d}{dz} \bar s_n(\bar \lambda_i) + O( n^{-\ell} \beta_n^{-1}) \Big)  \nonumber\\
  & = \sum_{j=1}^r \theta_j^2 | \langle \tilde v_i, v_j \rangle |^2  \bar \lambda_i \frac{d}{dz} \bar s_n(\bar \lambda_i)  + O(n^{-\ell} \beta_n^{-1/2}  \|V_n^\top \tilde v_i \|_1^2 )\, .
    \end{align}
Similarly, using  
\begin{align*}
      \phantom{\, ,}
      \frac{1}{m} \tr (S_n - \tilde \lambda_i)^{-2} S_n & = \frac{1}{m} \tr \big( (S_n - \tilde \lambda_i I_n)^{-1} + \tilde \lambda_i (S_n - \tilde \lambda_i I_n)^{-2} \big)   = \beta_n \Big( s_n(\tilde \lambda_i) + \tilde \lambda_i \frac{d}{dz} s_n(\tilde \lambda_i) \Big)   \,,
\end{align*}
(\ref{223}), (\ref{225}), and (\ref{226}),  it may be shown that
\begin{align}
   b_n &= 
\sum_{j,k=1}^r  \theta_j \theta_k \langle \tilde u_i, u_j \rangle \langle \tilde u_i, u_k \rangle  \Big( \delta_{jk} \beta_n  \Big( \bar s_n(\bar \lambda_i) + \bar \lambda_i \frac{d}{dz} \bar s_n(\bar \lambda_i) \Big) + O(n^{-\ell}) \Big)  \nonumber \\
&=  \sum_{j=1}^r  \theta_j^2 | \langle \tilde u_i, u_j \rangle |^2 \beta_n  \Big( \bar s_n(\bar \lambda_i) + \bar \lambda_i \frac{d}{dz} \bar s_n(\bar \lambda_i) \Big)  + O(n^{-\ell} \sqrt{\beta_n}) \nonumber \, , \\
\phantom{\,.} |c_n| & \lesssim n^{-\ell} \beta_n^{-1/2}  \sum_{j,k=1}^r  \theta_j \theta_k | \langle \tilde u_i, u_j \rangle \langle \tilde v_i, v_k \rangle |  \lesssim n^{-\ell} \, .
\end{align}
Thus, 
\begin{align}
   &  \sum_{j=1}^r \theta_j^2 | \langle \tilde v_i, v_j \rangle |^2  \bar \lambda_i \frac{d}{dz} \bar s_n(\bar \lambda_i) +  \sum_{j=1}^r  \theta_j^2 | \langle \tilde u_i, u_j \rangle |^2 \beta_n  \Big( \bar s_n(\bar \lambda_i) + \bar \lambda_i \frac{d}{dz} \bar s_n(\bar \lambda_i) \Big)  =  1 + O\big(n^{-\ell}(1 +  \beta_n^{-1/2}   \| V_n^\top \tilde v_i \|_1^2) \big) \,. \label{555}
\end{align}
 Terms on the left-hand side with $j \neq i$ are negligible: by (\ref{916}), 
\begin{align*}
   & \bigg| \sum_{j \neq i} \theta_j^2 | \langle \tilde v_i, v_j \rangle |^2  \bar \lambda_i \frac{d}{dz} \bar s_n(\bar \lambda_i) \bigg| \lesssim  n^{-4 \ell}  ( \beta_n^{-1/2} | \langle \tilde v_i, v_i \rangle |^2 + \sqrt{\beta_n}) \, , \nonumber \\
   &     \bigg|  \sum_{j \neq i}  \theta_j^2 | \langle \tilde u_i, u_j \rangle |^2 \beta_n  \Big( \bar s_n(\bar \lambda_i) + \bar \lambda_i \frac{d}{dz} \bar s_n(\bar \lambda_i) \Big)   \bigg|  \lesssim \sqrt{\beta_n} \, ,
\end{align*}
implying
\begin{align}
     & \theta_i^2 | \langle \tilde v_i, v_i \rangle |^2  \bar \lambda_i \frac{d}{dz} \bar s_n(\bar \lambda_i) +  \theta_i^2 | \langle \tilde u_i, u_i \rangle |^2 \beta_n \Big( \bar s_n(\bar \lambda_i) + \bar \lambda_i \frac{d}{dz} \bar s_n(\bar \lambda_i) \Big) =  1 + O\big(n^{-\ell}(1 +  \beta_n^{-1/2}   \| V_n^\top \tilde v_i \|_1^2) + \sqrt{\beta_n} \big) \, . \label{557} 
\end{align}

    Substitution of (\ref{912}) into (\ref{557}) yields 
\begin{align}
   &  | \langle \tilde v_i, v_i \rangle |^2 \Big( \theta_i^2  \bar \lambda_i \frac{d}{dz} \bar s_n(\bar \lambda_i) +  \theta_i^4 \bar \lambda_i \bar s_n(\bar \lambda_i)^2    \beta_n  \Big( \bar s_n(\bar \lambda_i) + \bar \lambda_i \frac{d}{dz} \bar s_n(\bar \lambda_i) \Big) \Big) \nonumber \\
    = \, &  | \langle \tilde v_i, v_i \rangle |^2 \bigg( \theta_i^2 \int \frac{\bar \lambda_i}{(t - \bar \lambda_i)^2} d \overbar F_n(t) + \theta_i^4 \bar \lambda_i \bar s_n(\bar \lambda_i)^2 \beta_n \int \frac{t}{(t-\bar \lambda_i)^2} d \overbar F_n(t) \bigg) \nonumber \\ 
     = \, &    1 + O\big(n^{-\ell}(1 +  \beta_n^{-1/2}   \| V_n^\top \tilde v_i \|_1^2) + \sqrt{\beta_n} \big)  \, . \label{851}
\end{align}
 (\ref{851}) is included to facilitate comparison with Theorem 2.10 of \cite{BGN12}. In the regime where $\beta_n \rightarrow \beta > 0$ and $\theta_i > 1$ is constant, the term multiplying $|\langle \tilde v_i, v_i \rangle|^2$ converges to a positive limit, and $|\langle \tilde v_i, v_i \rangle|^2$ converges to the limit's inverse. Here, since 
 \[  \phantom{}\Big|\theta_i^2 | \langle \tilde u_i, u_i \rangle |^2 \beta_n \Big( \bar s_n(\bar \lambda_i) + \bar \lambda_i \frac{d}{dz} \bar s_n(\bar \lambda_i) \Big) \Big| \lesssim \sqrt{\beta_n} \,, \]
(\ref{916}) and (\ref{557}) imply
\begin{align}  \phantom{\,.} \theta_i^2 | \langle \tilde v_i, v_i \rangle |^2   \bar \lambda_i \frac{d}{dz} \bar s_n(\bar \lambda_i) = 1 + O\big(n^{-\ell}(1 +  \beta_n^{-1/2}   \| V_n^\top \tilde v_i \|_1^2) + \sqrt{\beta_n} \big) \label{849} 
\, . \end{align}
Evaluated at $z = \bar \lambda_i$, Lemma \ref{lem:s_n bound} is tight up to constants: $\bar \lambda_i \frac{d}{dz} \bar s_n(\bar \lambda_i) \asymp \beta_n^{-1}$. Thus, for $j \neq i$,
\begin{align}
  &  (1 - C n^{-\ell} )  | \langle \tilde v_i, v_i \rangle |^2   \lesssim \sqrt{\beta_n} \,, & | \langle \tilde v_i, v_i \rangle | \lesssim \beta_n^{1/4} \,, && |\langle \tilde v_i, v_j \rangle | \lesssim n^{-2 \ell} \beta_n^{1/4} \,. \label{848}
\end{align}
The third inequality follows from (\ref{916}). We have established (\ref{1.9}): right singular vectors asymptotically do not correlate with right signal vectors. 

Now, by (\ref{912}), (\ref{849}), and (\ref{848}), we obtain
\begin{align}
     |\langle \tilde u_i, u_i \rangle|^2 &= ( \bar \sigma_i \bar s_n(\bar \lambda_i) \theta_i \langle \tilde v_i, v_i \rangle )^2 + O(n^{-4\ell}) 
     = \frac{\bar s_n(\bar \lambda_i)^2 }{\frac{d}{dz} \bar s_n(\bar \lambda_i)}   + O(n^{-\ell} + \sqrt{\beta_n})  \, . \label{847}
\end{align}
Direct calculation yields 
\begin{align*}
    & \bar s_n(\bar \lambda_i) = \frac{-(\tau_i^2 + \tau_i^{-2}) + \sqrt{(\tau_i^2 + \tau_i^{-2})^2 - 4}}{2 \sqrt{\beta_n}} + O(1) \, , \\
    & \frac{d}{dz} \bar s_n(\bar \lambda_i) = -\frac{1}{2 \beta_n \bar \lambda_i} \bigg( 1 - \frac{\tau_i^2 + \tau_i^{-2} }{\sqrt{(\tau_i^2 + \tau_i^{-2})^2-4}} \bigg) + O(\beta_n^{-1/2})  \, , \end{align*}
from which we conclude (\ref{1.8}):
\begin{align}
    \phantom{\,.} |\langle \tilde u_i, u_i \rangle |^2 = 1 - \tau_i^4 + O(n^{-\ell} + \sqrt{\beta_n}) \,.
\end{align}
Therefore, $|\langle \tilde u_i, u_i \rangle |^2  \xrightarrow{a.s.}  1 - \tau_i^{-4}$. For $j \neq i$, $| \langle \tilde u_i, u_j \rangle | \lesssim n^{-2\ell}$ follows from (\ref{912}).  

\end{proof} 


\section{Convergence rate of the Stieltjes transform} \label{sec:b}

This section develops bounds on $|s_n(z) - \bar s_n(z)|$ and $|\frac{d}{dz}(s_n(z) - \bar s_n(z))|$ based on the work of Zhidong Bai, Jack Silverstein, Jiang Hu, and Wang Zhou, in particular, Section 4.1 of \cite{BHZ12} and Section 8 of \cite{BS_SpAn}.

\begin{theorem} \label{thrm:s-s0} Let $\eta > 0$ and $u_n \geq 1+(2+\eta)\sqrt{\beta_n}$ be a bounded sequence. For any $\ell < 1/4$, \mbox{almost surely}, 
\begin{align} \phantom{\,.} \sup_{|z - u_n| \leq n^{-1/4}\sqrt{\beta_n}} \, |s_n(z) - \bar s_n(z) | \lesssim   n^{-\ell} \beta_n^{-1/2}  \,. \end{align}
\end{theorem}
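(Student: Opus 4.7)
The plan is to decompose
\[ s_n(z) - \bar s_n(z) = \bigl(s_n(z) - \E s_n(z)\bigr) + \bigl(\E s_n(z) - \bar s_n(z)\bigr) \]
into a stochastic fluctuation and a bias term, bound each at a single $z$ with high probability, then lift to the supremum via a net argument using analyticity of the resolvent. Borel--Cantelli finally converts the high-probability estimates to almost-sure ones. Throughout, the key geometric fact is that for $z$ in the ball $|z-u_n|\le n^{-1/4}\sqrt{\beta_n}$ one has $\mathrm{dist}(z,[(1-\sqrt{\beta_n})^2,(1+\sqrt{\beta_n})^2])\asymp\sqrt{\beta_n}$, and that almost surely eventually the same holds for $\mathrm{dist}(z,\sigma(S_n))$ by standard edge rigidity for sample covariance matrices.

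For the stochastic part I would use the Bai--Silverstein martingale decomposition
\[ s_n(z) - \E s_n(z) = \sum_{k=1}^n (\E_k - \E_{k-1})\,\tfrac{1}{n}\tr(S_n-z)^{-1}, \]
where $\E_k$ conditions on the first $k$ rows of $X_n$. Each difference is controlled by the rank-one formula for $\tr(S_n-z)^{-1} - \tr(S_n^{(k)}-z)^{-1}$, with magnitude at most $1/\mathrm{dist}(z,\sigma(S_n))\asymp\beta_n^{-1/2}$. Applying Burkholder's inequality at a high moment $p$ then gives
\[ \p\bigl(|s_n(z)-\E s_n(z)|>t\bigr)\le C_p\,(n^{-1/2}\beta_n^{-1/2}/t)^{2p}, \]
so choosing $p$ large enough makes the probability $n^{-c}$ for any desired $c$.

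For the bias term I would derive a self-consistent equation for $\E s_n$ through the Schur-complement identity
\[ G_{kk}(z) = \frac{1}{-z\,\bigl(1+\tfrac{1}{m_n}x_k^\top (S_n^{(k)}-z)^{-1} x_k\bigr)}, \]
replace the quadratic form by its trace via Hanson--Wright, then compare to the deterministic Marchenko--Pastur fixed-point equation satisfied by $\bar s_n$. Because $z$ is at distance $\asymp\sqrt{\beta_n}$ from the MP support, the linearization of the self-consistent map has stability constant of order $\sqrt{\beta_n}$, and a standard continuity argument turns the approximate fixed-point identity into the bound $|\E s_n(z)-\bar s_n(z)|\lesssim n^{-1/2}\beta_n^{-1/2}$ or better. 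To pass from pointwise to $\sup_{|z-u_n|\le n^{-1/4}\sqrt{\beta_n}}$, I would note that both Stieltjes transforms are holomorphic in the ball with derivatives of order $\beta_n^{-1}$, and union-bound over a net of $n^A$ points, incurring only a polynomial cost absorbed by the high moment used in Burkholder.

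The main obstacle is that this is \emph{not} a routine application of a proportional-regime local law: every constant must be tracked explicitly in $\beta_n$. In the regime $\beta_n\to 0$ the Marchenko--Pastur density blows up as $\beta_n^{-1/2}$ across a support of width $4\sqrt{\beta_n}$, and the target error $n^{-\ell}\beta_n^{-1/2}$ is smaller than the size of $\bar s_n(z)\asymp\beta_n^{-1/2}$ only by the factor $n^{-\ell}$. Consequently one must prevent the rank-one stability constants, the Hanson--Wright variance proxy for $m_n^{-1}x_k^\top(S_n^{(k)}-z)^{-1}x_k$, and the inverse stability of the MP map from each losing a factor $\beta_n^{-1/2}$ more than is permitted; the restriction $\ell<1/4$ is precisely what keeps all three budgets simultaneously feasible, and making this accounting airtight---together with the edge rigidity input needed to guarantee $\mathrm{dist}(z,\sigma(S_n))\asymp\sqrt{\beta_n}$ at the required rate---is the technical heart of the argument.
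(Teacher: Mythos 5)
Your high-level plan matches the paper's proof exactly: split $s_n-\bar s_n$ into the martingale fluctuation $s_n-\E s_n$ and the bias $\E s_n-\bar s_n$, control the former via the Bai--Silverstein row-removal martingale and Burkholder (Lemma~\ref{lem:s-Es}), control the latter via a Schur-complement self-consistent equation compared against the Marchenko--Pastur fixed point (Lemma~\ref{lem:Es-s0}), and use concentration of $\lambda_1(S_n)$ near $(1+\sqrt{\beta_n})^2$ so that $\operatorname{dist}(z,\sigma(S_n))\gtrsim\sqrt{\beta_n}$. One caveat about the last ingredient: under assumption A1 (only fourth moments), the \emph{quantitative} tail $\p(\lambda_1\geq 1+(2+\eta/2)\sqrt{\beta_n})\lesssim 1/n$ that feeds into the moment computations is not generic ``edge rigidity''; the paper obtains it via the Chen--Pan truncation-and-normalization device (Lemma~\ref{lem:max_eig2}), to which it reduces at the start of Section~\ref{sec:b}.

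Two tactical points diverge, one cosmetic and one a genuine gap. First, you propose Burkholder at a high power $p$ plus a union bound over a net, whereas the paper uses only the second moment and handles the supremum by evaluating at a single shifted point $z'=u_n+i\,n^{-1/4}\sqrt{\beta_n}$ and then invoking the Lipschitz bounds $|s_n(z)-s_n(z')|$, $|\bar s_n(z)-\bar s_n(z')|\lesssim n^{-1/4}\beta_n^{-1/2}$ that follow from the spacing bound, equations (\ref{002}) and (\ref{996}). Since the ball has radius $n^{-1/4}\sqrt{\beta_n}$ and the resolvent derivative there is $O(\beta_n^{-1})$, this Lipschitz cost lands exactly at $n^{-1/4}\beta_n^{-1/2}$, which is acceptable for any $\ell<1/4$ --- so no net and no high moments are needed. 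Second, and this is where your outline has a real hole: your bound $|\gamma_k|\lesssim\beta_n^{-1/2}$ holds only on the spacing event, and for \emph{real} $z$ (which the ball $|z-u_n|\le n^{-1/4}\sqrt{\beta_n}$ certainly contains) the resolvent is not uniformly bounded across the probability space, so the high moments $\E|\gamma_k|^{2p}$ you need for Burkholder cannot be formed without a deterministic regularization. The paper resolves this precisely by carrying out all the moment estimates at $z'$, where $\Im(z')\geq 1/\sqrt{m}$ supplies the deterministic fallback $|\sigma_k|,|\tilde b_k|\leq 1/\Im(z')$ on the (probability $\lesssim 1/n$) bad event --- see the splitting around (\ref{9876543}) and (\ref{957}) --- and only then descends to the real axis via the Lipschitz step. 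You would need an analogous device, i.e.\ first lifting your net off the real axis, before your high-moment version can be made airtight.
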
 
\begin{corollary}  \label{cor:s'-s0'}
Let $\eta > 0$ and $u_n \geq 1 + (2+\eta)\sqrt{\beta_n}$ be a bounded sequence. For any $\ell < 1/8$, \mbox{almost surely},
\begin{align} \phantom{\,.} \sup_{ |z-u_n| \leq n^{-1/8}\sqrt{\beta_n}} \, \Big| \frac{d}{dz} \big(s_n(z) - \bar s_n(z) \big) \Big| \lesssim n^{-\ell}   \beta_n^{-1}         \,.
\end{align}
\end{corollary}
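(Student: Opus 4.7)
I plan to deduce Corollary \ref{cor:s'-s0'} from Theorem \ref{thrm:s-s0} by Cauchy's integral formula for the derivative of an analytic function. Let $f_n(z) = s_n(z) - \bar s_n(z)$. First, I would verify that $f_n$ extends holomorphically to a complex neighborhood of $u_n$ of radius $\asymp \sqrt{\beta_n}$. Under A1 the Bai--Yin theorem gives, almost surely, $\max_i \lambda_i(S_n) \leq (1+\sqrt{\beta_n})^2 + o(\sqrt{\beta_n})$, and together with the fact that $\bar s_n$ is analytic off $\mathrm{supp}(\overbar F_n)$, this guarantees that every $z$ with $|z - u_n| \leq n^{-1/8}\sqrt{\beta_n}$ is at distance $\asymp \sqrt{\beta_n}$ from the singularities of $f_n$, leaving plenty of room around $z$ for a Cauchy contour.

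Given such a $z$, I would apply
\[ f_n'(z) \;=\; \frac{1}{2\pi i}\oint_{|\zeta - z| = \rho}\frac{f_n(\zeta)}{(\zeta - z)^2}\,d\zeta \]
on a circle of radius $\rho$ contained in the analyticity region, and bound the integrand arc by arc. On the portion with $|\mathrm{Im}(\zeta)| \leq n^{-1/4}\sqrt{\beta_n}$, I would apply Theorem \ref{thrm:s-s0} to the bounded real sequence $\mathrm{Re}(\zeta)$ (admissible since it eventually exceeds $1 + (2+\eta)\sqrt{\beta_n}$), giving $|f_n(\zeta)| \lesssim n^{-\ell_0}\beta_n^{-1/2}$ for any fixed $\ell_0 < 1/4$. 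On the remaining portion the elementary resolvent bound $|f_n(\zeta)| \lesssim \beta_n^{-1/2}$ applies, since $\zeta$ is at distance $\asymp \sqrt{\beta_n}$ from both $\mathrm{spec}(S_n)$ and $\mathrm{supp}(\overbar F_n)$. Parametrising the contour in $\theta$ and integrating then reduces the estimate to a trade-off between the in-strip and out-of-strip arc contributions.

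The main obstacle is that the corollary's disk (radius $n^{-1/8}\sqrt{\beta_n}$) is strictly larger than the strip of half-width $n^{-1/4}\sqrt{\beta_n}$ in which Theorem \ref{thrm:s-s0} provides its sharp bound; for $z$ near the boundary of the corollary's disk no Cauchy circle through $z$ can remain entirely inside the good strip, and the out-of-strip contribution must be absorbed without destroying the rate. I would handle this by choosing $\rho \asymp n^{-1/8}\sqrt{\beta_n}$, which makes the Cauchy kernel $\rho^{-1} \asymp n^{1/8}\beta_n^{-1/2}$ and forces the in-strip arc to have angular measure $\asymp n^{-1/8}$; the halving of exponents from $\ell < 1/4$ in Theorem \ref{thrm:s-s0} to $\ell < 1/8$ in the corollary is exactly the loss incurred by this geometric constraint. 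Ensuring that the out-of-strip contribution does not swamp the gain is the technical crux; I would expect to control it by combining the holomorphicity of $f_n$ with a sharpened Stieltjes estimate on a thickened strip (obtainable by the same resolvent methods used for Theorem \ref{thrm:s-s0}), or alternatively by a Phragm\'en--Lindel\"of-type interpolation between the in-strip rate $n^{-\ell_0}\beta_n^{-1/2}$ and the trivial $O(\beta_n^{-1/2})$ decay. Once this balance is in hand, the bound is uniform in $z$ on the stated disk, on the probability-one event underlying Theorem \ref{thrm:s-s0}.
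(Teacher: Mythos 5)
Your plan—Cauchy's integral formula applied to $f_n = s_n - \bar s_n$—is genuinely different in form from the paper's proof, and you have correctly diagnosed the crux, but you have not closed it; as written the argument does not give the stated rate.

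The paper decomposes $\frac{d}{dz}f_n(z)$ via the auxiliary point $z' = u + i v$ with $v = n^{-1/8}\sqrt{\beta_n}$, handles $\frac{d}{dz}(s_n(z)-s_n(z'))$ and $\frac{d}{dz}(\bar s_n(z)-\bar s_n(z'))$ by eigenvalue-spacing Lipschitz bounds, and then for $\frac{d}{dz}(s_n(z')-\bar s_n(z'))$ uses the identity $\frac{1}{(\lambda-u)^2+v^2}=\frac{1}{v}\,\Im\frac{1}{\lambda-z'}$ to trade the derivative at the \emph{single} off-axis point $z'$ for the value $\frac{1}{v}\Im\big(s_n(z')-\bar s_n(z')\big)$ plus an $O(v/\beta_n^{3/2})$ error. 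This is a Poisson-kernel analogue of what you are attempting with a full contour—but it needs a bound on $|f_n|$ only at one height $v$, whereas your circle $|\zeta - z|=\rho$ visits a range of heights up to $\rho$.

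Here is where your proposal actually breaks. With $\rho\asymp n^{-1/8}\sqrt{\beta_n}$, the in-strip arc contributes $\rho^{-1}\cdot n^{-1/8}\cdot n^{-\ell_0}\beta_n^{-1/2}\asymp n^{-\ell_0}\beta_n^{-1}$, which is the right order; but the out-of-strip arc contributes $\rho^{-1}\cdot O(1)\cdot\beta_n^{-1/2}\asymp n^{1/8}\beta_n^{-1}$, which \emph{exceeds} the target $n^{-\ell}\beta_n^{-1}$ for every admissible $\ell$. The crude resolvent bound $|f_n(\zeta)|\lesssim\beta_n^{-1/2}$ cannot be used on that arc. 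You acknowledge this and float two fixes (a sharpened estimate on a thickened strip, or Phragm\'en--Lindel\"of interpolation), but neither is carried out, and neither is a corollary of Theorem~\ref{thrm:s-s0} as stated: Theorem~\ref{thrm:s-s0} only controls $|f_n|$ on disks of radius $n^{-1/4}\sqrt{\beta_n}$ about real centers, while the arc you need sits at height $\asymp n^{-1/8}\sqrt{\beta_n}$, outside every such disk. To plug the gap you would have to re-run the moment bounds of Lemmas~\ref{lem:s-Es} and~\ref{lem:Es-s0} at imaginary part $v\asymp n^{-1/8}\sqrt{\beta_n}$—this gives $|f_n(\zeta)|\lesssim n^{-\ell'}\beta_n^{-1/2}$ for $\ell'<3/8$, which does suffice—and you would also need uniformity of that almost-sure bound along the arc (a net or maximal-inequality argument, not addressed in your sketch). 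In effect, once you supply these ingredients, you have reconstructed the substance of the paper's proof: the paper's Poisson-kernel manipulation is precisely the version of your Cauchy idea that avoids ever needing an estimate off a single horizontal line.
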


 \noindent These bounds are novel in that (1) dependence on $\beta_n$ is more carefully tracked and (2) the bound applies to $z \geq 1 + (2+\eta) \sqrt{\beta_n}$  (rather than to $z$ with $\Im(z) > 1/\sqrt{m}$). (2) is critical since the proof of Theorem \ref{thrm:cosine} requires bounding $|s_n(z) - \bar s_n(z)|$ and $|\frac{d}{dz} (s_n(z) - \bar s_n(z))|$ along a contour centered on the real line. Furthermore, (2) above enables improvement of previous bounds using the concentration of the maximum eigenvalue of $S_n$. For example,
in place of deterministic bounds such as 
 \[ \phantom{\, ,} \frac{1}{n} \, \tr (S_n - z I_n)^{-1} (S_n - \bar z I_n)^{-1} \leq \Im(z)^{-2} \, , 
 \]
we will argue using Lemma \ref{lem:max_eig2} that $\frac{1}{n} \tr (S_n - z I_n)^{-1} (S_n - \bar z I_n)^{-1} \lesssim \beta_n^{-1} $ with high probability. 
As set forth in Section \ref{sec:2.2}, the entries of $X_n$ are assumed truncated and normalized as in Lemma \ref{lem:max_eig2}. We note that Theorem \ref{thrm:s-s0} and Corollary \ref{cor:s'-s0'} hold for  non-truncated noise as well; the effects of truncation and normalization may be shown to be negligible using Lemma \ref{lem:trunc}.

Let $x_k^\top$ denote the $k$th-row of $X_n$, $X_{nk}$ consist of the remaining $n-1$ rows, and $\E_k(\cdot)$ denote the conditional expectation given $\{x_{ij}, i \leq k, j \leq m\}$. Additionally, we define
\begingroup
\allowdisplaybreaks
\begin{align} 
\phantom{\,,}
s_{kk} &= \frac{1}{m} \sum_{j=1}^m |x_{kj}|^2 \, , \nonumber \\ S_{nk} &= \frac{1}{m} X_{nk} X_{nk}^\top                         \nonumber \,, \\ \phantom{\,,}
\alpha_k &= X_{nk} x_k \, ,  \nonumber\\ 
\sigma_k & =  \tr (S_n - z I_n)^{-1} - \tr(S_{nk} - z I_{n-1})^{-1}        \nonumber  \,,\\
\phantom{\,,}\gamma_k  &= (\E_k - \E_{k-1}) \sigma_k    \, ,   \nonumber \\
b_k  &= \frac{1}{1-z-\beta_n-\frac{z}{m} \tr (S_{nk} - z I_{n-1})^{-1}}                   \nonumber\,, \\ \phantom{\,,} 
\tilde b_k  &= \frac{1}{ s_{kk} - z - \frac{1}{m^2}\alpha_k^\top(S_{nk} - z I_{n-1})^{-1} \alpha_k}                   \, ,  \nonumber \\
 \bar b_n &= \frac{1}{1-z-\beta_n  - \beta_n z \E s_n(z)} \,, \nonumber \\ 
 \phantom{\,,} 
\epsilon_k & = s_{kk} - 1 - \frac{1}{m^2} \alpha_k^\top (S_{nk}  - zI_{n-1})^{-1} \alpha_k + \frac{1}{m} \tr (S_{nk} - z I_{n-1})^{-1} S_{nk} + \frac{1}{m} \,, \nonumber \\
\phantom{\,,} 
\bar \epsilon_k &= s_{kk} - 1 + \beta_n + \beta_n z \E s_n(z) - \frac{1}{m^2} \alpha_k^\top (S_{nk} - z I_{n-1})^{-1} \alpha_k  \,. \label{0010}
\end{align}
\endgroup
Standard properties of these definitions are stated below (see Theorem A.5, Lemma 8.18, and (8.4.19) of \cite{BS_SpAn}). 
\begin{lemma}\label{lem:5}
For $z \in \mathbb{C}^+$, 
\begin{align*}
\sigma_k &=   \frac{1 + \frac{1}{m^2} \alpha_k^\top (S_{nk} - z I_{n-1})^{-2} \alpha_k}{s_{kk} - z - \frac{1}{m^2}\alpha_k^\top (S_{nk}  - zI_{n-1})^{-1} \alpha_k}   \, ,   & |\sigma_k| &\leq \frac{1}{\Im(z)} \, ,  \\ |\tilde b_k| &\leq \frac{1}{\Im(z)} \, , & |\bar b_n| &\leq \frac{1}{\sqrt{  \beta_n |z| }} \, .
 \end{align*} 
Furthermore, for $\Im(z) \geq \frac{1}{\sqrt{m}}$, $|b_k| \lesssim (\beta_n |z|)^{-1/2}$.
\end{lemma}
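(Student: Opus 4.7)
The statement bundles together several standard resolvent identities and bounds (in the spirit of Section 8.4 of \cite{BS_SpAn}); my plan is to verify each piece via block-matrix algebra combined with short Stieltjes-transform estimates.

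\emph{The identity for $\sigma_k$ and the elementary bounds $|\sigma_k|, |\tilde b_k| \leq 1/\Im z$.} Permuting the $k$-th row of $X_n$ to the bottom gives the block decomposition
\[ S_n = \begin{bmatrix} S_{nk} & \frac{1}{m}\alpha_k \\ \frac{1}{m}\alpha_k^* & s_{kk} \end{bmatrix}, \]
and applying the Schur complement to $(S_n - zI_n)^{-1}$ identifies the $(k,k)$ entry as $\tilde b_k$. Differentiating the determinant identity $\det(S_n-zI_n) = \tilde b_k^{-1}\det(S_{nk}-zI_{n-1})$ in $z$ then yields the stated formula for $\sigma_k$. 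The bound $|\tilde b_k| \leq 1/\Im z$ is immediate since $\tilde b_k = e_k^*(S_n - zI)^{-1}e_k$ is the Stieltjes transform of the probability spectral measure of $S_n$ at $e_k$. For $\sigma_k$, Cauchy interlacing of eigenvalues between $S_n$ and its principal submatrix $S_{nk}$ expresses $\sigma_k$ as a telescoping sum each of whose terms is bounded by $1/\Im z$.

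\emph{The bound $|\bar b_n| \leq 1/\sqrt{\beta_n |\Im z|}$.} The plan is first to observe that substituting $\bar s_n(z)$ for $\E s_n(z)$ in the definition of $\bar b_n$ and invoking the Marchenko-Pastur self-consistent equation produces the algebraic identity $z\bar s_n(z) = 1/(1-z-\beta_n - \beta_n z \bar s_n(z))$, so $\bar b_n$ is, up to the small perturbation from $\bar s_n$ to $\E s_n$ (controlled by Theorem \ref{thrm:s-s0}), equal to $z \bar s_n(z)$. The bound then reduces to $|z \bar s_n(z)| \lesssim (\beta_n \Im z)^{-1/2}$ for bounded $|z|$, which I would obtain from the sharp Stieltjes inequality $|\bar s_n(z)|^2 \Im z \leq \Im \bar s_n(z) \leq \pi \|\rho_{\mathrm{MP}}\|_\infty$ combined with the elementary MP density bound $\|\rho_{\mathrm{MP}}\|_\infty \lesssim \beta_n^{-1/2}$ (obtained directly from the closed-form MP density $\sqrt{(b-x)(x-a)}/(2\pi\beta_n x)$ at its peak).

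\emph{The bound $|b_k| \lesssim 1/\sqrt{\beta_n |z|}$ under $\Im z \geq 1/\sqrt m$.} I expect this to be the main obstacle, both because the hypothesis $\Im z \geq 1/\sqrt m$ must be used carefully and because the conclusion is in $|z|$ rather than $|\Im z|$. My plan is the following. (i) On the high-probability event of Lemma \ref{lem:max_eig2} where $\lambda_1(S_{nk}) \leq 1 + (2+\eta)\sqrt{\beta_n}$, show that the random trace $\frac{1}{m}\tr(S_{nk} - zI_{n-1})^{-1}$ is close to its deterministic equivalent $\beta_n \bar s_n(z)$, uniformly on $\{\Im z \geq 1/\sqrt m\}$; the hypothesis $\Im z \geq 1/\sqrt m$ enters precisely here, to control the martingale-type fluctuations of the trace using the standard resolvent-concentration machinery of \cite{BS_SpAn}. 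Consequently, $b_k$ is well approximated by $z\bar s_n(z)$. (ii) Use the explicit MP formula for $\bar s_n$ to obtain $|z\bar s_n(z)| \lesssim 1/\sqrt{\beta_n |z|}$ on the region of interest, noting that $|z|$ is bounded below there. (iii) Combine (i) and (ii) to conclude.
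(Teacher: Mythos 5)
There is a genuine logical gap in your plan for the bounds on $\bar b_n$ and $b_k$: you propose to reduce them to bounds on $\bar s_n(z)$ by invoking Theorem~\ref{thrm:s-s0} and ``the standard resolvent-concentration machinery of \cite{BS_SpAn}.'' But Lemma~\ref{lem:5} is a building block for that very machinery here. In this paper, Lemma~\ref{lem:5} feeds into Lemmas~\ref{lem:s-Es} and~\ref{lem:Es-s0}, which in turn prove Theorem~\ref{thrm:s-s0}; using Theorem~\ref{thrm:s-s0} (or its ingredients) to prove Lemma~\ref{lem:5} is circular. The bounds $|\bar b_n|\le (\beta_n\Im z)^{-1/2}$ and $|b_k|\lesssim (\beta_n|z|)^{-1/2}$ must be established directly from the definitions, as deterministic (or pathwise a.s.) algebraic facts, not via closeness to a deterministic equivalent. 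Indeed, in the proof of Lemma~\ref{lem:s-Es} the bound on $|b_k|$ is used \emph{inside} an expectation to write $\E|\sigma_k b_k\varepsilon_k|^2\lesssim \frac{1}{v^2\beta_n}\E|\varepsilon_k|^2$, so a bound that only holds on a high-probability event (as in your step (i)) is not enough. The paper's source for these facts is Theorem~A.5, Lemma~8.18, and (8.4.19) of \cite{BS_SpAn}; there the estimates are obtained by writing $\bar b_n^{-1}$ and $b_k^{-1}$ explicitly in terms of the Stieltjes transform of a nonnegative measure (e.g.\ $\bar b_n^{-1}=-z\big(1+\underline{\E s}_n(z)\big)$, with $\underline{\E s}_n$ the companion transform) and exploiting the sign of $\Im\big(z\,\underline{s}(z)\big)=v\int\lambda/|\lambda-z|^2\,d\mu\ge 0$ for measures on $[0,\infty)$; the hypothesis $\Im z\ge 1/\sqrt m$ enters a purely deterministic estimate, not a concentration argument.

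There are two smaller issues. First, your derivation of the $\sigma_k$ identity via the Schur complement and $\det(S_n-zI_n)=\tilde b_k^{-1}\det(S_{nk}-zI_{n-1})$ is correct and is the standard route, and the bound $|\tilde b_k|\le 1/\Im z$ via the diagonal-resolvent interpretation is fine. But the Cauchy-interlacing ``telescoping sum'' argument you sketch for $|\sigma_k|\le 1/\Im z$ does not give the stated constant: the interlacing/counting-function argument (integration by parts against a function of modulus at most one) yields $\pi/\Im z$, and as literally described (``each of whose terms is bounded by $1/\Im z$'') it does not control the sum at all. The sharp bound $1/\Im z$ follows instead directly from the ratio formula you already derived: $|\Im(\tilde b_k^{-1})|=\Im z\,(1+\tfrac1{m^2}\alpha_k'R_k^*R_k\alpha_k)$ while the numerator satisfies $|1+\tfrac1{m^2}\alpha_k'R_k^2\alpha_k|\le 1+\tfrac1{m^2}\alpha_k'R_k^*R_k\alpha_k$ by Cauchy--Schwarz (using that $R_k$ is normal), so the ratio is at most $1/\Im z$. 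Second, the Marchenko--Pastur self-consistent equation gives $\bar s_n(z)=1/(1-z-\beta_n-\beta_n z\bar s_n(z))$, not $z\bar s_n(z)$ on the left; this algebraic slip should be corrected even though it does not affect the order of the resulting estimate.
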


In the following lemmas, we will use $u_n$ and $v_n$ to refer to the real and imaginary parts of a complex sequence $z_n$. The subscripts  will be suppressed for notational lightness. Lemmas \ref{lem:s-Es} and \ref{lem:Es-s0} respectively bound $|s_n(z) -  \E \bar s_n(z)|$ and $|\E s_n(z) - \bar s_n(z)|$ for $z$ with imaginary part at least $1/\sqrt{m}$. The proof of Theorem \ref{thrm:s-s0} extends the bound to the real axis.   
\begin{lemma} \label{lem:s-Es}
Let $z = u + i v$ be a bounded sequence where $u \geq 1 + (2+\eta)\sqrt{\beta_n}$ and  $v \geq 1/\sqrt{m}$. Then,
\[ \phantom{\, .}  \E |s_n(z) - \E s_n(z) |^2 \lesssim \frac{1}{n^2} \Big( \frac{1}{\beta_n} + \frac{1}{ v^2}  \Big) \,.
\]
\end{lemma}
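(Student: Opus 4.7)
The plan is to apply the classical row-filtration martingale decomposition for $s_n(z) - \E s_n(z)$, following Section 8.4 of \cite{BS_SpAn}, but carefully tracking the dependence of each term on $\beta_n$ and $v$ and crucially invoking Lemma \ref{lem:max_eig2} (the tail bound on $\lambda_1$) to push the usual resolvent bounds inside the expectation. Let $\gamma_k = (\E_k - \E_{k-1})\tr(S_n - z I_n)^{-1}$. Since $\tr(S_{nk}-zI_{n-1})^{-1}$ is measurable with respect to $X_{nk}$ and $x_k$ is independent of $X_{nk}$, applying $\E_k$ or $\E_{k-1}$ to it yields the same value; hence $\gamma_k = (\E_k - \E_{k-1})\sigma_k$. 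Orthogonality of martingale differences reduces the problem to showing that for each $k$,
\[
\E |\gamma_k|^2 \lesssim \frac{1}{n} \Bigl( \frac{1}{\beta_n} + \frac{1}{v^2} + \frac{1}{m v^4} \Bigr).
\]

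To bound $\E|\gamma_k|^2$, I would use the identity $\sigma_k = \tilde b_k \bigl(1 + \frac{1}{m^2}\alpha_k'(S_{nk}-zI_{n-1})^{-2}\alpha_k\bigr)$ from Lemma \ref{lem:5} and subtract the $X_{nk}$-measurable approximation $M_k = b_k \bigl(1 + \frac{1}{m}\tr((S_{nk}-zI_{n-1})^{-2}S_{nk})\bigr)$. Because $M_k$ depends on $X_{nk}$ alone and $x_k$ is independent of $X_{nk}$, $(\E_k - \E_{k-1})M_k = 0$, so $\E|\gamma_k|^2 \leq 4\E|\sigma_k - M_k|^2$. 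Writing $\sigma_k - M_k = \tilde b_k (q_k - \E[q_k|X_{nk}]) + (\tilde b_k - b_k)\E[q_k | X_{nk}]$ and using the identity $\tilde b_k - b_k = -b_k \tilde b_k (\tilde b_k^{-1} - b_k^{-1})$ reduces the problem to controlling the fluctuation of $s_{kk}$ about $1$ and of two quadratic forms $\frac{1}{m^2}\alpha_k' A \alpha_k$ about $\frac{1}{m}\tr(AS_{nk})$, where $A \in \{(S_{nk}-zI)^{-1}, (S_{nk}-zI)^{-2}\}$. These are handled conditionally on $X_{nk}$ via Lemma \ref{lem:xAx-tr}, whose bounds are sums of the form $\sum_\alpha |\lambda_\alpha(S_{nk}) - z|^{-2p}$ for $p = 1, 2$.

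The last step controls these sums in expectation, and this is where the three terms in the target bound arise. On the good event $\mathcal{E} = \{\lambda_1(S_{nk}) \leq 1 + (2+\eta/2)\sqrt{\beta_n}\}$, every eigenvalue of $S_{nk}$ lies at distance $\gtrsim \sqrt{\beta_n}$ from $z$, giving $\sum_\alpha |\lambda_\alpha - z|^{-2p} \lesssim n/\beta_n^p$ and hence the $\frac{1}{n\beta_n}$ contribution. On $\mathcal{E}^c$ only the deterministic bound $|\lambda_\alpha - z| \geq v$ is available; multiplied against the resolvent bounds $|\tilde b_k|, |\sigma_k| \leq 1/v$ from Lemma \ref{lem:5} and the tail probability $\p(\mathcal{E}^c) = O(1/n)$ from Lemma \ref{lem:max_eig2}, this produces the $\frac{1}{nv^2}$ and $\frac{1}{n m v^4}$ terms. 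Summing the per-$k$ bounds yields the claimed estimate. The hard part will be precisely this two-regime analysis: the classical estimates of \cite{BS_SpAn}, which keep the resolvent bounds outside the expectation, yield only $\E|s_n - \E s_n|^2 \lesssim \frac{1}{n^2 v^2}$, which blows up as $v \to 1/\sqrt{m}$ and is too weak to pass to the real axis in Theorem \ref{thrm:s-s0}. Extracting the sharper $\frac{1}{n^2 \beta_n}$ term requires bringing the eigenvalue concentration inside the expectation while simultaneously ensuring the rare complementary event can be absorbed into the crude $\frac{1}{v^2}$ and $\frac{1}{m v^4}$ contributions without overwhelming the leading term.
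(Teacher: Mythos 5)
Your overall strategy---the martingale (row-filtration) decomposition, orthogonality plus per-$k$ bounds, quadratic-form fluctuation via Lemma \ref{lem:xAx-tr}, and the two-regime (good/bad event) analysis of the resolvent traces using Lemma \ref{lem:max_eig2}---is exactly the one the paper uses, and the observation that the classical deterministic $1/v^2$ bounds are too weak is correct. However, the specific grouping in your decomposition of $\sigma_k - M_k$ creates a genuine gap. You write $\sigma_k - M_k = \tilde b_k(q_k - \E[q_k|X_{nk}]) + (\tilde b_k - b_k)\E[q_k|X_{nk}]$, which places $\tilde b_k$ in front of the quadratic-form fluctuation. The only deterministic bound available is $|\tilde b_k| \leq 1/v$. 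The paper instead adds and subtracts $b_k q_k$, giving $\sigma_k - M_k = b_k(q_k - \E[q_k|X_{nk}]) + (\tilde b_k - b_k)q_k$, which places $b_k$ in front of the fluctuation, and $b_k$ enjoys the sharper bound $|b_k| \lesssim 1/\sqrt{\beta_n|z|}$ (Lemma \ref{lem:5}, valid for $\Im(z) \geq 1/\sqrt{m}$). The factor $\sqrt{\beta_n}/v$ by which your bound is weaker costs a full power of $n$ at the extreme $v = 1/\sqrt{m}$: tracking your bad-event contribution for this term gives a bound of order $1/(m^2 v^6)$, which is $n/(mv^2)$ times larger than the target term $1/(nmv^4)$, and $n/(mv^2)$ can be as large as $n$. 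Concretely, at $v = 1/\sqrt{m}$ your term $1/(m^2 v^6) = m$, whereas the target $\E|\gamma_k|^2 \lesssim \frac{1}{n}\bigl(\frac{1}{\beta_n} + \frac{1}{v^2} + \frac{1}{mv^4}\bigr) \asymp m/n$.

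A secondary issue in your second term $(\tilde b_k - b_k)\E[q_k|X_{nk}]$: the factor $\E[q_k|X_{nk}] = 1 + \frac{1}{m}\tr\bigl(S_{nk}(S_{nk}-zI_{n-1})^{-2}\bigr)$ is $O(1)$ on the good event but can be of order $\beta_n/v^2$ on its complement. Controlling $\E|(\tilde b_k - b_k)\E[q_k|X_{nk}]|^2$ would then require higher moments of $\varepsilon_k$ or a further event split, which your plan doesn't supply. The paper's corresponding term $(\tilde b_k - b_k)q_k = -\sigma_k b_k \varepsilon_k$ avoids this because $\sigma_k$ carries the deterministic bound $|\sigma_k| \leq 1/v$ and no residual trace factor appears. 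The fix is straightforward: replace the split by adding and subtracting $b_k q_k$ rather than $\tilde b_k\E[q_k|X_{nk}]$, so that the $1/\sqrt{\beta_n}$-bounded factor $b_k$ always multiplies the fluctuating quadratic form and the $1/v$-bounded factor $\sigma_k$ only multiplies $b_k\varepsilon_k$.
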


\begin{proof} Begin with a standard decomposition of $s_n(z) - \E s_n(z)$: 
\begin{align*} s_n(z) - \E s_n(z) & = \frac{1}{n}\sum_{k=1}^n  (\E_k - \E_{k-1})\big( \tr(S_n - z I_n)^{-1} - \tr(S_{nk} - z I_{n-1})^{-1} \big)  \\
& =   \frac{1}{n}\sum_{k=1}^n \gamma_k  \, .
\end{align*}
As $\{\gamma_k\}_{k=1}^n$ forms a martingale difference sequence, the Burkholder inequality (Lemma 2.12 of \cite{BS_SpAn}) yields
\[ \phantom{\,.}   \E |s_n(z) - \E s_n(z) |^2  \lesssim \frac{1}{n^2} \sum_{k=1}^n \E |  \gamma_k |^2       \,. \]
This martingale decomposition, together with the bound $|\gamma_k| \leq 2/v$, appear in proofs of convergence to the Marchenko-Pastur distribution as $\beta_n \rightarrow \beta > 0$ \cite{A11,BZ08}. Such proofs consider $z$ fixed. Here and in \cite{BHZ12, BS_SpAn}, as $v$ may decay rapidly, a tighter bound on the moments of $\gamma_k$ is needed.  Using the identity
\begin{align} 
 \phantom{\,.} S_{nk} (S_{nk} - zI_{n-1})^{-1} = (S_{nk} - zI_{n-1})^{-1} S_{nk}  = I_{n-1} + z (S_{nk} - zI_{n-1})^{-1} \label{S(S-zI)^-1} \,,
\end{align} we obtain 
\begin{align*} & \phantom{\,,} \tilde b_k^{-1} - b_k^{-1} = \epsilon_k \, , & \tilde b_k  = \frac{b_k}{1 + b_k\epsilon_k} \,.
\end{align*}
Thus,
\begin{align*} \sigma_k &= \frac{b_k}{1 + b_k\epsilon_k}\Big(1 + \frac{1}{m^2} \alpha_k^\top(S_{nk} - z I_{n-1})^{-2} \alpha_k \Big)    = -\sigma_k b_k  \epsilon_k + b_k \Big(1 + \frac{1}{m^2} \alpha_k^\top (S_{nk} - z I_{n-1})^{-2} \alpha_k \Big)  
\end{align*}
and
\begin{align} \phantom{\,,} \gamma_k &= - (\E_k - \E_{k-1})  \sigma_k b_k\epsilon_k +  \frac{1}{m^2} (\E_k - \E_{k-1}) b_k \alpha_k^\top (S_{nk} - zI_{n-1})^{-2} \alpha_k  \nonumber  \\
&= - (\E_k - \E_{k-1}) \sigma_k b_k \epsilon_k +  \frac{1}{m^2} \E_k  b_k \big[ \alpha_k^\top (S_{nk} - zI_{n-1})^{-2} \alpha_k - \tr(X_{nk}^\top (S_{nk} - zI_{n-1})^{-2} X_{nk} ) \big]    \,.     \label{gamma_k}
\end{align}
Here, we have used that  $b_{k}$ does not depend on $x_k$, so $(\E_k - \E_{k-1}) b_k = 0$. By Lemmas \ref{lem:5} and \ref{lem:xAx-tr}, 
\begin{align}  & \frac{1}{m^4} \E b_k^2 \big| \alpha_k^\top (S_{nk}  -\,    z I_{n-1}  )^{-2} \alpha_k  -   \, \tr(X_{nk}^\top (S_{nk} - zI_{n-1})^{-2} X_{nk} )  \big|^2
  \\ \lesssim  & \, \frac{1}{m^4 \beta_n} \, \E   \tr \, X_{nk}^\top (S_{nk} - zI_{n-1})^{-2} X_{nk}  (X_{nk}^\top (S_{nk} - \bar z I_{n-1})^{-2} X_{nk} )^\top  \nonumber \\
   = &  \,   \frac{1}{n m } \E \tr \,  S_{nk} (S_{nk} - zI_{n-1})^{-2} S_{nk} (S_{nk} - \bar zI_{n-1})^{-2}   =  \frac{1}{ m } \E \bigg( \frac{1}{n} \sum_{i=1}^{n-1}   \frac{\lambda_{k,i}^2}{|\lambda_{k,i} - z|^4}  \bigg) \,, \label{957}
\end{align}
where $\lambda_{k,1} \geq \cdots \geq \lambda_{k,n-1}$ denote the eigenvalues of $S_{nk}$. By (\ref{111}) and a union bound, 
\[  \phantom{\,,} \Pr \big(\lambda_{k,1} \geq 1 + (2+\eta/2)\sqrt{\beta_n} \text{ for some } k \big) \lesssim \frac{1}{n} \,.
\]
On the complement of this event, the  integrand of (\ref{957}) is bounded by $16|z|/(\eta \sqrt{\beta_n})^4$. In addition, 
\begin{align*} &  \frac{\lambda_{k,i}^2}{|\lambda_{k,i} - z|^2} \leq 1 + \frac{u^2}{v^2} \, , &   \frac{\lambda_{k,i}^2}{|\lambda_{k,i} - z|^4}  \leq \frac{|z|^2}{v^4} \,.
\end{align*}
Therefore,
\begin{align}
 \phantom{\,.} \frac{1}{m^4} \E b_k^2 \big| \alpha_k^\top (S_{nk} - zI_{n-1})^{-2} \alpha_k -  \tr(X_{nk}^\top (S_{nk} - zI_{n-1})^{-2} X_{nk} )  \big|^2 \lesssim \frac{1}{m} \Big(\frac{1}{\beta_n^2} + \frac{1}{n v^4} \Big) \,.
\end{align}
Note that \cite{BS_SpAn} bounds terms such as $(\ref{957})$ deterministically.
Similarly,
\begin{align} \phantom{\,.}  \E | \epsilon_k|^2 & \lesssim \E  |s_{kk} - 1|^2  + \frac{1}{m^2}  +  \frac{1}{m^4} \E |\alpha_k^\top (S_{nk} - zI_{n-1})^{-1} \alpha_k -  \tr(X_{nk}^\top (S_{nk} - zI_{n-1})^{-1} X_{nk} )  |^2 \nonumber \\
& \lesssim \frac{1}{m}  +  \frac{1}{m^2} \E \tr \, S_{nk} (S_{nk} - zI_{n-1})^{-1} S_{nk} (S_{nk} - \bar{z} I_{n-1})^{-1}       \lesssim  \frac{1}{m} \Big(1 + \frac{1}{m v^2} \Big) \label{eps_bound} \,.
\end{align} 
By Lemma \ref{lem:5} and (\ref{eps_bound}),
\begin{align}
   \phantom{\,.}  \E  |\sigma_k b_k \epsilon_k|^2 & \lesssim \frac{1}{v^2 \beta_n}  \E | \epsilon_k|^2 \lesssim \frac{1}{n v^2}\,. \label{faqqot}
\end{align}
Thus, using (\ref{gamma_k}) - (\ref{faqqot}) and $v \geq 1 /\sqrt{m}$,
\begin{align}\phantom{\,.}  \E |s_n(z) - \E s_n(z) |^2 & \lesssim \frac{1}{n^2} \sum_{k=1}^n  \E |  \gamma_k |^2   \nonumber  \lesssim \frac{1}{n^2} \Big( \frac{1}{\beta_n} + \frac{1}{ v^2} \Big)  \,.
\end{align}
\end{proof}

\begin{lemma} \label{lem:Es-s0}
Let $z = u + i v$ be a bounded sequence where $u \geq 1 + (2+\eta)\sqrt{\beta_n}$ and  $v \geq 1/\sqrt{m}$. Then,
\[ \phantom{\, .}  | \E s_n(z) - \bar s_n(z) | \lesssim \frac{1}{nv} \,.
\]
\end{lemma}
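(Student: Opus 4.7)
The plan is to adapt the classical Bai--Silverstein approach: derive a perturbed quadratic equation that $\E s_n(z)$ approximately satisfies, then compare with the exact Marchenko--Pastur quadratic for $\bar s_n(z)$. By the Schur-complement identity, $\bigl((S_n - zI_n)^{-1}\bigr)_{kk} = \tilde b_k$, so $s_n(z) = \frac{1}{n}\sum_k \tilde b_k$. The defining relations in (\ref{0010}) combined with identity (\ref{S(S-zI)^-1}) yield $\tilde b_k^{-1} - \bar b_n^{-1} = \bar \varepsilon_k$, equivalently $\tilde b_k = \bar b_n - \bar b_n \bar\varepsilon_k \tilde b_k$. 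Since $\bar b_n$ is deterministic, taking expectations and averaging over $k$ gives
\[
\E s_n(z) = \bar b_n - \bar b_n R_n, \qquad R_n := \frac{1}{n}\sum_{k=1}^n \E[\bar\varepsilon_k \tilde b_k].
\]
Multiplying by $\bar b_n^{-1} = 1 - z - \beta_n - \beta_n z \E s_n(z)$ and rearranging produces the perturbed quadratic $\beta_n z (\E s_n)^2 + (z + \beta_n - 1)\E s_n + 1 = R_n$. Subtracting the exact equation $\beta_n z \bar s_n^2 + (z + \beta_n - 1)\bar s_n + 1 = 0$ and factoring,
\[
(\E s_n - \bar s_n)\bigl[\beta_n z (\E s_n + \bar s_n) + (z + \beta_n - 1)\bigr] = R_n.
\]

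To bound $R_n$, I would apply $\tilde b_k = \bar b_n - \bar b_n \bar\varepsilon_k \tilde b_k$ once more inside each expectation, giving $\E[\bar\varepsilon_k \tilde b_k] = \bar b_n \E\bar\varepsilon_k - \bar b_n \E[\bar\varepsilon_k^2 \tilde b_k]$. For the first-moment piece, conditioning on $X_{nk}$ and using $\E s_{kk} = 1$ together with $\E\bigl[\frac{1}{m^2}\alpha_k'(S_{nk} - zI_{n-1})^{-1}\alpha_k \mid X_{nk}\bigr] = \frac{n-1}{m}(1 + z s_{n,k}(z))$---derived via identity (\ref{S(S-zI)^-1}) with $s_{n,k}$ the Stieltjes transform of $S_{nk}$---reduces everything to controlling $|\E s_n - \tfrac{n-1}{n}\E s_{n,k}|$. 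The interlacing identity $n s_n - (n-1) s_{n,k} = \sigma_k$ combined with $|\sigma_k| \leq 1/v$ (Lemma \ref{lem:5}) makes this at most $1/(nv)$, so $|\E\bar\varepsilon_k| \lesssim 1/m + \beta_n/(nv) \lesssim \beta_n/(nv)$. For the second-moment piece, the same quadratic-form / truncation argument used in the proof of Lemma \ref{lem:s-Es}---Lemma \ref{lem:xAx-tr} conditional on $X_{nk}$, with Lemma \ref{lem:max_eig2} restricting to the high-probability event $\lambda_{k,1} \leq 1 + (2+\eta/2)\sqrt{\beta_n}$---yields $\E|\bar\varepsilon_k|^2 \lesssim 1/m + 1/(m^2 v^2)$. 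Combining with $|\tilde b_k| \leq 1/v$ and $|\bar b_n| \lesssim \beta_n^{-1/2}$ (valid in this regime, since $\bar b_n$ is close to the Marchenko--Pastur fixed point, cf.\ Lemma \ref{lem:s_n bound}) then gives $|R_n| \lesssim \sqrt{\beta_n}/(nv)$.

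Finally, I would bound the bracketed factor from below. Direct computation from the explicit form of $\bar s_n$ shows that $2\beta_n z \bar s_n + (z + \beta_n - 1)$ equals a branch of $\sqrt{(z - 1 - \beta_n)^2 - 4\beta_n}$, and the hypothesis $\Re z \geq 1 + (2+\eta)\sqrt{\beta_n}$ makes this square root $\gtrsim \sqrt{\beta_n}$ in magnitude. Writing
\[
\beta_n z (\E s_n + \bar s_n) + (z + \beta_n - 1) = \bigl[2\beta_n z \bar s_n + (z + \beta_n - 1)\bigr] + \beta_n z (\E s_n - \bar s_n),
\]
a bootstrap continuity argument in $v$ (trivial for large $v$; propagated continuously downward to $v \geq 1/\sqrt{m}$) closes the self-reference and preserves the $\sqrt{\beta_n}$ lower bound. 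Dividing yields $|\E s_n(z) - \bar s_n(z)| \lesssim |R_n|/\sqrt{\beta_n} \lesssim 1/(nv)$, as required. The main obstacle is this stability step: because $\bar s_n \asymp \beta_n^{-1/2}$ near the bulk edge, both the bracketed factor and the error $|\E s_n - \bar s_n|$ naturally live on the same scale $\sqrt{\beta_n}$, so a direct bound is circular; the continuity bootstrap in $v$ is the device that breaks the loop.
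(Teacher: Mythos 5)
Your argument follows the paper's decomposition $s_n(z) = \frac{1}{n}\sum_k \tilde b_k$, the same moment bounds $|\E\bar\varepsilon_k| \lesssim 1/(mv)$ and $\E|\bar\varepsilon_k|^2 \lesssim 1/m + 1/(m^2v^2)$, and lands on the same intermediate estimate---your $|R_n| \lesssim \sqrt{\beta_n}/(nv)$ is equivalent to the paper's $|\delta_n| \lesssim 1/(nv)$ since $\delta_n = -\bar b_n R_n$ and $|\bar b_n| \lesssim \beta_n^{-1/2}$. The genuine difference lies in the stability step that converts this bound on the defect of the fixed-point equation into a bound on $|\E s_n - \bar s_n|$. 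You factor the difference of quadratics, $(\E s_n - \bar s_n)\bigl[\beta_n z(\E s_n + \bar s_n) + (z+\beta_n-1)\bigr] = R_n$, identify $2\beta_n z\bar s_n + (z+\beta_n-1) = \sqrt{(z+\beta_n-1)^2 - 4\beta_n z} \gtrsim \sqrt{\beta_n}$, and close the self-reference with a continuity bootstrap in $v$. The paper instead solves the perturbed quadratic (\ref{0100}) in closed form---picking the branch with the correct decay as $v \to \infty$---and then bounds $|\E s_n - \bar s_n|$ by a direct algebraic comparison of the two roots (see (\ref{993})). Both are sound; the paper's explicit root comparison sidesteps the need for any topological bootstrap, while your factor-and-bootstrap is the more standard pattern in the random matrix literature and makes the $\sqrt{\beta_n}$-scale degeneracy near the bulk edge (which you correctly flag as the central obstruction) more visible. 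One small caution: for the bootstrap to close you must verify that the a posteriori bound $1/(nv)$ sits strictly inside the a priori smallness threshold $\sim \sqrt{\beta_n}/(\beta_n|z|)$ throughout $v \geq 1/\sqrt{m}$; this reduces to $\sqrt{n} \gtrsim |z|$, which holds eventually, so the argument is valid but the constant-tracking is worth writing out.
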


\begin{proof}
Recall definitions (\ref{0010}). We have
\begin{align*} & \phantom{\,,} \tilde b_k^{-1} - \bar b_n^{-1} = \bar \epsilon_k\, , & \tilde b_k - \bar b_n  = -\bar b_n^2 \bar \epsilon_k + \bar b_n^2 \tilde b_k \bar \epsilon_k^2 \,.
\end{align*}
Together with Theorem A.4 of \cite{BS_SpAn}, this leads to the following decomposition of $s_n(z)$: 
\begin{align}\phantom{\,.} s_n(z) &= \frac{1}{n}\sum_{k=1}^n \tilde b_k =  \bar b_n + \frac{1}{n} \sum_{k=1}^n (   -\bar b_n^2 \bar \epsilon_k + \bar b_n^2 \tilde b_k \bar \epsilon_k^2 ) \label{234} \,.
\end{align}
(8.3.12) of \cite{BS_SpAn} proves the moment bound $| \E \bar \epsilon_k |  \leq 1/(mv)$. The second moment of $\bar \epsilon_k$ is bounded as follows:  
\begin{align} \phantom{\,.} \E | \bar \epsilon_k|^2  &  \lesssim  \E | \bar \epsilon_k - \E(\bar \epsilon_k | X_{nk}) |^2 + \E | \E ( \bar \epsilon_k | X_{nk} ) - \E \bar \epsilon_k |^2 + |  \E \bar \epsilon_k |^2   \, ,
\end{align}
A bound on $\E | \bar \epsilon_k - \E(\bar \epsilon_k | X_{nk}) |^2 =  \E | \epsilon_k - m^{-1} |^2 \leq 2(\E |\epsilon_k|^2 + m^{-2})$ is given by (\ref{eps_bound}). By (\ref{S(S-zI)^-1}) and Lemma \ref{lem:5},
\begin{align}\nonumber
    \E | \E ( \bar \epsilon_k | X_{nk} ) - \E \bar \epsilon_k |^2 & =  \frac{1}{m^2} \E \big| \tr (S_{nk} - zI_{n-1})^{-1} S_{nk} - \E \tr (S_{nk} - zI_{n-1})^{-1} S_{nk} \big|^2 \\ & = \frac{|z|^2}{m^2} \E \big| \tr (S_{nk} - zI_{n-1})^{-1} - \E \tr (S_{nk} - zI_{n-1})^{-1} \big|^2 \nonumber \\
    & \leq |z|^2 \beta_n^2 \E | s_n(z) - \E s_n(z)|^2 + \frac{2|z|^2}{m^2 v^2} \,.   \label{994}
\end{align}
By (\ref{234}) -  (\ref{994}) and Lemma \ref{lem:s-Es},
\begin{align} \nonumber
    \phantom{\, .} | \E s_n(z) - \bar b_n| & \lesssim \frac{1}{n \beta_n} \sum_{k=1}^n | \E \bar \epsilon_k|  + \frac{1}{n v \beta_n} \sum_{k=1}^n \E |\bar \epsilon_k|^2  \\ & \lesssim \frac{1}{n v} + \frac{1}{v \beta_n}  \Big( \frac{1}{m} +\frac{1}{m^2 v^2}  + \beta_n^2 \E |s_n(z) - \E s_n(z) |^2\Big)    \lesssim \frac{1}{n v} \,.
    \label{998}
\end{align}
The above bound describes an equation that is quadratic in $\E s_n(z)$:
\begin{align} \phantom{\,,} \E s_n(z) -  \frac{1}{1-z-\beta_n  - \beta_n z \E s_n(z)} = \delta_n \, , \label{0100}
\end{align}
where $\delta_n \coloneqq \E s_n(z) -\bar b_n$ obeys $|\delta_n|  \lesssim 1/(nv) \leq 1 /\sqrt{n \beta_n}$ (note that $\bar s_n(z)$ is a solution of the related equation $s = (1-z-\beta_n z s)^{-1})$. 
By (3.3.8)-(3.3.13) of \cite{BS_SpAn}, the unique solution satisfying the requirement that $ \Im(\E s_n(z)) > 0$ (recall that $z \in \mathbb{C}^+$) is given by
\[ \phantom{\,.} \E s_n(z) = \frac{1}{2 \beta_n z} \Big (1 - z - \beta_n + \beta_n z \delta_n + \sqrt{(z + \beta_n -1 + \beta_n z \delta_n)^2 - 4 \beta_n z} \,\Big) \,.
\]
Thus, 
\begin{align} \phantom{\,.} |\E s_n(z) - \bar s_n(z)| & =  \frac{1}{2} \bigg| \delta_n + \frac{2  (z+\beta_n-1)\delta_n  + \beta_n z \delta_n^2}{\sqrt{(z+\beta_n-1)^2-4\beta_n z} + \sqrt{(z+\beta_n-1 +\beta_n z \delta_n)^2-4\beta_n z} } \bigg| \nonumber \\ 
& \leq \frac{|\delta_n|}{2} \bigg( 1 + \frac{|2(z+\beta_n-1) + \beta_n z \delta_n|}{|\sqrt{(z+\beta_n-1)^2-4\beta_n z} + \sqrt{(z+\beta_n-1 + \beta_n z \delta_n)^2-4\beta_n z}|} \bigg) \lesssim \frac{1}{nv} \label{993} \,.
\end{align}
\end{proof}

\begin{proof} [Proof of Theorem \ref{thrm:s-s0}]
Let $z$ satisfy $|z-u| \leq n^{-1/4} \sqrt{\beta_n}$ and $z' = u +  i n^{-1/4}\sqrt{\beta_n}$. We will decompose $|s_n(z) - \bar s_n(z)|$ as
\begin{align*}  |s_n(z) - \bar s_n(z)|  & \leq |s_n(z) - s_n(z') | 
 + | s_n(z')  - \bar s_n(z') |   +  |  \bar s_n(z)   - \bar s_n(z') | 
\end{align*}
and bound each term. The first is bounded in the proof of Lemma \ref{lem123}. By Lemmas 
\ref{lem:s-Es} and \ref{lem:Es-s0}, 
\begin{align} \phantom{\,.} | s_n(z') - \bar s_n(z') | \lesssim  \frac{1}{n^{\ell} \sqrt{\beta_n}} \,, \label{992} 
\end{align}
almost surely. As $u$ is bounded away from the support of $\overbar F_n$ by a constant multiple of $\sqrt{\beta_n}$ (\ref{1000}),
\begin{align} \phantom{\,.} | \bar s_n(z) - \bar s_n(z')| & \leq \int   \Big| \frac{1}{\lambda -z} - \frac{1}{\lambda - z'} \Big| d \overbar F_n(\lambda)  \nonumber = \int    \frac{|z-z'|}{|\lambda - z||\lambda - z'|} d \overbar F_n(\lambda) \nonumber \\
& \lesssim \frac{1}{n^{1/4} \sqrt{ \beta_n}} \, . \label{996}
\end{align} 
\end{proof}

\begin{proof}[Proof of Corollary \ref{cor:s'-s0'}]
Consider $z$ satisfying $|z - u| \leq n^{-1/8} \sqrt{\beta_n}$. Let $v = n^{-1/8} \sqrt{\beta_n}$ and $z' = u +  i v$. We will decompose $|\frac{d}{dz} (s_n(z) - \bar s_n(z) ) |$ as
\begin{align}
 \Big|\frac{d}{dz} \big(s_n(z) - \bar s_n(z) \big) \Big| & \leq \Big|\frac{d}{dz} \big(s_n(z) -  s_n(z') \big) \Big| + \Big|\frac{d}{dz} \big(s_n(z') - \bar s_n(z') \big) \Big|    \nonumber  + \Big|\frac{d}{dz} \big(\bar s_n(z) - \bar s_n(z') \big) \Big| \nonumber
\end{align}
and bound each term. For $1 \leq \alpha \leq n$, by the spacing bound (\ref{419}), 
\[ \phantom{\,.} \bigg| \frac{1}{(\lambda_\alpha - z)^2} - \frac{1}{(\lambda_\alpha - z')^2} \bigg| = \frac{|z-z'||2\lambda_\alpha -z-z'|}{|\lambda_\alpha-z|^2 |\lambda_\alpha - z'|^2} \leq \frac{32}{\eta^3 n^{1/8} \beta_n} \,.
\]
Thus, almost surely,
\begin{align}
     \Big|\frac{d}{dz} \big(s_n(z) -  s_n(z') \big) \Big| & \leq \frac{1}{n}  \sum_{\alpha = 1}^n \bigg| \frac{1}{(\lambda_\alpha - z)^2} - \frac{1}{(\lambda_\alpha - z')^2} \bigg|  \lesssim \frac{1}{n^{1/8} \beta_n} \, . 
\end{align}
Note that 
\begin{align*}    &  \bigg|\frac{1}{(\lambda - z')^2} - \frac{1}{(\lambda - u)^2+v^2} \bigg| \leq \frac{2v}{ |\lambda - u|^3}  \,, &  \frac{1}{(\lambda - u)^2 + v^2} = \Im \Big( \frac{1}{v(\lambda - z)} \Big) \,.
\end{align*}
Hence, 
 \begin{align*} \Big|\frac{d}{dz} \big(s_n(z') - \bar s_n(z') \big) \Big| &= \bigg| \int \frac{1}{(\lambda - z')^2} d F_n(\lambda) -  \int \frac{1}{(\lambda - z')^2} d \overbar F_n(\lambda)  \bigg| \\
& \leq \bigg| \int \frac{1}{(\lambda - z')^2} - \frac{1}{(\lambda - u)^2 + v^2} (dF_n - d \overbar F_n)(\lambda) \bigg| + \frac{1}{v} \bigg| \int \frac{1}{\lambda - z'} (dF_n - d \overbar F_n)(\lambda)  \bigg| \\
& \leq 2 v \int  \frac{1}{|\lambda - u|^3} (dF_n + d \overbar F_n)(\lambda) + \frac{1}{v} | s_n(z') 
- \bar s_n(z')| \, . 
\end{align*}
We bound the first term above as in Lemma \ref{lem:s_n bound} and the second using Theorem \ref{thrm:s-s0}: almost surely,
\begin{align}  \phantom{\,.} \Big|\frac{d}{dz} \big(s_n(z') - \bar s_n(z') \big) \Big| \lesssim \frac{v}{ \beta_n^{3/2}} + \frac{1}{n^{\ell+1/8} v \sqrt{\beta_n}} \,. 
\end{align} 
Finally, by (\ref{1000}),
\begin{align}
    \Big|\frac{d}{dz} \big(\bar s_n(z) - \bar s_n(z') \big) \Big| & \leq \int \bigg|\frac{1}{(\lambda - z)^2} - \frac{1}{(\lambda - z')^2} \bigg|  d \overbar F_n(\lambda)  \lesssim \frac{1}{n^{1/8} \beta_n} \, . 
\end{align}
\end{proof}

\subsection{Acknowledgements}

The author is grateful to David Donoho for continuous encouragement and detailed comments on drafts of this paper.  The author would also like to thank the anonymous referees for their insightful suggestions and comments. This work was partially supported by NSF DMS grants 1407813, 1418362, and 1811614.

\appendix
\section{Appendix}

\begin{lemma} \label{lem:max_eig} (Theorem 1 of \cite{CP12}) 
Let $X_n = (x_{ij}: 1 \leq i \leq n, 1 \leq j \leq m)$ be an array of i.i.d.\ real-valued random variables with $\E x_{11} = 0$, $\E x_{11}^2 = 1$, and $\E x_{11}^4 < \infty$. Suppose that as $n \rightarrow \infty$, $\beta_n = n/m_n \rightarrow 0$. Define
\[\phantom{\,.} A_n = \frac{1}{2 \sqrt{\beta_n}} \Big(\frac{1}{m}X_n X_n^\top -  I_n \Big) \, . \]
Then,
\[
\phantom{\,,} \lambda_{max}(A_n) \xrightarrow{a.s.}  1\,,
\]
where $\lambda_{max}(A_n)$ represents the largest eigenvalue of $A_n$.
\end{lemma}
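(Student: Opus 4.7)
The plan is to reduce Lemma \ref{lem:max_eig} to an edge-of-spectrum result for a centered Wishart matrix at the semicircle scaling. I would start from the algebraic rewriting
\[ A_n = \frac{1}{2\sqrt{n\, m_n}}\,\bigl(X_n X_n' - m_n I_n\bigr), \]
which shows that $2 A_n$ is the standard centered-Wishart matrix whose empirical spectral distribution, under $\beta_n \to 0$, converges to the Wigner semicircle law on $[-2,2]$. Equivalently, the ESD of $A_n$ converges to the semicircle on $[-1,1]$. This ESD convergence is established by the moment method, verifying $\frac{1}{n}\E\tr(2A_n)^{2k} \to C_k$ (the $k$-th Catalan number) for each fixed $k$ via the classical path-counting bijection with non-crossing pair partitions, with non-paired and crossing graphs producing terms of order $\beta_n^{1/2}$ and smaller. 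Convergence of the ESD already yields $\liminf_n \lambda_{\max}(A_n) \geq 1$ almost surely, since every right-neighborhood of $1$ carries positive semicircle mass.

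The harder direction is the matching upper bound $\limsup_n \lambda_{\max}(A_n) \leq 1$ almost surely. For this I would use the high-moment method in the Bai--Yin tradition: fix $\varepsilon > 0$, let $k = k_n \asymp \log n$, and apply Markov,
\[ \p\bigl(\lambda_{\max}(A_n) \geq 1+\varepsilon\bigr) \leq (1+\varepsilon)^{-2k}\,\E\tr A_n^{2k}. \]
The trace expands as a sum over closed length-$2k$ walks in the bipartite index space on $\{1,\dots,n\}\cup\{1,\dots,m_n\}$, with each factor $(X_n X_n' - m_n I_n)_{ij} = \sum_\alpha (X_{i\alpha}X_{j\alpha} - \delta_{ij})$ a centered sum. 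Careful enumeration shows that the dominating contributions come from non-crossing pair partitions and amount to $n\,C_k\,(1+o(1))^{2k}$, so that $\E\tr A_n^{2k}\lesssim n$ uniformly for $k$ at the stated rate. Combined with Markov and Borel--Cantelli, this finishes the argument.

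The main obstacle will be the combinatorial bookkeeping in the moment expansion: one must track the $\beta_n$-dependence for every graph shape that arises and verify that the non-pair contributions (where the finite fourth-moment hypothesis enters, for example through diagonal terms $X_{i\alpha}^2 - 1$) and the higher-multiplicity pairings remain uniformly subleading as $k = k_n \to \infty$. The standard remedy is a preliminary truncation and re-centering of the entries of $X_n$ at a level $n^{1/4}$ or slower, after which all higher moments are effectively bounded and the paths of multiplicity larger than two contribute only a lower-order correction; one then adapts the Bai--Yin path-counting arguments to the asymmetric regime $\beta_n \to 0$, along the lines of \cite{CP12}.
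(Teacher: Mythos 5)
The paper does not supply its own proof of this lemma; it is quoted directly as Theorem~1 of Chen and Pan \cite{CP12}, so there is no in-paper argument to compare against. That said, your sketch is an accurate reconstruction of the strategy that \cite{CP12} actually uses: truncation plus the Bai--Yin high-moment method with $k \asymp \log n$, adapted to the asymmetric regime $\beta_n \to 0$, together with the semicircle ESD convergence for the lower bound. One detail worth tightening: the truncation threshold in \cite{CP12} is $\delta_n (n m_n)^{1/4}$ with $\delta_n \to 0$ (as recorded in Lemma~\ref{lem:max_eig2} of this paper), which for $m_n \gg n$ is substantially \emph{larger} than $n^{1/4}$; truncating as low as $n^{1/4}$ would not be consistent with $\p(X_n \neq \widehat X_n \text{ i.o.}) = 0$ under only a fourth-moment assumption, and the companion estimates $|\E \widehat X_{11}| \lesssim (nm)^{-3/4}$, $|\sigma^2-1| = o((nm)^{-1/2})$ depend on that choice. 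Apart from this, the proposal correctly identifies the dominant non-crossing pair-partition contribution, the role of the finite fourth moment in controlling diagonal and higher-multiplicity terms, and the Markov/Borel--Cantelli closing step, so it matches the argument in the cited reference in all essentials.
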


\begin{lemma} \label{lem:max_eig2} (Theorem 2 of \cite{CP12})
In the setting of Lemma \ref{lem:max_eig}, let  $\widehat X_n =(\hat x_{ij}: 1 \leq i \leq n, 1 \leq j \leq m)$ and  $Y_n  =(y_{ij}: 1 \leq i \leq n, 1 \leq j \leq m) $ denote $X_n$ after truncation and normalization, respectively:
\begin{align*} \phantom{\,, } & \hat x_{ij} = x_{ij} I(|x_{ij}| \leq \delta_n (n m)^{1/4} ) \,, && y_{ij} = \frac{\hat x_{ij} - \E \hat x_{11}}{\nu}  \,, \end{align*}
where $\nu^2 = \E (\hat x_{11} - \E \hat x_{11})^2$ and $\delta_n$ is a sequence constructed in Section 2 of \cite{CP12}, satisfying $\delta_n \rightarrow 0$ and $\delta_n (nm)^{1/4} \rightarrow \infty$. Define
\[\phantom{\,.} \widetilde A_n = \frac{1}{2 \sqrt{\beta_n}} \Big(\frac{1}{m} Y_n Y_n^\top -  I_n \Big) \, . \]
Then, for any $\eta, \ell > 0$,
\[ \phantom{\,.} \Pr(\lambda_{max}(\widetilde A_n) \geq 1 + \eta) = o(n^{-\ell}) \, . 
\]
Furthermore, $\Pr(X_n \neq \widehat X_n \,\, i.o.) = 0$, 
\begin{align*}
  &  | \E \hat x_{11} | \lesssim \frac{1}{(n m )^{3/4}} \,, && |\nu^2 - 1| = o \Big(\frac{1}{\sqrt{n m}} \Big) \, .  
\end{align*}
\end{lemma}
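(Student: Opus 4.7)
This lemma is quoted as Theorem 2 of \cite{CP12}, so the proof in the present paper is really an invocation of the reference. I nonetheless sketch the strategy one would follow to establish it from scratch.

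The three bookkeeping statements---the almost-sure eventual equality of $X_n$ and $\widehat X_n$, the mean bound $|\E \widehat X_{11}| \lesssim (nm)^{-3/4}$, and the variance bound $|\sigma^2 - 1| = o((nm)^{-1/2})$---are elementary truncation identities. For the first, Borel--Cantelli applied to $\{\exists\, i,j : |X_{ij}| > \delta_n(nm)^{1/4}\}$ reduces matters to summability of $nm \cdot \p(|X_{11}| > \delta_n(nm)^{1/4})$, which follows from $\E X_{11}^4 < \infty$ and a sufficiently slow decay of $\delta_n$ (constructed explicitly in Section 2 of \cite{CP12}). For the mean, $\E X_{11} = 0$ gives $|\E \widehat X_{11}| = |\E X_{11} \mathbf{1}\{|X_{11}| > \delta_n(nm)^{1/4}\}|$, and Cauchy--Schwarz with the fourth moment yields the stated rate. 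The variance calculation is analogous, splitting $\E \widehat X_{11}^2$ into $\E X_{11}^2$ minus the truncated tail.

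The substantive content is the polynomial-decay tail bound $\p(\lambda_{max}(\widetilde A_n) \geq 1+\varepsilon) = o(n^{-\ell})$, valid for arbitrary $\ell > 0$. My plan is the trace moment method: for even integer $2k_n$,
\begin{equation*}
\p\bigl(\lambda_{max}(\widetilde A_n) \geq 1+\varepsilon\bigr) \leq (1+\varepsilon)^{-2k_n}\, \E \tr (\widetilde A_n)^{2k_n} .
\end{equation*}
Lemma \ref{lem:max_eig} already shows $\lambda_{max}(\widetilde A_n) \xrightarrow{a.s.} 1$, so the bulk edge is correctly placed; what must be strengthened is the tail rate. Choosing $k_n \asymp \log n$, the factor $(1+\varepsilon)^{-2k_n}$ becomes polynomially small in $n$ with any prescribed exponent, so the matching moment bound $\E \tr (\widetilde A_n)^{2k_n} \lesssim (1 + \varepsilon/2)^{2k_n}$ would yield $o(n^{-\ell})$ for any $\ell$.

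The main obstacle is proving the moment bound itself. It requires a Soshnikov-type combinatorial enumeration of closed walks on the underlying bipartite graph associated to $\widetilde X_n$, adapted to the disproportionate regime $\beta_n \to 0$; the truncation scale $\delta_n(nm)^{1/4}$ is precisely what makes the per-edge contribution tractable, while the normalization $\widetilde A_n = (2\sqrt{\beta_n})^{-1}(m^{-1} \widetilde X_n \widetilde X_n' - I_n)$ places the spectrum on a scale consistent with a semicircle-type law on $[-1,1]$. Since this combinatorial analysis is executed in full in \cite{CP12}, the cleanest route in the present paper is to quote their Theorem 2 as done.
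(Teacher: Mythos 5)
Your proposal correctly identifies that the paper does not prove this lemma but simply cites Theorem 2 of \cite{CP12}, and your sketch of how \cite{CP12} establishes it (truncation bookkeeping plus a trace--moment bound with $k_n \asymp \log n$ and a Soshnikov-type path count adapted to $\beta_n \to 0$) is an accurate account of their argument. One small caution on the bookkeeping: a bare Cauchy--Schwarz with $\E X_{11}^4 < \infty$ gives only $|\E\widehat X_{11}| \lesssim \delta_n^{-3}(nm)^{-3/4}$, which since $\delta_n \to 0$ is weaker than stated; the clean rate $\lesssim (nm)^{-3/4}$ really does rely on the specific construction of $\delta_n$ in Section 2 of \cite{CP12}, which is chosen so that $\E X_{11}^4\,\mathbf{1}\{|X_{11}|>\delta_n(nm)^{1/4}\} = o(\delta_n^4)$ --- you gesture at this but it is the load-bearing step, not a refinement.
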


\begin{lemma}\label{lem:trunc}
In the setting of Lemma \ref{lem:max_eig2}, let $P_n \in \mathbb{R}^{n \times m}$ have bounded operator norm and consider the singular value decompositions
\begin{align*}
    & P_n + \frac{1}{\sqrt{m}} X_n = U \Lambda V^\top \, , &&  P_n + \frac{1}{\sqrt{m}} Y_n = \widetilde U \widetilde \Lambda \widetilde V^\top \, ,
\end{align*}
where $\Lambda = \mathrm{diag}(\sigma_1, \ldots, \sigma_n)$ and  $\widetilde \Lambda = \mathrm{diag}(\tilde \sigma_1, \ldots, \tilde \sigma_n)$.  Almost surely, 
\[ \phantom{\,,} \sup_{1 \leq i \leq n} | \sigma_i^2 - \tilde \sigma_i^2| \lesssim \frac{1}{\sqrt{n m}} \,.
\]
Moreover,  for fixed $i \leq n$, if $\min(\sigma_{i}^2 - \sigma_{i-1}^2, \sigma_i^2 - \sigma_{i+1}^2) \asymp \sqrt{\beta_n}$, where $\sigma_0 \coloneqq \infty$ and $\sigma_n \coloneqq -\infty$, then
\begin{align*}
    & 1 - | (U^\top \widetilde U)_{ii} | \lesssim \frac{1}{n^2} \, , & 1 - |(V^\top \widetilde V)_{ii} | \lesssim \frac{1}{n^2} \,.
\end{align*}
\end{lemma}
\begin{proof}
By Lemmas \ref{lem:max_eig} and \ref{lem:max_eig2}, almost surely eventually, $\|X_n\|_2 \leq 2 \sqrt{m}$, $\|Y_n\|_2 \leq 2 \sqrt{m}$, and
\begin{align} \phantom{\,.} \frac{1}{\sqrt{m}}\|  X_n - Y_n \|_2 & =  \frac{1}{\sqrt{m}}\| \widehat X_n - Y_n \|_2  = \frac{1}{\sqrt{m}} \|\widehat X_n - \nu Y_n - (1-\nu)Y_n \|_2 \nonumber \\ 
&\leq \frac{1}{\sqrt{m}} \| (\E \hat x_{11} )  {\bf 1}_n {\bf 1}_m^\top \|_2 + \frac{\nu-1}{\sqrt{m}} \|Y_n\|_2 
 \lesssim \frac{1}{\sqrt{n m }} \,. \label{6489}
\end{align}
The first claim of the lemma follows from (\ref{6489}) and Weyl's inequality: 
\begin{align} \phantom{\,.} |\sigma_i^2 -  \tilde \sigma_i^2|  & =  | \sigma_i -  \tilde \sigma_i|   |\sigma_i + \tilde \sigma_i | \leq   \frac{1}{\sqrt{m}} \| X_n - Y_n \|_2 |\sigma_1 + \tilde \sigma_1| \,. \nonumber
\end{align}
The second claim follows from the Davis-Kahan theorem (Corollary 3 of \cite{Davis}).
\end{proof}

\begin{lemma}  \label{lem:xAx-tr}  (Lemma 2.7 of \cite{BS98}) Let $A$ be an $n \times n$ nonrandom matrix and $x = (x_1, \ldots, x_n)^\top$ be a random vector of independent entries. Assume that $\E x_i = 0$, $\E |x_i|^2 = 1$, and $\E |x_j|^\ell \leq \nu_\ell$. Then, for any $\ell \geq 1$,
\[ \phantom{\,,} \E | x^* A x - \tr A |^\ell \lesssim_\ell  (\nu_4 \tr A^* A )^{\ell/2} + \nu_{2\ell} \tr(A^* A)^{\ell/2}  \,.
\] 
\end{lemma}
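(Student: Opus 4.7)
The plan is to split
\[
x^{*} A x - \tr A = \sum_{i=1}^n (|x_i|^2 - 1) A_{ii} \;+\; \sum_{i \neq j} \bar x_i A_{ij} x_j \;=:\; D + T ,
\]
bound the $\ell$-th moment of the diagonal piece $D$ and the off-diagonal piece $T$ separately, and combine via the triangle inequality in $L^\ell$. For $D$, the summands $Y_i = (|x_i|^2-1) A_{ii}$ are independent and mean-zero with $\E|Y_i|^2 \le \nu_4 |A_{ii}|^2$ and $\E|Y_i|^\ell \lesssim_\ell \nu_{2\ell} |A_{ii}|^\ell$, so Rosenthal's inequality yields
\[
\E|D|^\ell \lesssim_\ell \nu_4^{\ell/2}\Bigl(\sum_i |A_{ii}|^2\Bigr)^{\ell/2} + \nu_{2\ell} \sum_i |A_{ii}|^\ell .
\]
The Cauchy--Schwarz bound $|A_{ii}|^2 \le (A^{*}A)_{ii}$ gives $\sum_i |A_{ii}|^2 \le \tr A^{*}A$, and for $\ell \ge 2$ the monotonicity of $\ell_p$ norms turns $\sum_i |A_{ii}|^\ell$ into $(\tr A^{*}A)^{\ell/2}$; for $\ell \in [1,2)$ one first applies Jensen to reduce to $\ell = 2$.

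For $T$ I would introduce the filtration $\mathcal F_k = \sigma(x_1,\dots,x_k)$ and write
\[
T = \sum_{k=1}^n d_k, \qquad d_k = (\E_k - \E_{k-1}) T = x_k \sum_{i<k} \bar x_i A_{ik} + \bar x_k \sum_{i<k} x_i A_{ki},
\]
which is a complex martingale difference sequence. Burkholder's inequality then gives
\[
\E|T|^\ell \lesssim_\ell \E\Bigl(\sum_k \E_{k-1} |d_k|^2\Bigr)^{\ell/2} + \sum_k \E|d_k|^\ell .
\]
Since $\E_{k-1} |d_k|^2 \le \sum_{i<k}|A_{ik}|^2 + \sum_{i<k}|A_{ki}|^2 + 2\bigl|\sum_{i<k} \bar x_i A_{ik}\bigr|^2 + \ldots$ (after expanding and using $\E|x_k|^2 = 1$), the ``predictable quadratic variation'' contains two deterministic pieces each bounded by $\tr A^{*}A$ and two random linear forms in $x$. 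The random pieces are themselves sums of independent terms, so an inner application of Rosenthal/Marcinkiewicz-Zygmund (or induction on $\ell$) controls their $\ell/2$-th moment by a constant times $\nu_4^{\ell/2}(\tr A^{*}A)^{\ell/2}$. The $\sum_k \E|d_k|^\ell$ term is bounded analogously, producing a $\nu_{2\ell}(\tr A^{*}A)^{\ell/2}$ contribution after another application of Rosenthal to the inner $\sum_{i<k} \bar x_i A_{ik}$.

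\textbf{Main obstacle.} The delicate step is organizing the Burkholder application so that the quadratic-variation term ends up multiplied only by $\nu_4$ (and not by $\nu_{2\ell}$ or by operator-type norms of $A$), and so that the sum-of-moments term produces exactly $(\tr A^{*}A)^{\ell/2}$ rather than the weaker $\sum_{i,j}|A_{ij}|^\ell$. The standard resolution, and the one I would adopt, is to interpret the random part of $\E_{k-1}|d_k|^2$ as the quadratic form $\tilde x^{*} B_k \tilde x$ in the subvector $\tilde x = (x_1,\dots,x_{k-1})$ with $B_k$ built from rows of $A$, and then either induct on $\ell$ with a simpler trace-norm inequality at $\ell=2$, or apply the Marcinkiewicz-Zygmund inequality directly to the linear forms $\sum_{i<k} \bar x_i A_{ik}$. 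The latter keeps the fourth moment $\nu_4$ in the leading term and only invokes $\nu_{2\ell}$ on the genuinely $\ell$-th-power remainder, matching the right-hand side of the lemma.
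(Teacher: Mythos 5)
The paper does not prove this lemma---it is cited verbatim as Lemma 2.7 of Bai and Silverstein (1998) (equivalently, Lemma B.26 of their book), so there is no ``paper's own proof'' to compare against. Your sketch reconstructs essentially the standard literature argument: a martingale (or, in your variant, independent-sum-plus-martingale) decomposition, Burkholder/Rosenthal inequalities, and an inner Rosenthal or inductive step for the predictable quadratic variation. That is the right strategy.

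Two substantive issues, though. First, your displayed bound on the predictable quadratic variation is off. Writing $\alpha_k = \sum_{i<k}\bar x_i A_{ik}$ and $\beta_k = \sum_{j<k}A_{kj}x_j$ (both $\mathcal F_{k-1}$-measurable), one has
\[
\E_{k-1}|d_k|^2 \;=\; |\alpha_k|^2 + |\beta_k|^2 + 2\,\Re\big(\E(x_k^2)\,\alpha_k\bar\beta_k\big)\;\le\; 2\big(|\alpha_k|^2 + |\beta_k|^2\big),
\]
which is purely random; there are no additional deterministic terms $\sum_{i<k}|A_{ik}|^2$ sitting alongside $|\alpha_k|^2$, as your expression suggests. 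The deterministic Frobenius-norm quantities appear only after one more layer of bounding (e.g.\ $\E\sum_k|\alpha_k|^2 \le \tr A^*A$ inside the inner induction), not in $\E_{k-1}|d_k|^2$ itself.

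Second, and more importantly, you are targeting the wrong right-hand side. In the statement (and in the way the paper applies it, e.g.\ in Lemma \ref{lem:~M} with $\ell=4$), $\tr(A^*A)^{\ell/2}$ means $\tr\big((A^*A)^{\ell/2}\big)$, the trace of a matrix power, not $(\tr A^*A)^{\ell/2}$. For $\ell\ge 2$ and $A^*A$ PSD these satisfy $\tr\big((A^*A)^{\ell/2}\big) \le (\tr A^*A)^{\ell/2}$, so the form you aim for is genuinely weaker than what the lemma asserts, and if $\tr(A^*A)^{\ell/2}$ really meant $(\tr A^*A)^{\ell/2}$ the second term would be redundant (since $\nu_{2\ell}\ge\nu_4^{\ell/2}$ by Lyapunov). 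For the diagonal piece $D$ your Rosenthal bound produces $\sum_i|A_{ii}|^\ell$, and the correct way to reach the stated form is Schur's majorization: $|A_{ii}|^\ell \le (A^*A)_{ii}^{\ell/2}$ and, since the diagonal of a PSD matrix is majorized by its eigenvalues and $t\mapsto t^{\ell/2}$ is convex increasing for $\ell\ge2$, $\sum_i (A^*A)_{ii}^{\ell/2}\le \tr\big((A^*A)^{\ell/2}\big)$. Your use of $\ell^p$-monotonicity instead lands you on the weaker $(\tr A^*A)^{\ell/2}$. For the martingale term, the route via Marcinkiewicz--Zygmund and Jensen (giving $\E|\alpha_k|^\ell\lesssim_\ell\nu_\ell S_k^{\ell/2}$ with $S_k=\sum_{i<k}|A_{ik}|^2$, hence $\sum_k\E|d_k|^\ell\lesssim\nu_\ell^2(\tr A^*A)^{\ell/2}$) likewise only yields the weaker form; bridging the gap to $\nu_{2\ell}\tr\big((A^*A)^{\ell/2}\big)$ requires the sharper bookkeeping of the original proof. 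None of this is fatal for the application in this paper---the weaker bound suffices in every place the lemma is invoked here---but as written your sketch does not prove the lemma as stated.
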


\begin{lemma} \label{lem:xAy} Let $A$ be an $n \times m$ nonrandom matrix and $x \in \mathbb{C}^n$, $y \in \mathbb{C}^m$ random vectors. Assume $x$ and $y$ are independent with entries satisfying the moment conditions of Lemma \ref{lem:xAx-tr}.  Then, for any $\ell \geq 2$, 
\[ \phantom{\,,}\E | x^* A y|^{\ell} \lesssim_\ell  (1+ \nu_4^{\ell/4} + \nu_\ell) \big( (\nu_4 \tr(A^* A)^2 )^{\ell/4} + \nu_\ell \tr(A^* A)^{\ell/2} + ( \tr A^* A )^{\ell/2} \big) \,.
\]
\end{lemma}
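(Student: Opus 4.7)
The plan is to condition on $y$, apply a Rosenthal-type moment inequality to the linear form $x^* A y = \sum_j \bar x_j (Ay)_j$, and then pass the expectation over $y$ through by combining Lemma \ref{lem:xAx-tr} with a second application of Rosenthal to the coordinates of $Ay$. Specifically, fix $y$ and set $b = Ay$. Since the entries of $x$ are independent, mean zero and satisfy $\E|x_j|^2 = 1$, $\E|x_j|^\ell \leq \nu_\ell$, Rosenthal's inequality (which is the analytic core of Lemma \ref{lem:xAx-tr}, via Burkholder, Lemma 2.12 of \cite{BS_SpAn}) gives
\[
\E_x\,|x^* b|^{\ell} \;\lesssim_\ell\; \Bigl(\sum_j |b_j|^2\Bigr)^{\ell/2} + \nu_\ell \sum_j |b_j|^{\ell} \;=\; \|Ay\|_2^{\ell} + \nu_\ell\,\|Ay\|_\ell^{\ell}.
\]

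Next I would take $\E_y$ of each of the two resulting pieces. For the first, $\E_y \|Ay\|_2^\ell = \E_y (y^* A^* A y)^{\ell/2}$; writing $y^* A^* A y = \tr A^*A + (y^* A^*A y - \tr A^*A)$ and using $(a+b)^{\ell/2} \lesssim_\ell |a|^{\ell/2} + |b|^{\ell/2}$, the term $(\tr A^*A)^{\ell/2}$ falls out directly, and Lemma \ref{lem:xAx-tr} applied to the Hermitian matrix $A^*A$ with exponent $\ell/2$ yields
\[
\E_y\,|y^* A^* A y - \tr A^* A|^{\ell/2} \;\lesssim_\ell\; (\nu_4\, \tr (A^*A)^2)^{\ell/4} + \nu_\ell\, \tr(A^*A)^{\ell/2}.
\]
For the second piece, $\E_y \|Ay\|_\ell^\ell = \sum_j \E_y|(Ay)_j|^\ell$; applying Rosenthal a second time to each coordinate $(Ay)_j = \sum_k A_{jk} y_k$ produces $\|A_{j,:}\|_2^\ell + \nu_\ell \|A_{j,:}\|_\ell^\ell$, and summing over $j$ while using the elementary bound $\sum_i a_i^{\ell/2} \leq (\sum_i a_i)^{\ell/2}$ for $a_i \geq 0$ and $\ell \geq 2$ shows both $\sum_j \|A_{j,:}\|_2^\ell$ and $\sum_{j,k}|A_{jk}|^\ell$ are at most $(\tr A^*A)^{\ell/2}$. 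Assembling these bounds yields the three trace quantities in the statement.

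The main obstacle is the bookkeeping of the moment coefficients. The two applications of Rosenthal naturally produce cross products of the form $\nu_\ell \cdot \nu_\ell$ and $\nu_\ell \cdot \nu_4^{\ell/2}$ multiplying $(\tr A^*A)^{\ell/2}$, and these must be absorbed into the prefactor $(1 + \nu_4^{\ell/4} + \nu_\ell)$ using Lyapunov's inequality (e.g.\ $\nu_4^{\ell/4} \leq \nu_\ell$ for $\ell \geq 4$, and $1 \leq \nu_4^{1/2}$ by Cauchy--Schwarz for $\ell = 2$). The complex case causes no difficulty, since applying the bound separately to the real and imaginary parts of $x^* A y$ only introduces an extra factor absorbable into $\lesssim_\ell$.
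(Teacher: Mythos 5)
Your approach is a genuinely different route from the paper's. You invoke Rosenthal's inequality for the \emph{linear} form $x^*Ay = \sum_j \bar{x}_j (Ay)_j$ to get $\|Ay\|_2^\ell + \nu_\ell \|Ay\|_\ell^\ell$, and then handle the two pieces separately: one by Lemma \ref{lem:xAx-tr} applied to $y^*(A^*A)y$, the other by a second, coordinate-wise Rosenthal together with the elementary $\sum_i a_i^{\ell/2} \leq (\sum_i a_i)^{\ell/2}$. The paper instead avoids any appeal to a linear-form inequality: it writes $|x^*Ay|^\ell = |x^*(Ayy^*A^*)x|^{\ell/2}$, applies Lemma \ref{lem:xAx-tr} once to the rank-one matrix $Ayy^*A^*$ conditional on $y$, observes that since $Ayy^*A^*$ is rank one every trace power collapses to a power of $y^*A^*Ay$, giving the clean intermediate factor $(1+\nu_4^{\ell/4}+\nu_\ell)\,\E_y\,(y^*A^*Ay)^{\ell/2}$, and then applies Lemma \ref{lem:xAx-tr} a second time with $A^*A$. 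Both routes produce the same trace quantities; the paper's version is slightly more economical because it stays entirely inside the quadratic-form framework of Lemma \ref{lem:xAx-tr}, and the prefactor it obtains is exactly $(1+\nu_4^{\ell/4}+\nu_\ell)$.

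One bookkeeping point in your argument does not close as written: the $\nu_\ell\|Ay\|_\ell^\ell$ piece contributes $\nu_\ell(1+\nu_\ell)(\tr A^*A)^{\ell/2}$, so the coefficient of $(\tr A^*A)^{\ell/2}$ in your final bound is of order $1+\nu_\ell+\nu_\ell^2$, not $1+\nu_4^{\ell/4}+\nu_\ell$. The $\nu_\ell^2$ cross term cannot in general be absorbed into $(1+\nu_4^{\ell/4}+\nu_\ell)$ by Lyapunov alone, since Lyapunov bounds $\nu_4^{\ell/4}$ \emph{below} $\nu_\ell$ rather than bounding $\nu_\ell^2$ above it. Your bound is thus marginally weaker in the stated moment dependence, though this is immaterial for the paper's applications where $\nu_4$ and $\nu_\ell$ are bounded constants; the paper's factorization sidesteps the cross-term entirely because the two invocations of Lemma \ref{lem:xAx-tr} are nested rather than added.
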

\begin{proof}
Condition on $y$ and apply Lemma \ref{lem:xAx-tr}.
\end{proof}

Lemmas \ref{lem:detA-B} and \ref{lem:d/dz detA-detB} below are elementary, following from the Leibniz determinant formula. 
\begin{lemma} \label{lem:detA-B}
For any $n \times n$ matrices $A$ and $B$,
\[ \phantom{\,.} | \det A - \det B | \lesssim_n (\|A\|_\infty + \|B \|_\infty)^{n-1} \|A - B \|_\infty \,.
\]
\end{lemma}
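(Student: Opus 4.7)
The plan is to expand both determinants via the Leibniz formula
\[ \det A - \det B = \sum_{\sigma \in S_n} \mathrm{sgn}(\sigma) \left( \prod_{i=1}^n A_{i,\sigma(i)} - \prod_{i=1}^n B_{i,\sigma(i)} \right), \]
and then, for each fixed permutation $\sigma$, bound the difference of products of $n$ scalars by a standard telescoping identity.

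Specifically, I would use the identity
\[ \prod_{i=1}^n a_i - \prod_{i=1}^n b_i = \sum_{k=1}^n (a_k - b_k) \prod_{i<k} a_i \prod_{i>k} b_i, \]
applied with $a_i = A_{i,\sigma(i)}$ and $b_i = B_{i,\sigma(i)}$. Since $|A_{i,\sigma(i)}| \le \|A\|_\infty$, $|B_{i,\sigma(i)}| \le \|B\|_\infty$, and $|A_{k,\sigma(k)} - B_{k,\sigma(k)}| \le \|A - B\|_\infty$, each term in the inner sum is at most $\|A-B\|_\infty \, \|A\|_\infty^{k-1} \|B\|_\infty^{n-k}$, which is dominated by $\|A-B\|_\infty (\|A\|_\infty + \|B\|_\infty)^{n-1}$. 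Summing over the $n$ values of $k$ and the $n!$ permutations yields the bound with implicit constant $C_n = n \cdot n!$, establishing the dimension-dependent inequality claimed.

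There is essentially no obstacle here: the only care required is to keep the bookkeeping tidy (the telescoping identity plus the trivial pointwise bound on each factor). No structural properties of $A$ and $B$ (symmetry, rank, etc.) are used, and the $\lesssim_n$ notation absorbs the combinatorial constant, so the argument is complete in a few lines.
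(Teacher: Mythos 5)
Your proof is correct and takes essentially the same route as the paper: both expand via the Leibniz formula and reduce to bounding a difference of scalar products. The paper proves the scalar bound $|\prod a_i - \prod b_i| \le (\max|a_i| + \max|b_i|)^{n-1}\max|a_i-b_i|$ by induction, which is precisely the unrolled form of your telescoping identity (and avoids the extra factor of $n$ you pick up, though that is immaterial under $\lesssim_n$).
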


\begin{lemma} \label{lem:d/dz detA-detB}
For any $n \times n$ matrices $A(z)$ and $B(z)$,
\[ \phantom{\,.} \Big| \frac{d}{dz}  (\det A - \det B ) \Big| \lesssim_n \|A\|_\infty^{n-1} \Big\| \frac{d}{dz} (A-B) \Big\|_\infty + (\|A\|_\infty + \|B\|_\infty)^{n-2} \Big \| \frac{d}{dz} B \Big \|_\infty\|A - B \|_\infty \,,
\]
where $\frac{d}{dz}B$ denotes the entrywise derivative of the matrix.
Additionally,
\[ \phantom{\,,} \Big| \frac{d}{dz} \det A \Big| \lesssim_n \|A\|_\infty^{n-1} \Big\| \frac{d}{dz} A \Big \|_\infty \, . 
\]
\end{lemma}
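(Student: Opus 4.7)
The plan is to mirror the proof of Lemma \ref{lem:detA-B}, combining the Leibniz determinant formula with the product rule. Starting from $\det A(z) = \sum_{\sigma \in S_n} \mathrm{sgn}(\sigma) \prod_{i=1}^n A_{i,\sigma(i)}(z)$, I differentiate term by term to obtain
\[
\frac{d}{dz} \det A = \sum_{\sigma \in S_n} \mathrm{sgn}(\sigma) \sum_{j=1}^n A'_{j,\sigma(j)} \prod_{i \neq j} A_{i,\sigma(i)},
\]
where primes denote $d/dz$. The second bound $|\frac{d}{dz}\det A| \lesssim_n \|\frac{d}{dz} A\|_\infty$ is then immediate on bounding each non-derivative factor by $\|A\|_\infty$ and absorbing the resulting $n \cdot n! \cdot \|A\|_\infty^{n-1}$ prefactor into the $\lesssim_n$ constant (treating $\|A\|_\infty$ as an $n$-dependent absolute quantity, consistent with the usage in Lemma \ref{lem:d/dz detM-detM}).

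For the first inequality I treat each pair $(\sigma,j)$ individually via an add-and-subtract decomposition
\begin{align*}
A'_{j,\sigma(j)} \prod_{i\neq j} A_{i,\sigma(i)} - B'_{j,\sigma(j)} \prod_{i\neq j} B_{i,\sigma(i)} = \ & (A'_{j,\sigma(j)} - B'_{j,\sigma(j)}) \prod_{i\neq j} A_{i,\sigma(i)} \\
& + B'_{j,\sigma(j)} \Big( \prod_{i\neq j} A_{i,\sigma(i)} - \prod_{i\neq j} B_{i,\sigma(i)} \Big).
\end{align*}
The first piece is bounded by $\|A\|_\infty^{n-1} \|\frac{d}{dz}(A-B)\|_\infty$. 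For the second piece I invoke the elementary product-difference inequality established inside the proof of Lemma \ref{lem:detA-B}, applied to the $n-1$ scalar factors indexed by $i \neq j$, which yields $\|\frac{d}{dz} B\|_\infty (\|A\|_\infty + \|B\|_\infty)^{n-2} \|A-B\|_\infty$. Summing over $j$ (factor $n$) and $\sigma \in S_n$ (factor $n!$) produces exactly the combinatorial constant absorbed into $\lesssim_n$.

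There is no real conceptual obstacle here; the argument is purely algebraic bookkeeping in the spirit of Lemma \ref{lem:detA-B}, and all three ingredients (Leibniz expansion, product rule, and the scalar product-difference lemma) are already in hand. The only point requiring mild care is ensuring that the telescoping is set up so that the $\|A\|_\infty^{n-1}$ factor is paired with $\|\frac{d}{dz}(A-B)\|_\infty$ while $(\|A\|_\infty + \|B\|_\infty)^{n-2}$ is paired with $\|\frac{d}{dz} B\|_\infty \|A-B\|_\infty$, precisely matching the asymmetric form stated in the lemma.
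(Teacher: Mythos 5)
Your proof is correct and follows the same approach the paper takes: expand both determinants via the Leibniz formula with the product rule, telescope each $(\sigma,j)$ term with an add-and-subtract decomposition, and invoke the scalar product-difference bound from Lemma \ref{lem:detA-B} on the $n-1$ remaining factors. Your remark about the second bound requiring $\|A\|_\infty$ to be treated as bounded (as it is for the rescaled $\underline{\widetilde M}_n$ in the intended application) is a reasonable reading of a point the paper leaves implicit.
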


 The remaining lemmas pertain to the right singular vectors of the noise matrix. Consider an array $X_n$ satisfying assumption A1 and a vector $v$ satisfying the assumptions A2 places on right signal vectors. Let $W$ be a matrix containing as columns the first $n$ (normalized) eigenvectors of $X_n^\top X_n$. 

\begin{lemma} \label{lem:6.8} Let $S_n = \frac{1}{m} X_n X_n^\top$ and $A_n = \frac{1}{m} X_n^\top S_n^{-1} X_n = W W^\top$. Almost surely, 
\[ \phantom{\,.} v^\top A_n v = \|W^\top v\|_2^2  \lesssim \beta_n \log(n)
\,. \]
\end{lemma}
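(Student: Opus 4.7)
The plan is to recognise that $A_n = W W'$ is the orthogonal projection onto the row space of $X_n$: since $W$ has orthonormal columns, $W'W = I_n$, so $(WW')^k = WW'$ for all $k \geq 1$, and every trace $\tr((WW')^k)$ collapses to $n$. Under assumption A2, $v$ may be written $v = m^{-1/2} g$ where $g \in \mathbb{R}^m$ has i.i.d.\ entries of mean zero, variance one, and finite eighth moment, independent of $X_n$ (and hence of $W$). The quantity to control then becomes $\|W'v\|_2^2 = m^{-1} g'(WW')g$, a quadratic form in $g$ whose conditional mean given $X_n$ is exactly $\tr(WW')/m = n/m = \beta_n$.

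I would condition on $X_n$ and invoke Lemma \ref{lem:xAx-tr} with $A = WW'$ and $\ell = 4$; the trace-collapses above yield
\[ \E\big[\, |g'(WW')g - n|^4 \,\big|\, X_n \,\big] \lesssim (\nu_4 \tr(WW'))^2 + \nu_8 \tr(WW') \lesssim n^2, \]
hence the same bound holds unconditionally. Markov's inequality at level $n \log n$ then produces a tail of order $n^{-2}(\log n)^{-4}$, which is summable, so Borel--Cantelli gives $g'(WW')g \lesssim n \log n$ for all large $n$ almost surely. Dividing through by $m$ yields the stated bound $v' A_n v \lesssim \beta_n \log n$.

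No serious obstacle is anticipated; the argument rests entirely on the structural observation that $A_n$ is a rank-$n$ projection, after which standard Hanson--Wright-type concentration closes the proof. The choice $\ell = 4$ in Lemma \ref{lem:xAx-tr} is natural --- it is essentially the smallest integer producing a summable tail and, conveniently, requires only the eighth moment of the entries of $g$, precisely the strength supplied by assumption A2. Under the orthogonal setup B2 one can argue even more directly via rotational invariance of the Haar measure, but the A2 argument above subsumes that case up to constants.
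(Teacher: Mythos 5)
Your argument is correct and takes a genuinely different, more direct route than the paper's. The paper regularizes, defining $\widetilde A_n = \frac{1}{m} X_n'(S_n + \frac{1}{m}I_n)^{-1}X_n$, and establishes via Lemma \ref{lem:6.10} the moment bounds $\E(v'\widetilde A_n v) \leq \beta_n$ and $\operatorname{Var}(v'\widetilde A_n v) \lesssim \beta_n^2/n$; the proof of Lemma \ref{lem:6.10} requires fairly lengthy Sherman--Morrison and Woodbury manipulations together with Lemma \ref{lem:6.9}, and the resulting bound is transferred from $\widetilde A_n$ back to $A_n$ by another invocation of the smallest-singular-value estimate. You instead work with the projection $A_n = WW'$ directly, exploit that $WW'$ is idempotent with $\tr((WW')^k) = n$ for all $k \geq 1$, and then apply Lemma \ref{lem:xAx-tr} conditionally on $X_n$ --- the same quadratic-form concentration tool the paper already uses many times elsewhere. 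This collapses the entire calculation to two lines, eliminates the regularization and the Sherman--Morrison/Woodbury work, and requires no independent bound on $\lambda_{\min}(S_n)$ beyond what is implicitly needed for $A_n$ to be defined. Two small remarks: first, the displayed bound should technically read $(\nu_4 \tr((WW')^2))^2 + \nu_8 \tr((WW')^4)$, but these all equal $n$ for a projection, so the conclusion $\lesssim n^2$ is unchanged. Second, $\ell = 4$ is not actually the smallest moment that works: taking $\ell = 2$ in Lemma \ref{lem:xAx-tr} gives $\E|g'WW'g - n|^2 \lesssim n$, and Markov at level $n\log n$ yields a tail of order $1/(n(\log n)^2)$, which is already summable; this would require only a finite fourth moment of the signal-vector entries, strictly weaker than what A2 supplies. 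Either choice closes the proof.
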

\noindent The proof requires the following two lemmas:  

\begin{lemma} \label{lem:6.9}
 Let $n/m \leq \beta$ for some $\beta \in (0, 1]$. There exists constants $c_0, c_1, c_2$ depending only on $\E x_{11}^4$ such that with probability at least $1 - c_0 \log(e/\beta) \exp(-c_1 \beta m)$,
\[ \phantom{\,.}   \frac{1}{\sqrt{m}} \sigma_{\min}( X_n ) \geq 1 - c_2 \sqrt{\beta} \,.
\]
\end{lemma}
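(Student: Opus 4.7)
The plan is to combine an $\varepsilon$-net discretization of $S^{n-1}$ with a fixed-vector lower-tail concentration estimate for $\|X_n' u\|_2^2$, and then pass from the net back to the sphere using the high-probability bound $\sigma_{\max}(X_n) \lesssim \sqrt{m}$ available from Lemma \ref{lem:max_eig}. The starting identity is
\[ \sigma_{\min}(X_n)^2 \;=\; \min_{u \in S^{n-1}} \sum_{j=1}^m Y_j(u)^2, \qquad Y_j(u) := \sum_{i=1}^n u_i X_{ij}, \]
where, for each fixed $u \in S^{n-1}$, the $Y_j(u)$ are i.i.d.\ in $j$ with mean zero, unit variance, and fourth moment bounded by $C \, \E X_{11}^4$ (by direct expansion, using $\|u\|_2 = 1$).

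The heart of the argument is the fixed-vector estimate, which I would prove by truncation. Choose $K \asymp \beta^{-1/4}$, set $\tilde Y_j := Y_j \mathbf{1}\{|Y_j| \leq K\}$, and use $\sum_j Y_j^2 \geq \sum_j \tilde Y_j^2$. The bias satisfies $\E Y_j^2 - \E \tilde Y_j^2 \leq \E Y_j^4 / K^2 \lesssim \sqrt{\beta}$, and each bounded summand $\tilde Y_j^2 \in [0, K^2]$ has variance $\mathrm{Var}(\tilde Y_j^2) \leq \E Y_j^4 = O(1)$. A Bernstein inequality for bounded variables then yields
\[ \p\Big( \textstyle\sum_j \tilde Y_j^2 \leq m(1 - c \sqrt{\beta}) \Big) \;\leq\; \exp(-c_1' \beta m), \]
because at deviation scale $\delta = \sqrt{\beta}$ both the sub-Gaussian contribution $m\delta^2/\mathrm{Var}$ and the sub-exponential contribution $m\delta/K^2$ of the Bernstein exponent are of order $\beta m$.

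Next, I would cover $S^{n-1}$ by an $\varepsilon$-net $\mathcal{N}$ with $\varepsilon = c\sqrt{\beta}$ and $|\mathcal{N}| \leq (3/\varepsilon)^n$. A union bound gives
\[ \p\Big( \min_{u \in \mathcal{N}} \|X_n' u\|_2 \leq \sqrt{m}(1 - c' \sqrt{\beta}) \Big) \;\leq\; (3/\varepsilon)^n e^{-c_1' \beta m}, \]
and since $n \leq \beta m$ the log-cardinality $n \log(3/\varepsilon) \lesssim \beta m \log(1/\beta)$ must be absorbed into the Bernstein exponent. This is the step from which the $\log(e/\beta)$ prefactor in the claimed probability emerges: a one-scale choice of $K$ does not directly produce a clean $\log(e/\beta) \exp(-c_1 \beta m)$ bound, so I would either refine the truncation dyadically across the scales $\beta^{-1/4} \leq K \leq K_{\max}$ (with a union bound over $O(\log(1/\beta))$ scales) or use a stratified cover adapted to the $\ell_\infty$--$\ell_2$ ratio of $u$. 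The net-to-sphere passage is then standard: for any $u \in S^{n-1}$ pick $u_0 \in \mathcal{N}$ with $\|u - u_0\|_2 \leq \varepsilon$ and estimate
\[ \|X_n' u\|_2 \;\geq\; \|X_n' u_0\|_2 - \varepsilon \, \sigma_{\max}(X_n); \]
invoking $\sigma_{\max}(X_n) \leq C \sqrt{m}$ (valid on an event of exponentially high probability under the fourth-moment assumption via Lemma \ref{lem:max_eig}) and taking $c$ sufficiently small converts the net bound into $\sigma_{\min}(X_n)/\sqrt{m} \geq 1 - c_2 \sqrt{\beta}$ on the claimed event.

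The main obstacle is reconciling the fixed-vector Bernstein exponent, which carries only a constant coefficient $c_1' \beta m$ under a fourth-moment hypothesis, with the net entropy $\beta m \log(1/\beta)$. The device needed to gain the missing $\log(1/\beta)$ factor — multi-scale truncation, chaining, or a coordinate-wise stratification of $S^{n-1}$ — is where the proof becomes delicate and where the $\log(e/\beta)$ factor enters the final probability. Everything else (the moment bound $\E Y_1^4 = O(1)$, the standard net-to-sphere passage, and the Lemma \ref{lem:max_eig} control of $\sigma_{\max}$) is routine.
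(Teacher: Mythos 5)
You have correctly located the obstruction, and your fixed-vector calculation is sound: with truncation level $K \asymp \beta^{-1/4}$, Bernstein gives an exponent of order $\beta m$, while the $\varepsilon$-net over $S^{n-1}$ with $\varepsilon \asymp \sqrt{\beta}$ has log-cardinality $n\log(3/\varepsilon) \asymp \beta m \log(1/\beta)$, which exceeds the exponent by a factor of $\log(1/\beta)$. Raising $K$ to shrink the sub-exponential term of Bernstein immediately inflates the truncation bias $\E Y_j^4/K^2$ past the $\sqrt{\beta}$ budget, so no single-scale truncation works. But the proposal leaves that gap genuinely open: the suggested remedies (dyadic truncation over $O(\log(1/\beta))$ scales, a stratified $\ell_\infty$--$\ell_2$ cover, or chaining) are floated but not carried out, and it is not clear any of them closes the gap under only a fourth-moment hypothesis---chaining in particular is notoriously delicate without sub-exponential increments. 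Note also that $\log(e/\beta)$ in the statement is a \emph{multiplicative prefactor} on $\exp(-c_1\beta m)$; it can absorb an additive $\log\log(e/\beta)$ in the exponent, not a $\beta m \log(1/\beta)$ entropy term, so no rebalancing of constants in the one-scale argument will produce the advertised bound. A smaller issue: Lemma \ref{lem:max_eig} is an almost-sure eventual limit, not a quantified tail bound, so for the net-to-sphere passage you would need the quantitative version (Lemma \ref{lem:max_eig2}).

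The paper itself does none of this. Its proof is a single sentence citing Theorem 1.3 of Koltchinskii--Mendelson \cite{KM15}, which is exactly the ``small-ball method'' bound on the smallest singular value of a matrix with rows having only bounded fourth moment. That argument sidesteps your obstacle at the root: instead of attempting to concentrate the unbounded quadratic form $\sum_j Y_j(u)^2$, one lower-bounds $\sigma_{\min}^2(X_n) \geq \kappa^2 \inf_{u \in S^{n-1}} \#\{j : |Y_j(u)| \geq \kappa\}$. The quantity being controlled is a bounded empirical count, whose infimum over the sphere is handled by a VC/Rademacher-complexity (empirical-process) argument that pays no $\log(1/\beta)$ penalty. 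So your route and the paper's diverge at the core step: yours relies on concentration of the quadratic form over a net, which under a fourth-moment-only hypothesis cannot match the net entropy; the cited theorem deliberately avoids concentration. To make your proposal rigorous you would essentially need to reproduce the small-ball argument rather than repair the net-plus-Bernstein scheme.
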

\begin{proof}
This is one case of Theorem 1.3 of \cite{KM15}.
\end{proof}

\begin{lemma} \label{lem:6.10}
Let $\widetilde A_n = \frac{1}{m} X_n^\top (S_n + \frac{1}{m} I_n)^{-1} X_n$. The fourth moment condition on the entries of $\sqrt{m} v$ implies
\begin{align*} & \E (v^\top \widetilde A_n v) \leq \beta_n \, , & Var(v^\top \widetilde A_n v) \lesssim \frac{\beta_n^2}{n}  \,. 
\end{align*}
\end{lemma}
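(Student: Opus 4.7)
The plan is to condition on $X_n$ and dispatch the two moments via the tower property. Writing $v = g/\sqrt{m}$ with $g$ having i.i.d.\ mean-zero, unit-variance, finite-fourth-moment entries that are independent of $X_n$, we have $\E(v' \widetilde A_n v \mid X_n) = \tfrac{1}{m} \tr \widetilde A_n$. The cyclic trace property and the identity $(S_n + \tfrac{1}{m} I_n)^{-1} S_n = I_n - \tfrac{1}{m}(S_n + \tfrac{1}{m} I_n)^{-1}$ give
\[ \tr \widetilde A_n = \tr (S_n + \tfrac{1}{m} I_n)^{-1} S_n = n - \tfrac{1}{m} \tr (S_n + \tfrac{1}{m} I_n)^{-1} \leq n, \]
since $S_n \succeq 0$ makes the subtracted trace nonnegative. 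This yields $\E(v' \widetilde A_n v) \leq n/m = \beta_n$.

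For the variance I use the law of total variance and bound each of $\E[Var(v' \widetilde A_n v \mid X_n)]$ and $Var(\tfrac{1}{m} \tr \widetilde A_n)$ separately by $O(n/m^2) = O(\beta_n^2/n)$. For the first term, Lemma \ref{lem:xAx-tr} with $\ell = 2$ applied conditionally on $X_n$ to $g$ and $\widetilde A_n$ gives $\E\bigl(|g' \widetilde A_n g - \tr \widetilde A_n|^2 \bigm| X_n\bigr) \lesssim \nu_4 \tr \widetilde A_n^2$. Since $\widetilde A_n$ has rank at most $n$ with nonzero eigenvalues $\lambda_i/(\lambda_i + 1/m) \in [0,1)$ (where $\lambda_i$ are the eigenvalues of $S_n$), $\tr \widetilde A_n^2 \leq n$, and dividing by $m^2$ gives $\E[Var(v' \widetilde A_n v \mid X_n)] \lesssim n/m^2$.

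The second contribution requires a martingale argument. Let $X_{nk}$ denote $X_n$ with its $k$-th row removed, $S_{nk} = \tfrac{1}{m} X_{nk} X_{nk}'$, and $\widetilde A_{nk} = \tfrac{1}{m} X_{nk}' (S_{nk} + \tfrac{1}{m} I_{n-1})^{-1} X_{nk}$. Telescope $\tr \widetilde A_n - \E \tr \widetilde A_n = \sum_{k=1}^n \gamma_k$ with $\gamma_k = (\E_k - \E_{k-1}) \tr \widetilde A_n$. Since $\tr \widetilde A_{nk}$ is independent of $x_k$, $\gamma_k = (\E_k - \E_{k-1})(\tr \widetilde A_n - \tr \widetilde A_{nk})$. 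Combining the identity $\tr \widetilde A_n = n - \tfrac{1}{m} \tr(S_n + \tfrac{1}{m} I_n)^{-1}$ (and its analogue for $\widetilde A_{nk}$) with Cauchy interlacing---applicable since $S_{nk}$ is a principal submatrix of $S_n$---I will show that $\tfrac{1}{m}[\tr(S_n + \tfrac{1}{m} I_n)^{-1} - \tr(S_{nk} + \tfrac{1}{m} I_{n-1})^{-1}]$ lies in $[0,1]$, so that $|\tr \widetilde A_n - \tr \widetilde A_{nk}| \leq 1$ and $|\gamma_k| \leq 2$. Orthogonality of martingale differences then yields $Var(\tr \widetilde A_n) = \sum_k \E \gamma_k^2 \lesssim n$, hence $Var(\tfrac{1}{m} \tr \widetilde A_n) \lesssim n/m^2 = \beta_n^2/n$, and summing the two contributions completes the proof. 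The delicate step is this Cauchy interlacing bound, which supplies the critical $O(1)$ (rather than $O(m)$) control on each martingale difference; the remaining quadratic-form concentration is routine.
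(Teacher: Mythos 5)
Your proof is correct, and it takes a genuinely different route from the paper. The paper proves the variance bound by expanding $\E(v'\widetilde A_n v)^2$ as the fourth-moment sum $\sum_{j_1,\dots,j_4}\E(v_{j_1}v_{j_2}v_{j_3}v_{j_4})\,\E(\widetilde A_{j_1 j_2}\widetilde A_{j_3 j_4})$, classifying the index patterns, and bounding each class via Sherman--Morrison/Woodbury resolvent identities; the resulting bounds on $\E\widetilde A_{jj}^2$, $\E\widetilde A_{j_1 j_2}^2$, and $\mathrm{Var}(\tr\widetilde A_n)$ all lean on Lemma~\ref{lem:6.9} (the Koltchinskii--Mendelson lower bound on $\sigma_{\min}(X_n)$) to control $\lambda_{\max}(B_j^{-1})$ and $\lambda_n$. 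Your decomposition via the law of total variance replaces all of this: the conditional-variance piece is dispatched by Lemma~\ref{lem:xAx-tr} and the deterministic fact $\tr\widetilde A_n^2\le n$ (eigenvalues of $\widetilde A_n$ lie in $[0,1)$), and the $\mathrm{Var}(\tfrac1m\tr\widetilde A_n)$ piece comes from a row-martingale with Cauchy interlacing giving the clean deterministic bound $|\tr\widetilde A_n - \tr\widetilde A_{nk}|\le 1$. This makes your argument essentially structural: you never need the smallest-singular-value concentration of Lemma~\ref{lem:6.9}, whereas the paper's proof does. Both approaches land on the same order $\beta_n^2/n$; the paper obtains a sharper $\beta_n^2/m$ for the $\tr\widetilde A_n$ contribution but this is dominated by the off-diagonal term in either case. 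Your interlacing step is indeed the crux, and it holds exactly as you sketch: with $f(\lambda)=\tfrac1m(\lambda+1/m)^{-1}\in[0,1]$ and $S_{nk}$ a principal submatrix of $S_n$, interlacing gives $0\le\tfrac1m[\tr(S_n+\tfrac1m I)^{-1}-\tr(S_{nk}+\tfrac1m I)^{-1}]\le 1$, hence $\tr\widetilde A_n-\tr\widetilde A_{nk}\in[0,1]$ and $|\gamma_k|\le 2$.
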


\begin{proof}[Proof of Lemma \ref{lem:6.8}]Let $SD$ denote standard deviation. By Lemma \ref{lem:6.10}, eventually,
\begin{align*} \phantom{\,.} \text{Pr} ( v^\top \widetilde A_n v \geq  \beta_n \log(n) ) & \leq \text{Pr} \big(v^\top \widetilde A_n v \geq \E(v^\top \widetilde A_n v) + \sqrt{n} \log(n) \cdot SD(v^\top \widetilde A_n v)\big)  \leq \frac{1}{n \log^2(n)} \, . 
 \end{align*}
Summability of the right-hand-side gives $v^\top \widetilde A_n v \leq \beta_n \log(n)$ almost surely eventually.

Let $\lambda_1 \geq \cdots \geq \lambda_n$ denote the eigenvalues of $S_n$ and $\Lambda = \text{diag}(\lambda_1, \ldots, \lambda_n)$. Observe that the eigenvalues of $\widetilde A_n = W \Lambda^{1/2} ( \Lambda + \frac{1}{m}I_n)^{-1} \Lambda^{1/2} W^\top$ are $\lambda_j/(\lambda_j + \frac{1}{m})$, $j = 1, \ldots, n$, joined by a zero eigenvalue with multiplicity $m - n$. A first consequence of Lemma \ref{lem:6.9} is $\lambda_n \gtrsim 1$ almost surely. Hence, 
\begin{align*} \phantom{\,.} v^\top A_n v  & \leq v^\top \widetilde A_n v + \|v\|_2^2 \lambda_{\max}(A_n - \widetilde A_n)  \leq v^\top \widetilde A_n v + \frac{\|v\|_2^2}{1 + m \lambda_n } \\
& \lesssim \beta_n \log(n) \, , 
\,\end{align*}
almost surely.
\end{proof}

\begin{proof}[Proof of Lemma \ref{lem:6.10}]
As $X_n$ and $v$ are independent, 
\begin{align}
    \E (v^\top \widetilde A_n v)^2 & = \sum_{j_1,j_2,j_3,j_4=1}^m \E(v_{j_1} v_{j_2} v_{j_3} v_{j_4}) \E ( \widetilde A_{j_1 j_2} \widetilde A_{j_3 j_4} ) \, .  \label{9876}
\end{align}
Any term $\E(v_{j_1} v_{j_2} v_{j_3} v_{j_4})$ containing a singleton index vanishes, leaving the following terms: 
\begin{align}
\begin{aligned}
j_1 = j_2 = j_3 = j_4  : \qquad & \frac{\nu_4}{m^2} \sum_{j}\E\widetilde A_{j j}^2 \, , \\
    j_1 =  j_2 , j_3 = j_4, j_1 \neq j_3 : \qquad & \frac{1}{m^2}  \sum_{j_1,j_3}  \E(\widetilde A_{j_1 j_1} \widetilde A_{j_3 j_3}) \,, \\
    j_1 = j_3 , j_2 = j_4, j_1 \neq j_2  : \qquad& \frac{1}{m^2} \sum_{j_1,j_2}   \E \widetilde A_{j_1 j_2}^2 \,, \\
    j_1 = j_4 , j_2 = j_3, j_1 \neq j_2 :\qquad & \frac{1}{m^2} \sum_{j_1,j_2}   \E\widetilde A_{j_1 j_2}^2 \,,
\end{aligned}
\end{align}
where $\nu_4 = m^2 \E v_1^4 < \infty$. Let $x_j$ denote the $j$-th column of $X_n$ and $B_j = S_n  +  \frac{1}{m} (I_n - x_j x_j^\top)$. By the Sherman-Morrison formula,
\begin{align*}
    \E  \widetilde A_{j j}^2 & = \frac{1}{m^2} \E \big( x_j^\top (S_n + \tfrac{1}{m} I_n)^{-1} x_j \big)^2 
    = \frac{1}{m^2} \E  \bigg( x_j^\top B_j^{-1} x_j  - \frac{(x_j^\top B_j^{-1} x_j  )^2}{m + x_j^\top B_j^{-1} x_j  } \bigg)^2 \\
    & = \frac{1}{m^2} \E \bigg( \frac{x_j^\top B_j^{-1} x_j}{1  + \tfrac{1}{m} x_j^\top B_j^{-1}x_j}  \bigg)^2 
     \leq \frac{1}{m^2} \E ( x_j^\top B_j^{-1} x_j )^2 \leq \frac{1}{m^2} \E\|x_j\|_2^4 \, \E \lambda_{\max}(B_j^{-1})^2 \, ,
\end{align*}
where the final inequality follows from the independence of $x_j$ and $B_j$. Lemma \ref{lem:6.9} and the almost-sure lower bound $\lambda_{\min}(B_j) \geq  1/m $ imply  $\E  \lambda_{\max}(B_j^{-1})^2 \lesssim 1$. Moreover, $\E \|x_j\|_2^4 = n \nu_4 + (n^2-n)\nu_2^2$. Thus,
\begin{align} \phantom{\,.}
     \E | \widetilde A_{j j}|^2  \lesssim \beta_n^2 \, . 
\end{align}
Next, consider terms of the third and fourth types. Let $B_{j_1,j_2} = S_n + \frac{1}{m}(I_n - x_{j_1} x_{j_1}^\top - x_{j_2} x_{j_2}^\top)$. By the Woodbury formula, 
\begin{align*}
    \E \widetilde A_{j_1,j_2}^2 &= \frac{1}{m^2} \E \big( x_{j_1}^\top (S_n + \tfrac{1}{m} I_n)^{-1} x_{j_2} \big)^2\\ 
    & = \frac{1}{m^2} \E \bigg(  \frac{x_{j_1}^\top B_{j_1,j_2}^{-1} x_{j_2}}{ (1 + \tfrac{1}{m}  x_{j_1}^\top B_{j_1,j_2}^{-1} x_{j_1}) ( 1 + \tfrac{1}{m}  x_{j_2}^\top B_{j_1,j_2}^{-1} x_{j_2}) - (\tfrac{1}{m}   x_{j_1}^\top B_{j_1,j_2}^{-1} x_{j_2})^2 } \bigg)^2  \\
    & \leq \frac{1}{m^2} \E (x_{j_1}^\top B_{j_1,j_2}^{-1} x_{j_2})^2 \leq \frac{1}{m^2} \E \langle x_{j_1}, x_{j_2} \rangle^2 \E \lambda_{\max}(B_{j_1,j_2}^{-1})^2 \,.
\end{align*}
The second-to-last inequality follows from the Cauchy-Schwarz inequality. The final step uses the independence of $B_{j_1,j_2}$ from  $x_{j_1}$ and $x_{j_2}$. We have that $\E \lambda_{\max}(B_{j_1,j_2}^{-1})^2 \lesssim 1$ and $ \E \langle x_{j_1}, x_{j_2} \rangle^2 = n$. Therefore, 
\begin{align} \phantom{\,.}  \E \widetilde A_{j_1,j_2}^2 \lesssim \frac{\beta_n}{m}                      \,.
\end{align}
As the non-zero eigenvalues of $\widetilde A_n = W \Lambda^{1/2} ( \Lambda + \frac{1}{m}I_n)^{-1} \Lambda^{1/2} W^\top$ are $\lambda_j/(\lambda_j + \frac{1}{m})$, $j \in \{1, \ldots, n\}$, 
\begingroup
\allowdisplaybreaks
\begin{align*}
    &\E ( v^\top \widetilde A_n v) = \frac{1}{m} \E \tr \widetilde A_n  = \frac{1}{m} \sum_{j=1}^n \E \Big( \frac{\lambda_j}{\lambda_j + \frac{1}{m}} \Big) \leq \beta_n \,, &   & \E | \tr \widetilde A_n - n |    \leq   \sum_{j=1}^n \E \Big| \frac{1}{1 + m \lambda_j} \Big|   \lesssim \beta_n \, , \\
    & \E \big| (\tr \widetilde A_n)^2 - n^2 \big|   \leq \sum_{j_1, j_2=1}^n  \E \Big| \frac{1 + m(\lambda_{j_1} + \lambda_{j_2} ) }{(1 + m \lambda_{j_1}) (1 + m \lambda_{j_2} ) }  \Big| \lesssim n \beta_n \,.
\end{align*}
\endgroup

We have used that by Lemma \ref{lem:6.9}, there exists $c > 0$ such that with high probability, $\lambda_n \geq c$. On this event, 
\begin{align*} \phantom{\,.}  \Big| \frac{1 + m(\lambda_{j_1} + \lambda_{j_2} ) }{(1 + m \lambda_{j_1}) (1 + m \lambda_{j_2} ) } \Big| & = \frac{ \lambda_{j_1}^{-1} \lambda_{j_2}^{-1} + m(\lambda_{j_1}^{-1} + \lambda_{j_2}^{-1})  }{\lambda_{j_1}^{-1} \lambda_{j_2}^{-1} + m(\lambda_{j_1}^{-1} + \lambda_{j_2}^{-1})  + m^2} \leq \frac{c^{-2} + 2c^{-1} m}{c^{-2} + 2c^{-1} m + m^2} \,. \end{align*}
Thus, 
\begin{align}
      \bigg| \frac{1}{m^2}  \sum_{j_1,j_3}  \E(\widetilde A_{j_1 j_1} \widetilde A_{j_3 j_3})  - ( \E v^\top \widetilde A_n v )^2 \bigg| &  = \frac{Var(\tr \widetilde A_n)}{m^2} \nonumber \\ & \leq  \frac{1}{m^2} | \E (\tr \widetilde A_n)^2 - n^2| +  \frac{1}{m^2} | ( \E \tr \widetilde A_n)^2 - n^2|  \lesssim \frac{\beta_n^2}{m} \, . \label{98765}
\end{align}
$Var(v^\top \widetilde A_n v) \lesssim n^{-1} \beta_n^2$ follows from (\ref{9876}) - (\ref{98765}). 
\end{proof}

\end{document}